\theoremstyle{plain}
\newtheorem{theorem}[subsection]{Theorem}
\newtheorem{lemma}[subsection]{Lemma}
\newtheorem{proposition}[subsection]{Proposition}
\newtheorem{corollary}[subsection]{Corollary}
\theoremstyle{definition}
\newtheorem{definition}[subsection]{Definition}
\newtheorem{example}[subsection]{Example}
\newtheorem{examples}[subsection]{Examples}
\newtheorem{remark}[subsection]{Remark}
\newenvironment{tfae}
{
\begin{enumerate}}
{\end{enumerate}}
\newcommand{\links}{\lgroup}
\newcommand{\rechts}{\rgroup}
\newcommand{\defn}{\textbf}
\newcommand{\noproof}{\hfill \qed}
\renewcommand{\square}{\raisebox{.4mm}{\,\ensuremath{\boxvoid}}}
\DeclareMathOperator{\cod}{cod} 
\DeclareMathOperator{\End}{End} 
\DeclareMathOperator{\Eq}{Eq}
\newcommand{\C}{\ensuremath{\mathbb{C}}}
\newcommand{\N}{\ensuremath{\mathbb{N}}}
\newcommand{\s}{\ensuremath{\mathcal{S}}}
\newcommand{\p}{\ensuremath{\mathcal{P}}}
\newcommand{\m}{\ensuremath{\mathcal{M}}}
\newcommand{\Ab}{\ensuremath{\mathsf{Ab}}}
\newcommand{\Cat}{\ensuremath{\mathsf{Cat}}}
\newcommand{\CMon}{\ensuremath{\mathsf{CMon}}}
\newcommand{\GMon}{\ensuremath{\mathsf{GMon}}}
\newcommand{\Gp}{\ensuremath{\mathsf{Gp}}}
\newcommand{\Lie}{\ensuremath{\mathsf{Lie}}}
\newcommand{\Mon}{\ensuremath{\mathsf{Mon}}}
\newcommand{\Pt}{\ensuremath{\mathsf{Pt}}}
\newcommand{\Rng}{\ensuremath{\mathsf{Rng}}}
\newcommand{\SRng}{\ensuremath{\mathsf{SRng}}}
\newcommand{\Sub}{\ensuremath{\mathsf{Sub}}}
\newcommand{\M}{\ensuremath{\mathsf{M}}}
\renewcommand{\P}{\ensuremath{\mathsf{P}}}
\renewcommand{\S}{\ensuremath{\mathsf{S}}}
\newcommand{\SU}{\ensuremath{\mathsf{SU}}}
\newcommand{\U}{\ensuremath{\mathsf{U}}}
\def\pullback{
 \ar@{-}[]+R+<6pt,-1pt>;[]+RD+<6pt,-6pt>%
 \ar@{-}[]+D+<1pt,-6pt>;[]+RD+<6pt,-6pt>}
\def\halfsplitpullback{%
 \ar@{-}[]+R+<6pt,-.5ex>;[]+RD+<6pt,-6pt>%
 \ar@{-}[]+D+<1pt,-6pt>;[]+RD+<6pt,-6pt>}
\def\ophalfsplitpullback{%
 \ar@{-}[]+R+<6pt,-1pt>;[]+RD+<6pt,-6pt>%
 \ar@{-}[]+D+<.5ex,-6pt>;[]+RD+<6pt,-6pt>}
\def\splitsplitpullback{%
 \ar@{-}[]+R+<6pt,-.5ex>;[]+RD+<6pt,-6pt>%
 \ar@{-}[]+D+<.5ex,-6pt>;[]+RD+<6pt,-6pt>}
\def\cubepullback{%
 \ar@{-}[]+RD+<6pt,-9pt>;[]+RDD+<6pt,-20pt>%
 \ar@{-}[]+D+<.5ex,-8pt>;[]+RDD+<6pt,-20pt>}
\def\bottomcubepullback{%
 \ar@{-}[]+R+<7pt,0pt>;[]+RD+<18pt,-6pt>%
 \ar@{-}[]+D+<12pt,-6pt>;[]+RDD+<18pt,-6pt>}
\begin{document}

\title[Two characterisations of groups amongst monoids]{Two characterisations\\ of groups amongst monoids}

\author{Andrea Montoli}
\address[Andrea Montoli]{Dipartimento di Matematica ``Federigo Enriques'', Universit\`{a} degli Studi di Milano, Via
Saldini 50, 20133 Milano, Italy\newline and\newline CMUC,
Department of Mathematics, University of Coimbra, 3001--501
Coimbra, Portugal}
\thanks{The first author acknowledges support by the Programma per Giovani Ricercatori ``Rita Levi Montalcini'', funded by the Italian government through MIUR}
\email{andrea.montoli@unimi.it}

\author{Diana Rodelo}
\address[Diana Rodelo]{CMUC, Department of Mathematics, University of Coimbra,
3001--501 Coimbra, Portugal\newline and\newline Departamento de
Matem\'atica, Faculdade de Ci\^{e}ncias e Tecnologia, Universidade
do Algarve, Campus de Gambelas, 8005--139 Faro, Portugal}
\thanks{The first two authors acknowledge partial financial assistance by Centro de Matem\'{a}tica da
Universidade de Coimbra---UID/MAT/00324/2013, funded by the
Portuguese Government through FCT/MCTES and co-funded by the
European Regional Development Fund through the Partnership
Agreement PT2020.} \email{drodelo@ualg.pt}

\author{Tim Van~der Linden}
\address[Tim Van~der Linden]{Institut de
Recherche en Math\'ematique et Physique, Universit\'e catholique
de Louvain, che\-min du cyclotron~2 bte~L7.01.02, B--1348
Louvain-la-Neuve, Belgium}
\thanks{The third author is a Research
Associate of the Fonds de la Recherche Scientifique--FNRS}
\email{tim.vanderlinden@uclouvain.be}

\keywords{Fibration of points; strongly epimorphic pair;
(strongly) unital, subtractive, Mal'tsev, protomodular category}

\subjclass[2010]{18D35, 20J15, 18E99, 03C05, 08C05}

\date{\today}

\begin{abstract}
The aim of this paper is to solve a problem proposed by Dominique
Bourn: to provide a categorical-algebraic characterisation of
groups amongst monoids and of rings amongst semirings. In the case
of monoids, our solution is given by the following equivalent
conditions:
\begin{tfae}
\item $G$ is a group; \item $G$ is a \emph{Mal'tsev object}, i.e.,
the category $\Pt_{G}(\Mon)$ of points over $G$ in the category of
monoids is unital; \item $G$ is a \emph{protomodular object},
i.e., all points over $G$ are \emph{stably strong}, which means
that any pullback of such a point along a morphism of monoids
${Y\to G}$ determines a split extension
\[
\xymatrix{ 0 \ar[r] & K \ar@{{ |>}->}[r]^-{k} & X \ar@{-{
>>}}@<-.5ex>[r]_-f & Y \ar@{{ >}->}@<-.5ex>[l]_-s \ar[r] & 0 }
\]
in which $k$ and $s$ are jointly strongly epimorphic.
\end{tfae}
We similarly characterise rings in the category of semirings.

On the way we develop a \emph{local} or \emph{object-wise}
approach to certain important conditions occurring in categorical
algebra. This leads to a basic theory involving what we call
\emph{unital} and \emph{strongly unital} objects,
\emph{subtractive} objects, \emph{Mal'tsev} objects and
\emph{protomodular} objects. We explore some of the connections
between these new notions and give examples and counterexamples.
\end{abstract}

\maketitle

\section{Introduction}
The concept of \emph{abelian object} plays a key role in
categorical algebra. In the study of categories of non-abelian
algebraic structures---such as groups, Lie algebras, loops, rings,
crossed modules, etc.---the ``abelian case'' is usually seen as a
basic starting point, often simpler than the general case, or
sometimes even trivial. Most likely there are known results which
may or may not be extended to the surrounding non-abelian setting.
Part of categorical algebra deals with such generalisation issues,
which tend to become more interesting precisely where this
extension is not straightforward. Abstract commutator theory for
instance, which is about \emph{measuring non-abelianness}, would
not exist without a formal interplay between the abelian and the
non-abelian worlds, enabled by an accurate definition of
abelianness.

Depending on the context, several approaches to such a
conceptualisation exist. Relevant to us are those considered
in~\cite{Borceux-Bourn}; see also~\cite{Huq, Smith, Pedicchio} and
the references in~\cite{Borceux-Bourn}. The easiest is probably to
say that an \defn{abelian object} is an object which admits an
internal abelian group structure. This makes sense as soon as the
surrounding category is \emph{unital}---a condition introduced
in~\cite{Bourn1996}, see below for details---which is a rather
weak additional requirement on a pointed category implying that an
object admits at most one internal abelian group structure. So
that, in this context, ``being abelian'' becomes a property of the
object in question.

The full subcategory of a unital category $\C $ determined by the
abelian objects is denoted $\Ab(\C) $ and called the
\defn{additive core} of $\C $. The category $\Ab(\C) $ is indeed
additive, and if $\C $ is a finitely cocomplete
regular~\cite{Barr} unital category, then~$\Ab(\C) $ is a
reflective~\cite{Borceux-Bourn} subcategory of $\C $. If $\C$ is
moreover Barr exact~\cite{Barr}, then~$\Ab(\C) $ is an abelian
category, and called the \defn{abelian core} of $\C $.

For instance, in the category $\Lie_{K}$ of Lie algebras over a
field $K$, the abelian objects are $K$-vector spaces, equipped
with a trivial (zero) bracket; in the category $\Gp$ of groups,
the abelian objects are the abelian groups, so that
$\Ab(\Gp)=\Ab$; in the category $\Mon$ of monoids, the abelian
objects are abelian groups as well: $\Ab(\Mon)=\Ab$; etc. In all
cases the resulting commutator theory behaves as expected.

\subsection*{Beyond abelianness: weaker conditions}
The concept of an abelian object has been well studied and
understood. For certain applications, however, it is too strong:
the ``abelian case'' may not just be \emph{simple}, it may be
\emph{too simple}. Furthermore, abelianness may ``happen too
easily''. As explained in~\cite{Borceux-Bourn}, the
Eckmann--Hilton argument implies that any internal monoid in a
unital category is automatically a \emph{commutative} object. For
instance, in the category of monoids any internal monoid is
commutative, so that in particular an internal group is always
abelian: $\Gp(\Mon)=\Ab$. Amongst other things, this fact is well
known to account for the abelianness of the higher homotopy
groups.

If we want to capture groups amongst monoids, avoiding abelianness
turns out to be especially difficult. One possibility would be to
consider gregarious objects~\cite{Borceux-Bourn}, because the
``equation''
\begin{center}
commutative + gregarious = abelian
\end{center}
holds in any unital category. But this notion happens to be too
weak, since examples were found of gregarious monoids which are
not groups. On the other hand, as explained above, the concept of
an internal group is too strong, since it gives us abelian groups.
Whence the subject of our present paper: to find out how to
\begin{center}
characterise \emph{non-abelian} groups inside the category of
monoids
\end{center}
in categorical-algebraic terms. That is to say, is there some
weaker concept than that of an abelian object which, when
considered in $\Mon$, gives the category~$\Gp$?

This question took quite a long time to be answered. As explained
in~\cite{SchreierBook, BM-FMS2}, the study of monoid actions,
where an \defn{action} of a monoid $B$ on a monoid $X$ is a monoid
homomorphism $B \to \End(X)$ from~$B$ to the monoid of
endomorphisms of~$X$, provided a first solution to this problem: a
monoid $B$ is a group if and only if all split epimorphisms with
codomain~$B$ correspond to monoid actions of~$B$. However, this
solution is not entirely satisfactory, since it makes use of
features which are typical for the category of monoids, and thus
cannot be exported to other categories.

Another approach to this particular question is to consider the
concept of $\s$-protomodularity~\cite{SchreierBook, S-proto,
Bourn2014}, which allows to single out a
protomodular~\cite{Bourn1991} subcategory $\s(\C)$ of a given
category~$\C$, depending on the choice of a convenient class $\s$
of points in $\C$---see below for details. Unlike the category of
monoids, the category of groups is protomodular. And indeed, when
$\C=\Mon$, the class $\s$ of so-called \emph{Schreier
points}~\cite{BM-FMS} does characterise groups in the sense that
$\s(\Mon)=\Gp$. A~similar characterisation is obtained through the
notion of $\s$-Mal'tsev categories~\cite{Bourn2014}. However, these
characterisations are ``relative'', in the sense that they depend on
the choice of a class $\s$. Moreover, the definition of the class
$\s$ of Schreier points is ad-hoc, given that it again crucially
depends on $\C$ being the category of monoids. So the problem is
somehow shifted to another level.

The approach proposed in our present paper is different because it
is \emph{local} and \emph{absolute}, rather than \emph{global} and
\emph{relative}. ``Local'' here means that we consider conditions
defined object by object: \emph{protomodular} objects,
\emph{Mal'tsev} objects, \emph{(strongly) unital} objects and
\emph{subtractive} objects. While $\s$-protomodularity deals with
the protomodular subcategory $\s(\C)$ as a whole. ``Absolute''
means that there is no class $\s$ for the definitions to depend
on.

More precisely, we show in Theorem~\ref{groups = protomodular
monoids} that the notions of a protomodular object and a Mal'tsev
object give the desired characterisation of groups amongst
monoids---whence the title of our paper. Moreover, we find
suitable classes of points which allow us to establish the link
between our absolute approach and the relative approach of
$\s$-protomodularity and the $\s$-Mal'tsev condition
(Proposition~\ref{proto objs=proto core} and
Proposition~\ref{Mal'tsev objs = Mal'tsev core}).

The following table gives an overview of the classes of objects we
consider, and what they amount to in the category of monoids
$\Mon$ and in the category of semi\-rings $\SRng$. Here $\GMon$
denotes the category of gregarious monoids mentioned above.

\begin{table}[h!]
\caption{Special objects in the categories $\Mon$ and $\SRng$}
\begin{tabular}{cccccc}
\toprule \txt{all\\ objects} & \txt{unital\\ objects} &
\txt{subtractive\\ objects} & \txt{strongly unital\\ objects} &
\txt{Mal'tsev\\ objects} & \txt{protomodular\\objects}\\\midrule
$\C$ & $\U(\C)$ & $\S(\C)$ & $\SU(\C)$ & $\M(\C)$ & $\P(\C)$
\\\midrule
$\Mon$ & $\Mon$ & $\GMon$ & $\GMon$ & $\Gp$ & $\Gp$\\
$\SRng$ & $\SRng$ & $\Rng$ & $\Rng$ & $\Rng$ & $\Rng$\\
\bottomrule
\end{tabular}
\label{overview}
\end{table}

In function of the category $\C$ it is possible to separate all
classes of special objects occurring in Table~\ref{overview}.
Indeed, a given category is unital, say, precisely when all of its
objects are unital; while there exist examples of unital
categories which are not subtractive, Mal'tsev categories which
are not protomodular, and so on.

The present paper is the starting point of an exploration of this
new object-wise approach, which is being further developed in
ongoing work. For instance, the article~\cite{GM-ACS} provides a
simple direct proof of a result which implies our
Theorem~\ref{groups = protomodular monoids}, and in~\cite{GM-VdL1}
cocommutative Hopf algebras over an algebraically closed field are
characterised as the protomodular objects in the category of
cocommutative bialgebras.

\subsection*{Example: protomodular objects}
Let us, as an example of the kind of techniques we use, briefly
sketch the definition of a protomodular object. Given an object
$B$, a \defn{point over $B$} is a pair of morphisms $(f\colon
{A\to B},s\colon{B\to A})$ such that $fs=1_{B}$. A~category with
finite limits is said to be
\defn{protomodular}~\cite{Bourn1991,Borceux-Bourn} when for every
pullback
\[
\vcenter{\xymatrix@!0@=5em{ C\times_{B}A \ophalfsplitpullback
\ar[r]^-{\pi_A} \ar@<-.5ex>[d]_-{\pi_C} & A \ar@<-.5ex>[d]_-f
 \\
C \ar@<-.5ex>[u] \ar[r]_-g & B \ar@<-.5ex>[u]_-s }}
\]
of a point $(f,s)$ over $B$ along some morphism $g$ with codomain
$B$, the morphisms~$\pi_A$ and $s$ are \defn{jointly strongly
epimorphic}: they do not both factor through a given proper
subobject of $A$. In a pointed context, this condition is
equivalent to the validity of the \emph{split short five
lemma}~\cite{Bourn1991}. This observation gave rise to the notion
of a \defn{semi-abelian} category---a pointed, Barr exact,
protomodular category with finite
coproducts~\cite{Janelidze-Marki-Tholen}---which plays a
fundamental role in the development of a categorical-algebraic
approach to homological algebra for non-abelian structures; see
for
instance~\cite{Bourn-Janelidze:Torsors,EGVdL,Butterflies,CMM1,RVdL2}.

A point $(f,s)$ satisfying the condition mentioned above (that
$\pi_A$ and $s$ are jointly strongly epimorphic) is called a
\defn{strong point}. When also all of its pullbacks satisfy this
condition, it is called a \defn{stably strong point}. We shall say
that $B$ is a \defn{protomodular object} when all points over $B$
 are stably strong points. Writing $\P(\C)$ for the
full subcategory of $\C$ determined by the protomodular objects,
we clearly have that $\P(\C)=\C$ if and only if $\C$ is a
protomodular category. In fact, $\P(\C)$ is \emph{always} a
protomodular category, as soon as it is closed under finite limits
in~$\C$. We study some of its basic properties in
Section~\ref{Protomodular objects}, where we also prove one of our
main results: if $\C$ is the category of monoids, then $\P(\C)$ is
the category of groups (Theorem~\ref{groups = protomodular
monoids}). This is one of two answers to the question we set out
to study, the other being a characterisation of groups amongst
monoids as the so-called \emph{Mal'tsev objects} (essentially
Theorem~\ref{Mal'tsev monoids are groups}).

\subsection*{Structure of the text}
Since the concept of a (stably) strong point plays a key role in
our work, we recall its definition and discuss some of its basic
properties in Section~\ref{SSP}. Section~\ref{section S-Mal'tsev
and S-protomodular} recalls the definitions of $\s$-Mal'tsev and
$\s$-protomodular categories in full detail.

In Section~\ref{SUO} we introduce the concept of \emph{strongly
unital} object. We show that these coincide with the
\emph{gregarious} objects when the surrounding category is
regular. We prove stability properties and characterise rings
amongst semirings as the strongly unital objects
(Theorem~\ref{SU(SRng)=Rng}).

Section~\ref{USO} is devoted to the concepts of \emph{unital} and
\emph{subtractive} object. Our main result here is
Proposition~\ref{SU=SU} which, mimicking Proposition~3
in~\cite{ZJanelidze-Subtractive}, says that an object of a pointed
regular category is strongly unital if and only if it is unital
and subtractive.

In Section~\ref{MO} we introduce \emph{Mal'tsev} objects and prove
that any Mal'tsev object in the category of monoids is a group
(Theorem~\ref{Mal'tsev monoids are groups}).
Section~\ref{Protomodular objects} treats the concept of a
\emph{protomodular} object. Here we prove our paper's main result,
Theorem~\ref{groups = protomodular monoids}: a monoid is a group
if and only if it is a protomodular object, and if and only if it
is a Mal'tsev object. We also explain in which sense the full
subcategory determined by the protomodular objects is a
protomodular core~\cite{S-proto}.

\section{Stably strong points}\label{SSP}

We start by recalling some notions that occur frequently in
categorical algebra, focusing on the concept of a \emph{strong
point}.

\subsection{Jointly strongly epimorphic pairs}
A cospan $(r\colon C\to A, s\colon B\to A)$ in a category $\C$ is
said to be \defn{jointly extremally epimorphic} when it does not
factor through a monomorphism, which means that for any
commutative diagram where $m$ is a monomorphism
\[
\xymatrix@!0@=4em{ & M \ar@{ >->}[d]^- m \\
 C \ar[r]_-r \ar[ur] & A & B, \ar[l]^-s \ar[ul]}
\]
the monomorphism $m$ is necessarily an isomorphism. If $\C$ is
finitely complete, then it is easy to see that the pair $(r,s)$ is
jointly epimorphic. In fact, in a finitely complete category the
notions of extremal epimorphism and strong epimorphism coincide.
Therefore, we usually refer to the pair $(r,s)$ as being
\defn{jointly strongly epimorphic}. Recall that, if $\C$ is
moreover a regular category~\cite{Barr}, then extremal
epimorphisms and strong epimorphisms coincide with the regular
epimorphisms.

\subsection{The fibration of points}
A \defn{point} $(f\colon{A\to B},s\colon{B\to A})$ in $\C$ is a
split epimorphism $f$ with a chosen splitting $s$. Considering a
point as a diagram in~$\C$, we obtain the category of points in
$\C$, denoted $\Pt(\C) $: morphisms between points are pairs $(x,
y) \colon(f,s)\to (f',s')$ of morphisms in $\C$ making the diagram
\[
\xymatrix@!0@=4em{B \ar[r]^-{s} \ar[d]_y & A \ar[r]^-{f} \ar[d]^-{x} & B \ar[d]^y \\
B' \ar[r]_{s'} & A' \ar[r]_-{f'} & B'}
\]
commute. If $\C$ has pullbacks of split epimorphisms, then the
forgetful functor $\cod \colon {\Pt(\C) \to \C}$, which associates
with every split epimorphism its codomain, is a fibration, usually
called the
\defn{fibration of points}~\cite{Bourn1991}. Given an object $B$
of~$\C$, we denote the fibre over $B$ by $\Pt_B(\C)$. An object in
this category is a point with codomain $B$, and a morphism is of
the form~$(x, 1_B)$.

\subsection{Strong points}
We now assume $\C$ to be a finitely complete category.

\begin{definition}
We say that a point $(f\colon{A\to B},s\colon{B\to A})$ is a
\defn{strong point} when for every pullback
\begin{equation}
\label{strong point diagram} \vcenter{\xymatrix@!0@=5em{
C\times_{B}A \ophalfsplitpullback \ar[r]^-{\pi_A}
\ar@<-.5ex>[d]_-{\pi_C} & A \ar@<-.5ex>[d]_-f
 \\
C \ar@<-.5ex>[u]_(.4){\langle 1_{C}, sg \rangle} \ar[r]_-g & B
\ar@<-.5ex>[u]_-s }}
\end{equation}
along any morphism $g \colon {C \to B}$, the pair $(\pi_A, s)$ is
jointly strongly epimorphic.
\end{definition}

Strong points were already considered
in~\cite{MartinsMontoliSobral2}, under the name of \emph{regular
points} (in a regular context), and independently
in~\cite{Bourn-monad}, under the name of \emph{strongly split
epimorphisms}.

Many algebraic categories have been characterised in terms of
properties of strong points (see~\cite{Bourn1996, Borceux-Bourn}),
some of which we recall throughout the text. For instance, by
definition, a finitely complete category is
\defn{protomodular}~\cite{Bourn1991} precisely when all points in
it are strong. For a pointed category, this condition is
equivalent to the validity of the split short five
lemma~\cite{Bourn1991}. Examples of protomodular categories are
the categories of groups, of rings, of Lie algebras (over a
commutative ring with unit) and, more generally, every
\emph{variety of $\Omega$-groups} in the sense of Higgins
\cite{Higgins}. Protomodularity is also a key ingredient in the
definition of a \emph{semi-abelian
category}~\cite{Janelidze-Marki-Tholen}.

On the other hand, in the category of sets, a point $(f,s)$ is
strong if and only if $f$ is an isomorphism. To see this, it
suffices to pull it back along the unique morphism from the empty
set $\varnothing$.

\subsection{Pointed categories}
In a pointed category, we denote the kernel of a morphism $f$ by
$\ker(f)$. In the pointed case, the notion of strong point
mentioned above coincides with the one considered
in~\cite{MRVdL4}:

\begin{proposition}
Let $\C$ be a pointed finitely complete category.
\begin{enumerate}
\item A point $(f,s)$ in $\C$ is strong if and only if the pair
$(\ker (f),s)$ is jointly strongly epimorphic. \item Any split
epimorphism $f$ in a strong point $(f,s)$ is a normal epimorphism.
\end{enumerate}
\end{proposition}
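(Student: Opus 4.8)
The plan is to prove the two parts separately, using in both the fact that in a pointed category the kernel of $f$ arises as the pullback of the point $(f,s)$ along the unique morphism $0\to B$ out of the zero object. For part~(1), write $k\colon K\to A$ for $\ker(f)$. The implication from left to right is immediate: if $(f,s)$ is strong, then pulling the point back along $0\to B$ produces a pullback with $C\times_B A\cong K$ and $\pi_A=k$, so the defining condition applied to this single pullback says exactly that $(k,s)$ is jointly strongly epimorphic. For the converse, assume $(k,s)$ is jointly strongly epimorphic and consider an arbitrary pullback of $(f,s)$ along some $g\colon C\to B$, with projections $\pi_C$ and $\pi_A$. The key observation is that $k$ factors through $\pi_A$: since $g0=0=fk$, the universal property of the pullback induces a morphism $\langle 0,k\rangle\colon K\to C\times_B A$ with $\pi_A\langle 0,k\rangle=k$. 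Consequently, if a monomorphism $m\colon M\to A$ is such that both $\pi_A$ and $s$ factor through it, then so does $k=\pi_A\langle 0,k\rangle$; as $(k,s)$ is jointly strongly epimorphic, $m$ is an isomorphism. Hence $(\pi_A,s)$ is jointly strongly epimorphic for every pullback, which is exactly the condition for $(f,s)$ to be strong.

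For part~(2), I first recall that a split epimorphism is always a (regular) epimorphism, being the coequalizer $\coeq(sf,1_A)$: indeed $f\cdot sf=f=f\cdot 1_A$, and any $h$ with $hsf=h$ factors through $f$ via $hs$, uniquely since $f$ is epic. It therefore suffices to show that, for $h\colon A\to D$, the condition $hk=0$ is equivalent to $hsf=h$; this identifies the morphisms annihilating $k=\ker(f)$ with the morphisms factoring through $f$, exhibiting $f$ as the cokernel of its kernel, that is, as a normal epimorphism. One implication is formal: if $hsf=h$ then $hk=hsfk=hs(fk)=0$. For the other, assume $hk=0$ and compare $hsf$ with $h$ via their equalizer $e\colon E\to A$, which is a monomorphism. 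The section $s$ factors through $e$ because $fs=1_B$ yields $hsfs=hs$, and $k$ factors through $e$ because $hsfk=hs(fk)=0=hk$. Since $(k,s)$ is jointly strongly epimorphic by part~(1), the monomorphism $e$ is an isomorphism, whence $hsf=h$.

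The genuinely substantive step is the converse in part~(1): everything turns on recognising that the kernel $k$ factors through the comparison morphism $\pi_A$ of \emph{any} pullback, so that joint strong epimorphy of $(k,s)$ transfers to $(\pi_A,s)$. Once this factorisation is in hand, both statements reduce to short diagram chases. The only delicate point in part~(2) is the use of split epimorphisms as absolute coequalizers, which is what converts the normality of $f$ into the single equation $hsf=h$, subsequently dispatched by the equalizer of the pair $(hsf,h)$.
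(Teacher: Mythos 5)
Your proposal is correct and follows essentially the same route as the paper: part (1) is the paper's argument verbatim (pulling back along $0\to B$ for one direction, and the factorisation $\ker(f)=\pi_A\langle 0,\ker(f)\rangle$ for the converse), and part (2) realises the paper's claim that $f=\mathrm{coker}(\ker(f))$ ``easily follows'' from $(\ker(f),s)$ being jointly (strongly) epimorphic. Your equaliser argument for $hk=0\Rightarrow hsf=h$ is precisely the standard derivation of joint epimorphy from joint strong epimorphy, inlined at the point where the paper invokes it.
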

\begin{proof}
(1) If $(f,s)$ is a strong point, then $(\ker (f),s)$ is jointly
strongly epimorphic: to see this, it suffices to take the pullback
of $f$ along the unique morphism with domain the zero object.
Conversely, if we take an arbitrary pullback as in~\eqref{strong
point diagram}, then $\ker (f)=\pi_A \langle 0, \ker (f) \rangle$.
We conclude that $(\pi_A,s)$ is jointly strongly epimorphic
because $(\ker (f),s)$ is.

(2) Since $(f,s)$ is a strong point, the pair $(\ker(f),s)$ is
jointly strongly epimorphic; thus it is jointly epimorphic. It
easily follows that~$f$ is the cokernel of its kernel~$\ker(f)$.
\end{proof}

In a pointed finitely complete context, asking that certain
product projections are strong points gives rise to the notions of
a unital and of a strongly unital category. In fact, when for all
objects $X$, $Y$ in $\C$ the point
\[
 (\pi_{X}\colon {X\times Y\to X},\quad \langle 1_{X},0 \rangle\colon X\to X\times Y)
\]
is strong, $\C$ is said to be a \defn{unital}
category~\cite{Bourn1996}. The category $\C$ is called
\defn{strongly unital} (\cite{Bourn1996}, see also Definition 1.8.3 and Theorem 1.8.15 in \cite{Borceux-Bourn}) when for every object~$X$ in $\C$
the point
\[
 (\pi_{1}\colon {X\times X\to X},\quad \Delta_{X}=\langle 1_{X},1_{X}\rangle\colon X\to X\times X)
\]
is strong. Observe that we could equivalently ask the point
$(\pi_2, \Delta_X)$ to be strong. It is well known that every
strongly unital category is necessarily unital~\cite[Proposition
1.8.4]{Borceux-Bourn}.

\begin{example}\label{Examples unital}
As shown in~\cite[Theorem 1.2.15]{Borceux-Bourn}, a variety in the
sense of universal algebra is a unital category if and only if it
is a \defn{J\'{o}nsson--Tarski variety}. This means that the
corresponding theory contains a unique constant~$0$ and a binary
operation $+$ subject to the equations $0 + x = x= x + 0$.
\end{example}

In particular, the categories of monoids and of semirings are
unital. Moreover, every pointed protomodular category is strongly
unital.

\subsection{Stably strong points}
We are especially interested in those points for which the
property of being strong is pullback-stable.

\begin{definition}
We say that a point $(f,s)$ is
\defn{stably strong} if every pullback of it along any morphism is
a strong point. More explicitly, for any morphism~$g$, the point
$(\pi_C, \langle 1_C, sg \rangle)$ in Diagram~\eqref{strong point
diagram} is strong.
\end{definition}

Note that a stably strong point is always strong (it suffices to
pull it back along the identity morphism) and that the collection
of stably strong points determines a subfibration of the fibration
of points. In a protomodular category, \emph{all} points are
stably strong (since all points are strong). In the category of
sets, all strong points are stably strong (since isomorphisms are
preserved by pullbacks). Nevertheless, in a finitely complete
category not all strong points are stably strong as can be seen in
the following examples.

\begin{example}
Let $\C$ be any pointed non-unital category. (For instance, the
category of Hopf algebras over a field is such~\cite{GM-VdL1}.)
Necessarily then, certain product inclusions are not jointly
strongly epimorphic. Let $(\pi_{X},\langle 1_{X},0 \rangle)\colon
{X\times Y\leftrightarrows X}$ be a product projection which is
not a strong point. It is a pullback of the point $Y
\leftrightarrows 0$, which is obviously strong---but not stably
strong.
\end{example}

\begin{example}\label{only strong}
 A variety of universal algebras is said to be \defn{subtractive}~\cite{Ursini3} when the corresponding
 theory contains a unique constant $0$ and a binary operation $s$, called a \defn{subtraction}, subject to the
 equations $s(x,0) = x$ and $s(x,x) = 0$. We write $\Sub$ for the subtractive variety of \defn{subtraction algebras}, which are
 triples $(X,s,0)$ where $X$ is a set, $s$ a subtraction on $X$ and $0$ the corresponding constant.

Let $T$ be the subtraction algebra
$$
\begin{array}{c|cc}
 s & 0 & a \\
 \hline
 0 & 0 & 0 \\
 a & a & 0
\end{array}
$$
Then $(\pi_1, \Delta_T)\colon {T\times T\leftrightarrows T}$ is a
strong point, since $(\langle 0,1_{T}\rangle, \Delta_T)$ is a
jointly strongly epimorphic pair of arrows. Indeed,
$(a,0)=(s(a,0),s(a,a))=s((a,a),(0,a))$.

Let $X$ be the subtraction algebra
$$
\begin{array}{c|ccc}
 s & 0 & u & v \\
 \hline
 0 & 0 & 0 & 0\\
 u & u & 0 & 0 \\
 v & v & 0 & 0
\end{array}
$$
and consider the constant map $f\colon X\to T\colon x\mapsto 0$.
The pullback of the point $(\pi_1, \Delta_T)\colon {T\times
T\leftrightarrows T}$ along $f$ gives the point $(\pi_X,\langle
1_X,0\rangle)\colon {X\times T\leftrightarrows X}$.

It is easy to see that this point is not strong: the only way the
pair $(u,a)\in X\times T$ can be written as a difference is
$(u,a)=(s(u,0),s(a,0))=s((u,a),(0,0))$. Alternatively, we can
consider the subalgebra $M=\{(0,0), (0,a),(u,0),(v,0)\}$ of the
product~${X\times T}$. $M$ is strictly smaller than $X\times T$,
since it does not contain the element~$(u,a)$. Note that the
restriction of the subtraction on $X\times T$ to $M$ is given by
$$
\begin{array}{c|cccc}
 s & (0,0) & (0,a) & (u,0) & (v,0) \\
 \hline
 (0,0) & (0,0) & (0,0) & (0,0) & (0,0) \\
 (0,a) & (0,a) & (0,0) & (0,a) & (0,a) \\
 (u,0) & (u,0) & (u,0) & (0,0) & (0,0) \\
 (v,0) & (v,0) & (v,0) & (0,0) & (0,0)
\end{array}
$$
so it does indeed define an operation on $M$. On the other hand,
the two product inclusions $\langle1_{X},0\rangle$ and $\langle
0,1_{T}\rangle$ do factor through $M$.

This allows us to conclude that the point $(\pi_1, \Delta_T)\colon
{T\times T\leftrightarrows T}$ is not stably strong.
\end{example}

\subsection{The regular case}
In the context of regular categories~\cite{Barr}, (stably) strong
points are
\defn{closed under quotients}: this means that in any commutative
diagram
\[
\xymatrix@!0@=4em{
A \ar@<-.5ex>[d]_f \ar@{->>}[r]^\alpha & A' \ar@<-.5ex>[d]_{f'} \\
B \ar@<-.5ex>[u]_s \ar@{->>}[r]_\beta & B', \ar@<-.5ex>[u]_{s'} }
\]
where $\alpha$ and $\beta $ are regular epimorphisms and $(f,s)$
is (stably) strong, also $(f',s')$ is (stably) strong.

\begin{proposition} \label{stably strong points closed under quotients}
In a finitely complete category, strong points are closed under
quotients and stably strong points are closed under retractions.
In a regular category, stably strong points are closed under
quotients.
\end{proposition}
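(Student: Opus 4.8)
The plan is to prove three separate closure assertions, building on
Proposition~\ref{stably strong points closed under quotients}'s statement
itself and the basic stability remarks already recorded. Let me set up
notation first. Suppose we have a commutative square of points with
horizontal comparison morphisms $\alpha\colon A\to A'$ and
$\beta\colon B\to B'$, i.e. $f'\alpha=\beta f$ and $\alpha s=s'\beta$.

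\textbf{Strong points are closed under quotients.}
First I would treat the strong case. Assume $(f,s)$ is strong and both
$\alpha$ and $\beta$ are regular epimorphisms. To show $(f',s')$ is strong,
I need to check that every pullback of it is jointly strongly epimorphic
with its splitting. The key tool is that in a regular category regular
epimorphisms are pullback-stable, so pulling $(f',s')$ back along an
arbitrary $g'\colon C'\to B'$ and comparing with a suitable pullback of
$(f,s)$ lets me transport the joint-strong-epimorphy downward along a
regular epimorphism. Concretely, one forms the pullback of $f$ along a
morphism $C\to B$ lifting $g'$ (obtained by pulling $\beta$ back along
$g'$, which again yields a regular epimorphism), obtains that the induced
comparison between the two pullback objects is a regular epimorphism, and
then uses that jointly strongly epimorphic pairs are stable under
post-composition with a regular (hence strong) epimorphism. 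The point is
that if $(\pi_A,s)$ is jointly strongly epimorphic and the vertical
comparison $X\times_B C\to X'\times_{B'} C'$ is a regular epimorphism,
then $(\pi_{A'},s')$ is jointly strongly epimorphic as well.

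\textbf{Stably strong points are closed under retractions.}
This is the part that needs no regularity: here the hypothesis is a
retraction, meaning there are sections $\sigma\colon A'\to A$ and
$\tau\colon B'\to B$ of $\alpha$ and $\beta$ compatible with the two points,
so that $(f',s')$ sits as a retract of the stably strong point $(f,s)$
inside $\Pt(\C)$. Any pullback of $(f',s')$ along a morphism $g'$ can then
be presented as a retract of the corresponding pullback of $(f,s)$ along
$g'\comp\beta$ (or an appropriate lift), because pullbacks are functorial
and retracts are preserved by the pullback functors of the fibration of
points. Thus the strongness of each pullback of $(f,s)$ is inherited by the
retracted pullback of $(f',s')$: a retract of a jointly strongly
epimorphic pair along a split epimorphism in the relevant fibre remains
jointly strongly epimorphic. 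I would verify this last elementary fact
directly, since joint strong epimorphy is exactly the condition of not
factoring through a proper subobject, and a section lets one pull back any
such factorisation.

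\textbf{Stably strong points are closed under quotients (regular case).}
Finally I would combine the two previous arguments. Given a quotient
diagram with $\alpha$, $\beta$ regular epimorphisms and $(f,s)$ stably
strong, I want every pullback of $(f',s')$ to be strong. The natural route
is to show that each such pullback is itself a quotient, by a pair of
regular epimorphisms, of a corresponding pullback of $(f,s)$, which is
strong by hypothesis; then the first assertion (strong points closed under
quotients) finishes the job. The main obstacle I anticipate is bookkeeping:
one must check that pulling the comparison square back along a morphism
$g'$ into $B'$ again produces a square whose horizontal arrows are regular
epimorphisms, and that the resulting lower point is precisely the
pullback of $(f,s)$ one wants. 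This rests on pullback-stability of regular
epimorphisms and on the fact that the fibration of points transports the
whole quotient square fibrewise; once that square is in place, the already
established closure of \emph{strong} points under quotients applies
verbatim to each fibre.
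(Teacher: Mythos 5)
Your overall strategy (lift the quotient square by pulling back along $g'$, transport joint strong epimorphy downwards, and deduce the stably strong cases by applying the first assertion to the pulled-back square) is the same as the paper's, but your execution of the first assertion has a genuine flaw. That assertion is claimed for \emph{finitely complete} categories, yet you invoke pullback-stability of regular epimorphisms twice: to make the pullback $\beta'\colon C\to C'$ of $\beta$ along $g'$ a regular epimorphism, and to make the induced comparison $\alpha'\colon C\times_{B}A\to C'\times_{B'}A'$ a regular epimorphism. That stability is precisely what regularity adds and it fails in a general finitely complete category, so as written you only prove the first assertion in the regular case. Moreover, the epimorphy of $\alpha'$ and $\beta'$ is not what drives the argument: the pair $(\pi_{A'},s')$ has codomain $A'$, and the way to reach it is to post-compose the jointly strongly epimorphic pair $(\pi_{A},s)$ with the \emph{given} regular (hence strong) epimorphism $\alpha\colon A\to A'$, obtaining that $(\alpha\pi_{A},\alpha s)$ is jointly strongly epimorphic, and then to use the factorisations $\alpha\pi_{A}=\pi_{A'}\alpha'$ and $\alpha s=s'\beta$: any monomorphism through which $\pi_{A'}$ and $s'$ factor also factors $\alpha\pi_{A}$ and $\alpha s$, hence is an isomorphism. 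For this, $\alpha'$ and $\beta$ need only \emph{exist} (which follows from pasting of pullbacks, since the top square of the cube is again a pullback); no regularity is used, and the finitely complete statement follows.

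In the retraction assertion you also strengthen the hypothesis: you assume sections $\sigma$, $\tau$ compatible with the two points, i.e.\ that $(f',s')$ is a retract of $(f,s)$ in $\Pt(\C)$. Read parallel to the paper's definition of closure under quotients (which is how its proof reads it), the statement only assumes a commutative square of points in which $\alpha$ and $\beta$ are each split epimorphisms; no compatibility of any chosen sections is given. Your construction really uses it: the lift of $g'$ must be $\tau g'\colon C'\to B$, and the comparison from the pullback of $(f',s')$ along $g'$ to the pullback of $(f,s)$ along $\tau g'$ is only well defined when $f\sigma=\tau f'$ and $\sigma s'=s\tau$, so your argument does not cover the stated hypothesis. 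The paper handles retractions with the same cube as before: $\beta'$ and $\alpha'$ are pullbacks of $\beta$ and $\alpha$, hence split (so regular) epimorphisms in any finitely complete category, and the first assertion applies to the pulled-back square; the regular case is identical, with pullback-stability of regular epimorphisms supplying the epimorphy of $\alpha'$ and $\beta'$. Your third part coincides with this last argument and is fine once the first assertion is repaired.
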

\begin{proof}
Let us first prove that the quotient of a strong point is always
strong. So let $(f,s)$ be a strong point, and consider the diagram
\[
\xymatrix@=4em@!0{ P \cubepullback \ar@{->}[rr]^{\alpha'}
\ar@<-.5ex>[dd] \ar[dr]^-{\pi_{A}} & & P' \cubepullback
\ar@<-.5ex>[dd]|{\hole} \ar[dr]^-{\pi_{A'}} & \\
& A \ar@<-.5ex>[dd]_(.3)f \ar@{->>}[rr]^(.3)\alpha & & A'
\ar@<-.5ex>[dd]_{f'} \\
C \bottomcubepullback \ar[dr]_{g} \ar@<-.5ex>[uu]
\ar@{->}[rr]|(.47){\hole}|(.53){\hole}^(.7){\beta '} & & C'
\ar@<-.5ex>[uu]|{\hole} \ar[dr]_{g'} & \\
& B \ar@<-.5ex>[uu]_(.7)s \ar@{->>}[rr]_\beta & & B',
\ar@<-.5ex>[uu]_-{s'} }
\]
where $P'$ is the pullback of $f'$ along an arbitrary morphism
$g'$, $C$ is the pullback of~$g'$ along $\beta $, and~$P$ is the
pullback of $f$ along $g$. By pullback cancelation, the upper
square is a pullback too. Since $\alpha$ is a regular epimorphism,
we have that $\alpha\pi_{A}$ and $\alpha s$ are jointly strongly
epimorphic. Then it easily follows that $\pi_{A'}$ and $s'$ are
jointly strongly epimorphic, so that the point $(f', s')$ is a
strong point.

If now $(f,s)$ is stably strong, then the point $P
\leftrightarrows C$ is strong. If $\alpha$ and $\beta$ are
retractions, then so are $\alpha'$ and $\beta'$. If $\alpha$ and
$\beta$ are regular epimorphisms in a regular category, then so
are $\alpha'$ and $\beta'$. In both cases, $P' \leftrightarrows
C'$ is strong as a quotient of $P \leftrightarrows C$. Hence
$(f',s')$ is stably strong.
\end{proof}

As a consequence, in a regular category, a point $(f,s)$ is stably
strong if and only if the point $(\pi_1,\langle 1_A, sf \rangle)$
induced by its kernel pair is stably strong. Equivalently one
could consider the point $(\pi_2,\langle sf, 1_A \rangle)$.

Certain pushouts involving strong points satisfy a stronger
property. Recall from~\cite{Bourn2003} that a \defn{regular
pushout} in a regular category is a commutative square of regular
epimorphisms
\[
\vcenter{\xymatrix@!0@=4em{A' \ar@{->>}[d]_-{f'} \ar@{->>}[r]^-{\alpha} & A \ar@{->>}[d]^-f\\
B' \ar@{->>}[r]_-{\beta} & B}}
\]
where also the comparison arrow $\langle f',\alpha\rangle\colon
A'\to B'\times_{B}A$ is a regular epimorphism. Every regular
pushout is a pushout.

A \defn{double split epimorphism} in a category $\C$ is a point in
the category of points in $\C$, so a commutative diagram
\begin{equation}
\label{double split extension} \vcenter{ \xymatrix@!0@=4em{ D
\ar@<-.5ex>[d]_{g'} \ar@<-.5ex>[r]_{f'} & C \ar@<-.5ex>[d]_g
\ar@<-.5ex>[l]_{s'} \\
A \ar@<-.5ex>[u]_{t'} \ar@<-.5ex>[r]_f & B \ar@<-.5ex>[l]_s
\ar@<-.5ex>[u]_t }}
\end{equation}
where the four ``obvious'' squares commute.

\begin{lemma}\label{Lemma Double}
In a regular category, every double split epimorphism as in
\eqref{double split extension}, in which $(g,t)$ is a stably
strong point, is a regular pushout.
\end{lemma}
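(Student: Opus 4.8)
The plan is to read off from the definition of regular pushout exactly what has to be verified for the outer square of \eqref{double split extension} (the one obtained by forgetting the splittings $s$, $t$, $s'$, $t'$). Its four arrows $f$, $g$, $f'$ and $g'$ are split epimorphisms, hence regular epimorphisms, so the whole statement reduces to showing that the comparison morphism $\langle g',f'\rangle\colon D\to A\times_B C$ into the pullback of $f$ along $g$ is a regular epimorphism. I would take the (regular epi, mono)-factorisation $D\to I\to A\times_B C$ of this comparison and try to prove that its monomorphic part $m\colon I\to A\times_B C$ is an isomorphism.

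The hypothesis is brought in as follows. Pulling the stably strong point $(g,t)$ back along $f$ yields, by definition of stably strong, a strong point $(\pi_A,\langle 1_A,tf\rangle)\colon A\times_B C\leftrightarrows A$. First I would record two factorisations through the image $I$: a short computation with the commuting squares of \eqref{double split extension} shows $\langle g',f'\rangle\,t'=\langle 1_A,tf\rangle$ and $\langle g',f'\rangle\,s'=\langle sg,1_C\rangle$, so that both ``coprojections'' $\langle 1_A,tf\rangle$ and $\langle sg,1_C\rangle$ factor through $m$.

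The decisive step, which I expect to be the main obstacle, is to manufacture a jointly strongly epimorphic pair into $A\times_B C$ whose two members factor through $I$. For this I would apply the strong point property of $(\pi_A,\langle 1_A,tf\rangle)$, in the form \eqref{strong point diagram}, to the pullback taken along the morphism $sg\colon C\to A$. Because $f\cdot sg=g$, this pullback collapses the redundant coordinate, so that its projection $q$ onto $A\times_B C$ equals $\langle sg,1_C\rangle$ precomposed with a split epimorphism; in particular $q$ factors through $m$. The strong point property then delivers that $q$ and $\langle 1_A,tf\rangle$ are jointly strongly epimorphic, and since both factor through the monomorphism $m$, it follows that $m$ is an isomorphism. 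Hence $\langle g',f'\rangle$ is a regular epimorphism and the square is a regular pushout. The points requiring care are the precise identities among $f$, $g$, $s$, $t$, $f'$, $g'$, $s'$ and $t'$ used in the two factorisations, and the verification that pulling back along $sg$ reproduces the coprojection $\langle sg,1_C\rangle$ up to a split epimorphism.
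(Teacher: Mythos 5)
Your proposal is correct and follows essentially the same route as the paper's proof: factor the comparison $\langle g',f'\rangle\colon D\to A\times_{B}C$ as a regular epimorphism followed by a monomorphism $m$, pull the stably strong point $(g,t)$ back along $f$ to obtain the strong point $(\pi_{A},\langle 1_{A},tf\rangle)$, note that $\langle g',f'\rangle t'=\langle 1_{A},tf\rangle$ and $\langle g',f'\rangle s'=\langle sg,1_{C}\rangle$ factor through $m$, and invoke the strong-point property to conclude that $m$ is an isomorphism. The only (immaterial) difference is that you take the further pullback along $sg\colon C\to A$, so that the resulting projection is $\langle sg,1_{C}\rangle$ precomposed with a split epimorphism, whereas the paper pulls back along $s\colon B\to A$, which produces $\langle sg,1_{C}\rangle$ on the nose.
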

\begin{proof}
Take the pullback $A\times_{B}C$ of $f$ and $g$, consider the
comparison morphism $\langle g',f'\rangle\colon {D\to
A\times_{B}C}$ and factor it as a regular epimorphism $e\colon
{D\to M}$ followed by a monomorphism $m\colon {M\to
A\times_{B}C}$. Since $(g,t)$ is a stably strong point, its
pullback $(\pi_{A},\langle1_{A},tf\rangle)$ in the diagram
\[
\xymatrix@!0@C=5em@R=4em{ C \ophalfsplitpullback \ar@<-.5ex>[d]_-g
 \ar[r]^-{\langle sg, 1_{C} \rangle} & A\times_{B}C \ophalfsplitpullback \ar[r]^-{\pi_C}
\ar@<-.5ex>[d]_-{\pi_A} & C \ar@<-.5ex>[d]_-g
 \\
B \ar@<-.5ex>[u]_(.4)t \ar[r]_-{s} & A \ar@<-.5ex>[u]_(.4){\langle
1_{A},tf \rangle} \ar[r]_-f & B \ar@<-.5ex>[u]_-t }
\]
is a strong point. As a consequence, the pair $(\langle
sg,1_{C}\rangle, \langle1_{A},tf\rangle)$ is jointly strongly
epimorphic. They both factor through the monomorphism $m$ as in
the diagram
\[
\xymatrix@!0@R=4em@C=5em{ & M \ar@{{ >}->}[d]^-{m} & \\
C \ar[ur]^-{es'} \ar[r]_-{\langle sg, 1_{C} \rangle} &
A\times_{B}C & A, \ar[l]^-{\langle 1_{A},tf \rangle} \ar[ul]_{et'}
}
\]
so that $m$ is an isomorphism.
\end{proof}

\begin{lemma}\label{Bourn Lemma}
In a regular category, consider a commutative square of regular
epimorphisms with horizontal kernel pairs
\begin{equation*}\label{SpecialRG}
\vcenter{\xymatrix@!0@=4em{\Eq(g) \ar@<-1ex>[r] \ar@{->>}[d]_-{f''} \ar@<1ex>[r] & A' \ar[l] \ar@{->>}[d]^-{f'} \ar@{->>}[r]^-{g} & A \ar@{->>}[d]^-f\\
\Eq(h) \ar@<-1ex>[r] \ar@<1ex>[r]
 & B' \ar[l] \ar@{->>}[r]_-{h} & B. }}
\end{equation*}
If any of the commutative squares on the left is a regular pushout
(and so, in particular, $f''$ is a regular epimorphism), then the
square on the right is also a regular pushout.
\end{lemma}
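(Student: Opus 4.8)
The plan is to unwind the definition of regular pushout for the right-hand square and reduce the whole statement to a single regular-epimorphism assertion, which I will then extract from the hypothesis by a cancellation trick. Write $p_1,p_2\colon \Eq(g)\to A'$ and $q_1,q_2\colon \Eq(h)\to B'$ for the two pairs of kernel pair projections, so that $f''$ is the induced comparison characterised by $q_if''=f'p_i$ for $i\in\{1,2\}$. Since $g$, $f'$, $f$ and $h$ are already assumed to be regular epimorphisms, the right-hand square is a regular pushout precisely when the comparison morphism $\phi=\langle f',g\rangle\colon A'\to B'\times_B A$ into the pullback of $h$ and $f$ is a regular epimorphism. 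So everything reduces to showing that $\phi$ is a regular epimorphism. By the symmetry of the kernel pairs I may assume that it is the left-hand square built from $p_1$ and $q_1$ which is a regular pushout; the case of $p_2$ and $q_2$ is handled by interchanging the two projections throughout.

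Next I would translate the hypothesis into a convenient form. Saying that the square with sides $p_1$, $f''$, $f'$, $q_1$ is a regular pushout means exactly that the comparison $\langle f'',p_1\rangle\colon \Eq(g)\to \Eq(h)\times_{B'}A'$ into the pullback of $q_1$ and $f'$ is a regular epimorphism. The key observation is that this pullback is isomorphic to $B'\times_B A'$, the pullback of $h$ and $fg=hf'$: in an element of $\Eq(h)\times_{B'}A'$ the first $B'$-coordinate is forced to equal $f'$ of the $A'$-coordinate, so only the second $B'$-coordinate and the $A'$-coordinate carry information. Under this identification the hypothesis says that the morphism $\mu\colon \Eq(g)\to B'\times_B A'$, described on generalised elements by $(a_1',a_2')\mapsto (f'(a_2'),a_1')$, is a regular epimorphism.

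Now I would introduce the morphism $\sigma\colon B'\times_B A'\to B'\times_B A$ induced by $g$ on the second factor; this is nothing but the pullback of the regular epimorphism $g$ along the projection $\pi_A\colon B'\times_B A\to A$, and is therefore itself a regular epimorphism by pullback-stability of regular epimorphisms in a regular category. A direct check shows that $\sigma\mu=\phi p_2$: applying $\sigma$ to $(f'(a_2'),a_1')$ gives $(f'(a_2'),g(a_1'))$, and since $g(a_1')=g(a_2')$ on $\Eq(g)$ this equals $\phi(a_2')$. As $\mu$ and $\sigma$ are regular epimorphisms, so is their composite $\phi p_2$; and since in a regular category whenever a composite $vu$ is a strong (equivalently regular) epimorphism the left factor $v$ is a strong epimorphism, taking $v=\phi$ and $u=p_2$ yields that $\phi$ is a regular epimorphism, as required.

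The main obstacle is the second step: one must resist trying to prove directly that $\phi$ is a regular epimorphism, which does not follow formally, because the obvious comparison between the two pullbacks $\Eq(h)\times_{B'}A'$ and $\Eq(h)\times_B A$ is \emph{not} a pullback of $g$ and need not be a regular epimorphism. The trick is instead to route through $B'\times_B A'$ so that the hypothesis, transported along $\sigma$, factors precisely as $\phi p_2$. Once the identification $\Eq(h)\times_{B'}A'\cong B'\times_B A'$ and the equality $\sigma\mu=\phi p_2$ are in place, the conclusion is immediate from three standard closure properties of regular epimorphisms: stability under pullback, closure under composition, and left-cancellation for strong epimorphisms.
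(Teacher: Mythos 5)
Your argument is correct: the reduction to showing that $\langle f',g\rangle\colon A'\to B'\times_B A$ is a regular epimorphism, the identification $\Eq(h)\times_{B'}A'\cong B'\times_B A'$ by pullback pasting, the factorisation $\langle f',g\rangle p_2=\sigma\mu$ with $\sigma$ a pullback of the regular epimorphism $g$, and the concluding left-cancellation property of strong epimorphisms are all sound, and each step is purely diagrammatic (the element notation only describes maps into limits, so no embedding theorem is needed). This is essentially the same proof the paper relies on, since its own ``proof'' is simply a reference to Proposition~3.2 of \cite{Bourn2003}, whose argument proceeds in exactly this way.
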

\begin{proof}
The proof is essentially the same as the one of Proposition 3.2 in
\cite{Bourn2003}.
\end{proof}

\begin{proposition}
In a regular category, every regular epimorphism of points
\[
\xymatrix@!0@=3em{ D \ar@<-.5ex>[d] \ar@{->>}[r] & C \ar@<-.5ex>[d] \\
A \ar@<-.5ex>[u] \ar@{->>}[r] & B, \ar@<-.5ex>[u] }
\]
where the point on the left (and hence also the one on the right)
is stably strong, is a regular pushout.
\end{proposition}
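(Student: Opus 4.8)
The plan is to reduce the claim to Lemmas~\ref{Lemma Double} and~\ref{Bourn Lemma} by passing to kernel pairs in the horizontal direction. Write the given regular epimorphism of points as a pair of regular epimorphisms $\phi\colon D\to C$ and $\psi\colon A\to B$, compatible with the left point $(\alpha\colon D\to A,\ \sigma\colon A\to D)$ and the right point $(\beta\colon C\to B,\ \tau\colon B\to C)$; by hypothesis $(\alpha,\sigma)$ is stably strong. First I would form the kernel pairs $\Eq(\phi)$ over $D$ and $\Eq(\psi)$ over $A$ of the two horizontal maps. Since the kernel pair construction is functorial, $\alpha$ and $\sigma$ induce a point $(\alpha'',\sigma'')$ from $\Eq(\phi)$ to $\Eq(\psi)$. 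Together with $\phi$ and $\psi$ this produces exactly the configuration of Lemma~\ref{Bourn Lemma}: a diagram with two rows of horizontal kernel pairs, those of $\phi\colon D\to C$ and of $\psi\colon A\to B$, joined by the vertical split (hence regular) epimorphisms $\alpha''$, $\alpha$ and $\beta$.

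The key observation is that the left-hand square of this configuration, built from (say) the first kernel-pair projections $\Eq(\phi)\to D$ and $\Eq(\psi)\to A$ and split horizontally by the diagonals $\Delta_D$ and $\Delta_A$, is a double split epimorphism in the sense of~\eqref{double split extension}. Its distinguished vertical point---the one whose stable strongness is required in Lemma~\ref{Lemma Double}---is $(\alpha,\sigma)$ itself, which is stably strong by hypothesis. Hence Lemma~\ref{Lemma Double} applies and shows that this left-hand square is a regular pushout.

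It then remains to transport this conclusion along the horizontal regular epimorphisms. Since one of the left-hand squares is a regular pushout, Lemma~\ref{Bourn Lemma} yields that the right-hand square---formed by $\phi$, $\psi$, $\alpha$ and $\beta$---is a regular pushout as well; but this is precisely the original square, so the given regular epimorphism of points is a regular pushout. The main obstacle is the bookkeeping in the first two steps: one must check that $\alpha''$, $\sigma''$ and the diagonals genuinely assemble into a double split epimorphism, and correctly identify its distinguished point with the stably strong point $(\alpha,\sigma)$ we started from, rather than with the point on the right of the original diagram. Once this matching is set up correctly the two lemmas do all the work and no further computation is needed. (That the right point $(\beta,\tau)$ is then stably strong too, as noted parenthetically in the statement, follows separately from Proposition~\ref{stably strong points closed under quotients}, but is not used here.)
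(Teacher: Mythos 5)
Your proposal is correct and takes essentially the same route as the paper, whose entire proof reads that the result ``follows immediately from Lemma~\ref{Lemma Double} and Lemma~\ref{Bourn Lemma}''. Your kernel-pair construction, the identification of the left-hand projection square (split by the diagonals) as a double split epimorphism whose distinguished point is the stably strong point $(\alpha,\sigma)$, and the final application of Lemma~\ref{Bourn Lemma} are exactly the bookkeeping that the paper leaves implicit.
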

\begin{proof}
This follows immediately from Lemma~\ref{Lemma Double} and
Lemma~\ref{Bourn Lemma}.
\end{proof}

\section{$\s$-Mal'tsev and $\s$-protomodular categories}
\label{section S-Mal'tsev and S-protomodular}

As mentioned in Section~\ref{SSP}, a finitely complete category
$\C$ in which all points are (stably) strong defines a
protomodular category. If such an ``absolute'' property fails, one
may think of protomodularity in ``relative'' terms, i.e., with
respect to a class $\s$ of stably strong points. We also recall
the absolute and relative notions for the Mal'tsev context.

Recall that a finitely complete category $\C$ is called a
\defn{Mal'tsev category}~\cite{CLP, CPP} when every internal
reflexive relation in $\C$ is automatically symmetric or,
equivalently, transitive; thus an equivalence relation.
Protomodular categories are always Mal'tsev
categories~\cite{Bourn1996}. If $\C$ is a regular category, then
$\C$ is a Mal'tsev category when the composition of any pair of
(effective) equivalence relations $R$ and $S$ on a same object
commutes: $RS=SR$~\cite{CLP, Carboni-Kelly-Pedicchio}. Moreover,
Mal'tsev categories admit a well-known characterisation through
the fibration of points:

\begin{proposition}\cite[Proposition 10]{Bourn1996}\label{Mal'tsev via fibres}
A finitely complete category $\C$ is a Mal'\-tsev category if and
only if every fibre $\Pt_Y(\C)$ is (strongly) unital.\noproof
\end{proposition}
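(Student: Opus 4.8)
The plan is to prove the two implications separately, using as a bridge the relational characterisation of Mal'tsev categories: a finitely complete category is a Mal'tsev category if and only if every reflexive relation in it is transitive (see~\cite{CLP, Carboni-Kelly-Pedicchio}). Throughout, a reflexive relation on an object $X$ will be written as a monomorphism $(d_0,d_1)\colon R\rightarrowtail X\times X$ equipped with a common section $s_0\colon X\to R$ of both $d_0$ and $d_1$.

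Suppose first that every fibre is unital, and let $R$ be such a reflexive relation on an object $X$; I will show it is transitive. Both $(d_0,s_0)$ and $(d_1,s_0)$ are points over $X$, and their product in $\Pt_X(\C)$ is the pullback $P$ of composable pairs, i.e. $P=\{(u,v)\mid d_1u=d_0v\}$, with structural map to $X$ the common value $d_1u=d_0v$ and section $x\mapsto(s_0x,s_0x)$. The two inclusions of this product in the fibre are the morphisms of points $\iota_1\colon u\mapsto(u,s_0d_1u)$ and $\iota_2\colon v\mapsto(s_0d_0v,v)$. Let $\chi\colon P\to X\times X$ send $(u,v)$ to the pair of outer endpoints $(d_0u,d_1v)$; then a direct computation gives $\chi\iota_1=(d_0,d_1)=\chi\iota_2$, so both composites factor through the monomorphism $(d_0,d_1)\colon R\rightarrowtail X\times X$. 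Form the pullback $Q\rightarrowtail P$ of $(d_0,d_1)$ along $\chi$. By reflexivity of $R$ the section of $P$ factors through $Q$, so $Q$ is a subobject of $P$ in the fibre $\Pt_X(\C)$, and both $\iota_1$ and $\iota_2$ factor through it. Since $\Pt_X(\C)$ is unital, the pair $(\iota_1,\iota_2)$ is jointly strongly epimorphic, and hence $Q\rightarrowtail P$ is an isomorphism. Therefore $\chi$ factors through $R$, which is precisely the transitivity of $R$; by the relational characterisation, $\C$ is a Mal'tsev category.

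Conversely, assume $\C$ is a Mal'tsev category and fix an object $Y$. The domain functor $\Pt_Y(\C)\to\C$ sending $(f\colon A\to Y, s)$ to $A$ preserves pullbacks and reflects monomorphisms and isomorphisms. Consequently a reflexive relation in the pointed category $\Pt_Y(\C)$ is carried to a subobject of $A\times_Y A\rightarrowtail A\times A$, that is, to a reflexive relation on $A$ in $\C$; this relation is an equivalence relation because $\C$ is Mal'tsev. As the resulting symmetry and transitivity comparison morphisms are automatically compatible with the structural maps to $Y$, they lift back into the fibre. Hence each $\Pt_Y(\C)$ is itself a pointed Mal'tsev category, and a pointed Mal'tsev category is strongly unital, in particular unital (\cite[Section~2.2]{Borceux-Bourn}). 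This establishes the stronger \emph{strongly unital} form and closes the equivalence.

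The technical heart is the first implication: one must correctly identify the fibre product $P$ with the object of composable pairs, exhibit the two inclusions $\iota_1$, $\iota_2$ explicitly, and verify that their composites with the endpoint map $\chi$ both factor through $R$ — this is exactly what converts unitality into transitivity. I expect the main obstacle to be the careful bookkeeping of these endpoint computations together with the check that $Q$ is genuinely a subobject \emph{in the fibre} (so that the unital hypothesis applies). The converse is lighter, but still requires verifying that the equivalence-relation structure produced by the Mal'tsev property of $\C$ is made of morphisms over $Y$, so that it truly lives in $\Pt_Y(\C)$.
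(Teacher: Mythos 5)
Your proof is correct. Note first that there is nothing in the paper to compare it with: the proposition is quoted from \cite[Proposition 10]{Bourn1996} and deliberately left without proof, and your argument is in essence the classical one. The technical half checks out in every detail: the product of the points $(d_1,s_0)$ and $(d_0,s_0)$ in $\Pt_X(\C)$ is indeed the object $P$ of composable pairs; the canonical product inclusions of the (pointed) fibre are exactly your $\iota_1=\langle 1_R,s_0d_1\rangle$ and $\iota_2=\langle s_0d_0,1_R\rangle$, since the zero morphism of $\Pt_X(\C)$ between these two points is $s_0d_1$ (respectively $s_0d_0$); both composites with $\chi=\langle d_0p_1,d_1p_2\rangle$ equal $\langle d_0,d_1\rangle$ because $d_0s_0=1_X=d_1s_0$; and the pullback $Q$ of $R$ along $\chi$ is a subobject of $P$ \emph{in the fibre} because reflexivity makes the section $x\mapsto(s_0x,s_0x)$ of $P$ land in $Q$. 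Unitality of the fibre then forces $Q\cong P$, i.e.\ transitivity, and transitivity of all reflexive relations is one of the equivalent formulations of the Mal'tsev property recalled in the paper.

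The one place where you should tread carefully is the converse. Your reduction---each fibre is a pointed, finitely complete category in which every reflexive relation is an equivalence relation, because relations in the fibre are relations in $\C$ and the symmetry and transitivity morphisms are automatically morphisms over $Y$---is sound. But the black box you then invoke, ``every pointed finitely complete Mal'tsev category is strongly unital'', is \emph{literally a special case of the proposition you are proving}: for pointed $\C$ one has $\Pt_0(\C)\simeq\C$, so this fact is the implication ``Mal'tsev $\Rightarrow$ fibres strongly unital'' evaluated at $Y=0$. Citing it wholesale from \cite{Borceux-Bourn} therefore risks circularity unless the source proves it independently of the fibrewise characterisation. The gap is easy to fill on the spot: in a finitely complete Mal'tsev category every relation $R$ is difunctional (apply symmetry of the reflexive relation ``$(a,b)\sim(a',b')$ iff $a\,R\,b'$'' on the object $R$, cf.~\cite{CLP}), and difunctionality applied to a punctual relation yields unitality; subtractivity is immediate, since a left punctual reflexive relation is symmetric and hence right punctual; finally, unital plus subtractive implies strongly unital by \cite[Proposition 3]{ZJanelidze-Subtractive}, a result the paper itself uses. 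With that substitution (or a citation to an independent proof of the pointed case), your argument is complete and non-circular.
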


The condition that $\Pt_Y(\C)$ is unital means that, for
every pullback of split epimorphisms
\begin{equation}
\label{pb of split epis} \vcenter{\xymatrix@!0@=5em{ A\times_{Y}C
\splitsplitpullback \ar@<-.5ex>[d]_{\pi_A}
\ar@<-.5ex>[r]_(.7){\pi_C} & C \ar@<-.5ex>[d]_g
\ar@<-.5ex>[l]_-{\langle sg,1_C \rangle} \\
A \ar@<-.5ex>[u]_(.4){\langle 1_A,tf \rangle} \ar@<-.5ex>[r]_f & Y
\ar@<-.5ex>[l]_s \ar@<-.5ex>[u]_t }}
\end{equation}
(which is a binary product in $\Pt_Y(\C)$), the morphisms $\langle
1_{A}, tf \rangle$ and $\langle sg, 1_{C} \rangle$ are jointly
strongly epimorphic.

Let $\C$ be a finitely complete category, and $\s$ a class of
points which is stable under pullbacks along any morphism.

\begin{definition} \label{S-Mal'tsev and S-protomodular categories} Suppose that the full subcategory of
$\Pt(\C)$ whose objects are the points in $\s$ is closed in
$\Pt(\C)$ under finite limits. The category $\C$ is said to be:
\begin{enumerate}
\item \defn{$\s$-Mal'tsev}~\cite{Bourn2014} if, for every pullback
of split epimorphisms~\eqref{pb of split epis} where the point
$(f,s)$ is in the class $\s$, the morphisms $\langle 1_{A}, tf
\rangle$ and $\langle sg, 1_{C} \rangle$ are jointly strongly
epimorphic; \item \defn{$\s$-protomodular}~\cite{SchreierBook,
S-proto, Bourn2014} if every point in $\s$ is strong.
\end{enumerate}
\end{definition}

The notion of $\s$-protomodular category was introduced to
describe, in categorical terms, some convenient properties of
\emph{Schreier split epimorphisms} of monoids and of semirings.
Such split epimorphisms were introduced in~\cite{BM-FMS} as those
points which correspond to classical monoid actions and, more
generally, to actions in every category of \emph{monoids with
operations}, via a semidirect product construction.

In~\cite{SchreierBook, BM-FMS2} it was shown that, for Schreier
split epimorphisms, relative versions of some properties of all
split epimorphisms in a protomodular category hold, like for
instance the \emph{split short five lemma}.

In~\cite{S-proto} it is proved that every category of monoids with
operations, equipped with the class $\s$ of Schreier points, is
$\s$-protomodular, and hence an $\s$-Mal'tsev category. Indeed, as
shown in~\cite{S-proto, Bourn2014}, every $\s$-protomodular
category is an $\s$-Mal'tsev category. Later, in
\cite{MartinsMontoliSH} it was proved that every
J\'{o}nsson--Tarski variety is an $\s$-proto\-modular category
with respect to the class $\s$ of Schreier points.
A~(non-absolute) example of an $\s$-Mal'tsev category which is not
$\s$-protomodular, given in \cite{Bourn-quandles}, is the category
of quandles.

The following definition first appeared in~\cite[Definition
6.1]{S-proto} for pointed $\s$-protomodular categories, then it
was extended in~\cite{Bourn2014} to $\s$-Mal'tsev categories.

\begin{definition}
Let $\C$ be a finitely complete category and $\s$ a class of
points which is stable under pullbacks along any morphism. An
object $X$ in $\C$ is
\defn{$\s$-special} if the point
\begin{equation*}
(\pi_{1}\colon{X\times X\to X},\quad \Delta_{X}=\langle
1_{X},1_{X} \rangle\colon {X\to X\times X})
\end{equation*}
belongs to $\s$ or, equivalently, if the point $(\pi_2, \Delta_X)$
belongs to $\s$. We write $\s(\C)$ for the full subcategory of
$\C$ determined by the $\s$-special objects.
\end{definition}

According to Proposition 6.2 in~\cite{S-proto} and its
generalisation~\cite[Proposition 4.3]{Bourn2014} to $\s$-Mal'tsev
categories, if $\C$ is an $\s$-Mal'tsev category, then the
subcategory $\s(\C)$ of $\s$-special objects of $\C$ is a Mal'tsev
category, called the
\defn{Mal'tsev core} of $\C$ relatively to the class $\s$. When
$\C$ is $\s$-protomodular, $\s(\C)$ is a protomodular category,
called the \defn{protomodular core} of $\C$ relatively to the
class $\s$.

Proposition 6.4 in~\cite{S-proto} shows that the protomodular core
of the category $\Mon$ of monoids relatively to the class $\s$ of
Schreier points is the category $\Gp$ of groups; similarly, the
protomodular core of the category $\SRng$ of semirings is the
category $\Rng$ of rings, also with respect to the class of
Schreier points.

Our main problem in this work is to obtain a categorical-algebraic
characterisation of groups amongst monoids, and of rings amongst
semirings. Based on the previous results, one direction is to look
for a suitable class $\s$ of stably strong points in a general
finitely complete category $\C$ such that the full subcategory
$\s(\C)$ of $\s$-special objects gives the category of groups when
$\C$ is the category of monoids and gives the category of rings
when $\C$ is the category of semirings: $\s(\Mon)=\Gp$ and
$\s(\SRng)=\Rng$.

We explore different possible classes in the following sections as
well as the outcome for the particular cases of monoids and
semirings. A first ``obvious'' choice is to consider $\s$ to be
the class of \emph{all} stably strong points in $\C$. Then an
$\s$-special object is precisely what we call a strongly unital
object in the next section. We shall see that the subcategory
$\s(\C)$ of $\s$-special objects is the protomodular core (namely
$\Rng$) in the case of semirings, but not so in the case of
monoids. Moreover, we propose an alternative ``absolute'' solution
to our main problem, not depending on the choice of a class $\s$
of points, and we compare it with this ``relative'' one.

\section{Strongly unital objects}\label{SUO}

The aim of this section is to introduce the concept of a strongly
unital object. We characterise rings amongst semirings as the
strongly unital objects (Theorem~\ref{SU(SRng)=Rng}). We prove
stability properties for strongly unital objects and show that, in
the regular case, they coincide with the \emph{gregarious} objects
of~\cite{Borceux-Bourn}.

Let $\C$ be a pointed finitely complete category.

\begin{definition} \label{definition SU}
Given an object $Y$ of $\C$, we say that $Y$ is \defn{strongly
unital} if the point
\begin{equation*}
(\pi_{1}\colon{Y\times Y\to Y},\quad \Delta_{Y}=\langle
1_{Y},1_{Y} \rangle\colon {Y\to Y\times Y})
\end{equation*}
is stably strong.
\end{definition}

Note that we could equivalently ask that the point
$(\pi_{2},\Delta_{Y})$ is stably strong. We write $\SU(\C)$ for
the full subcategory of $\C$ determined by the strongly unital
objects.

\begin{remark}\label{Stably strong not Schreier}
An object $Y$ in $\C$ is strongly unital if and only if it is
$\s$-special, when $\s$ is the class of all stably strong points
in $\C$.
\end{remark}

\begin{theorem}
\label{SU(SRng)=Rng} If $\C$ is the category $\SRng$ of semirings,
then $\SU(\C)$ is the category $\Rng$ of rings. In other words, a
semiring $X$ is a ring if and only if the point
\[
(\pi_{1}\colon{X\times X\to X},\quad \Delta_{X}=\langle
1_{X},1_{X} \rangle\colon {X\to X\times X})
\]
is stably strong in $\SRng$.
\end{theorem}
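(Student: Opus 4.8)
The plan is to unwind the definition of a stably strong point into a concrete statement about sub-semirings of a product, and then to analyse the sub-semiring generated by a diagonal together with a kernel. Throughout I would work element-wise, which is legitimate since $\SRng$ is a variety, hence a pointed regular category in which a pair of morphisms is jointly strongly epimorphic precisely when the smallest sub-semiring containing both their images is the whole codomain.

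First I would compute the relevant pullbacks. Pulling back the point $(\pi_1,\Delta_X)$ along a homomorphism $g\colon C\to X$ yields, up to the canonical identification $C\times_X(X\times X)\cong C\times X$, the point $(\pi_C\colon C\times X\to C,\ \langle 1_C,g\rangle)$, whose kernel is $\{0\}\times X$ with inclusion $\langle 0,1_X\rangle$. By the Proposition characterising strong points in the pointed case, this pullback is strong exactly when the pair $(\langle 0,1_X\rangle,\langle 1_C,g\rangle)$ is jointly strongly epimorphic, that is, when the sub-semiring of $C\times X$ generated by $K=\{0\}\times X$ and the graph $D_g=\{(c,g(c)):c\in C\}$ is all of $C\times X$. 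Thus $X$ is strongly unital if and only if this holds for every such $g$.

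The key step is an explicit description of that generated sub-semiring. I would observe that $D_g$ is itself a sub-semiring (the graph of the homomorphism $g$, and in particular it contains $(1_C,1_X)$), while $K$ satisfies $D_g\cdot K,\ K\cdot D_g,\ K\cdot K\subseteq K$, since the first coordinate of any product having a factor in $K$ vanishes. Hence every product of generators already lies in $D_g\cup K$, so the set of finite sums of elements of $D_g\cup K$ is closed under both operations and equals the generated sub-semiring. Using that $g$ is additive, such a sum collapses to the form $(c,\,g(c)+k)$ with $c\in C$ and $k\in X$; therefore the generated sub-semiring is exactly $\{(c,y):y\in g(c)+X\}$.

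It then remains to read off both implications. If $X$ is a ring, then for any $(c,y)$ one has $y=g(c)+(y-g(c))$ with $y-g(c)\in X$, so the sub-semiring is all of $C\times X$ for every $g$; hence $(\pi_1,\Delta_X)$ is stably strong and $X$ is strongly unital. Conversely, strength for the single pullback $g=1_X$ already forces the sub-semiring $\{(a,b):b\in a+X\}$ of $X\times X$ to be everything; applying this to the pairs $(a,0)$ produces, for each $a\in X$, an element $k\in X$ with $a+k=0$, so the additive monoid of $X$ is a group and $X$ is a ring. The only delicate point is the combinatorial collapse in the key step—checking that iterated sums and products yield no genuinely new elements—but this is precisely where the multiplicative absorption of $K$ and the additivity of $g$ do the work.
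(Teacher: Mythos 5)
Your proposal is correct, and it takes a genuinely different route from the paper's proof. The paper handles the two implications by different means: for ``ring implies stably strong'' it cites the theory of Schreier points (every point over a ring is a Schreier point, and Schreier points of semirings are stably strong), while for the converse it fixes an element $x\in X$, pulls $(\pi_1,\Delta_X)$ back along a morphism $\N\to X$ attached to $x$, and analyses combinatorially how $(1,0_X)$ can decompose as a sum of products of generators. Your single key lemma---the identification of the sub-semiring of $C\times X$ generated by $\{0\}\times X$ and the graph of $g$ as $\{(c,g(c)+k)\mid c\in C,\ k\in X\}$, which holds because products of generators stay in the union of the two generating sets and sums collapse by additivity of $g$---does both jobs at once: when $X$ is a ring this set is all of $C\times X$ for every $g$, giving stable strength directly and without citations; and for $g=1_X$ it forces every $(a,0)$ to be of the form $(a,a+k)$, producing additive inverses. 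Your route buys two things. First, it is self-contained, and it proves the sharper statement that $(\pi_1,\Delta_X)$ being merely \emph{strong} already makes $X$ a ring, so that strong and stably strong coincide for these diagonal points in $\SRng$---in contrast with the paper's subtraction-algebra example, where the diagonal point is strong but not stably strong. Second, it sidesteps a delicate feature of the paper's choice of test pullback: the map $\N\to X$, $n\mapsto nx$, preserves multiplication only when $x$ is multiplicatively idempotent, so the paper's argument really requires the free semiring on one generator in place of $\N$ (there the generator is additively and multiplicatively indecomposable, which is exactly what the combinatorial step needs); pulling back along the identity avoids this issue entirely. One small correction: delete the parenthetical claim that $D_g$ contains $(1_C,1_X)$. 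For $\SRng$ to be a pointed category, as the theorem requires, semirings here must have $0$ as their only constant, so there is no $1_C$ or $1_X$; nothing in your argument uses this remark anyway.
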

\begin{proof}
If $X$ is a ring, then every point over it is stably strong: by
Proposition~6.1.6 in~\cite{SchreierBook} it is a Schreier point,
and Schreier points of semirings are stably strong by Lemma~6.1.1
combined with Proposition~6.1.8 of~\cite{SchreierBook}. Hence, it
suffices to show that any strongly unital semiring is a ring.
Suppose that the point
\[
(\pi_{1}\colon{X\times X\to X},\quad \Delta_{X}=\langle
1_{X},1_{X} \rangle\colon {X\to X\times X})
\]
is stably strong. Given any element $x \neq 0_X$ of $X$, consider
the pullback of $\pi_1$ along the morphism $x \colon \N \to X$
sending $1$ to $x$:
\[
\xymatrix@!0@C=6em@R=4em{ & X \ar@{{ >}->}[dl]_{\langle 0, 1_{X}
\rangle}
\ar@{{ >}->}[d]^{\langle 0, 1_{X} \rangle} \\
\N \times X \ophalfsplitpullback \ar[r]^{x \times 1_{X}}
\ar@<-.5ex>[d]_{\pi_1} & X \times X
\ar@<-.5ex>[d]_{\pi_1} \\
\N \ar@<-.5ex>[u]_(.4){\langle 1_{\N}, x \rangle} \ar[r]_x & X.
\ar@<-.5ex>[u]_{\langle 1_{X}, 1_{X} \rangle} } \] Consider the
element $(1,0_X) \in \N \times X$. Since the morphisms $\langle
1_{\N}, x \rangle$ and $\langle 0, 1_{X} \rangle$ are jointly
strongly epimorphic, $(1,0_X)$ can be written as the sum of
products of chains of elements of the form $(0, \bar{x})$ and $(n,
nx)$. Using the fact that $0 \in \N$ is absorbing for the
multiplication in $\N$ and that in every semiring the sum is
commutative and the multiplication is distributive with respect to
the sum, we get that $(1,0_X)$ can be written as
\[
(1, 0_X) = (0, y) + (1, x)
\]
for a certain $y \in X$. Then $y+ x = 0_X$ and hence the element
$x$ is invertible for the sum. Thus we see that $X$ is a ring.
\end{proof}

\begin{remark}
Note that, in particular, $\SU(\SRng)=\Rng$ is a protomodular
category, so that $\Rng$ is the protomodular core of $\SRng$ with
respect to the class $\s$ of all stably strong points. As such, it
is necessarily the largest protomodular core of~$\SRng$ induced by
some class $\s$.
\end{remark}

Recall from~\cite{Borceux-Bourn,Bourn2002} that a \defn{split
right punctual span} is a diagram of the form
\begin{equation}\label{srps}
\xymatrix@!0@=4em{ X \ar@<.5ex>[r]^s & Z \ar@<.5ex>[l]^f
\ar@<-.5ex>[r]_g & Y \ar@<-.5ex>[l]_t }
\end{equation}
where $fs = 1_X$, $gt = 1_Y$ and $ft = 0$.

\begin{proposition}\label{SU characterisation}
If $\C$ is a pointed finitely complete category, then the
following conditions are equivalent:
\begin{tfae}
\item $Y$ is a strongly unital object of $\C$; \item for every
morphism $f\colon {X\to Y}$, the point
\[
(\pi_{X}\colon{X\times Y\to X},\quad \langle 1_{X},f \rangle\colon
{X\to X\times Y})
\]
is stably strong; \item for every $f\colon {X\to Y}$, the point
$(\pi_{X}, \langle 1_{X},f \rangle)$ is strong; \item given any
split right punctual span~\eqref{srps}, the map $\langle f, g
\rangle \colon {Z \to X \times Y}$ is a strong epimorphism.
\end{tfae}
\end{proposition}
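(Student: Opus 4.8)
The plan is to prove the cycle of implications $(i) \To (ii) \To (iii) \To (iv) \To (i)$, which keeps each individual step short and lets the definitions do most of the work.

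First I would tackle $(i) \To (ii)$. The key observation is that the point $(\pi_X \colon X \times Y \to X, \langle 1_X, f \rangle)$ is precisely the pullback of the strongly unital point $(\pi_1 \colon Y \times Y \to Y, \Delta_Y)$ along the given morphism $f \colon X \to Y$; one checks this by comparing the two kernels and splittings, or directly by the universal property. Since $Y$ is strongly unital, $(\pi_1, \Delta_Y)$ is stably strong by definition, and a pullback of a stably strong point is again stably strong (any further pullback of it is a pullback of the original point, by pullback cancellation). Hence $(\pi_X, \langle 1_X, f \rangle)$ is stably strong. The implication $(ii) \To (iii)$ is immediate, since every stably strong point is in particular strong (pull back along the identity).

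For $(iii) \To (iv)$, I would show that the span~\eqref{srps} factors through the point of $(iii)$. Given a split right punctual span with $fs = 1_X$, $gt = 1_Y$ and $ft = 0$, consider the point $(\pi_X \colon X \times Y \to X, \langle 1_X, 0 \rangle)$, which is the instance of $(iii)$ corresponding to the zero map $X \to Y$ (so it is strong). The map $\langle f, g \rangle \colon Z \to X \times Y$, together with the splitting, should realise $\langle f, g \rangle$ as factoring the two legs of a strong point: the section $s$ satisfies $\langle f, g \rangle s = \langle fs, gs \rangle = \langle 1_X, gs \rangle$ and the section $t$ satisfies $\langle f, g \rangle t = \langle ft, gt \rangle = \langle 0, 1_Y \rangle = \ker(\pi_X)$. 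Thus $\langle 1_X, gs \rangle$ (a section of $\pi_X$) and $\ker(\pi_X)$ are jointly strongly epimorphic by strongness of the point, and both factor through the image of $\langle f, g \rangle$; since a subobject through which a jointly strongly epimorphic pair factors must be the whole object, $\langle f, g \rangle$ is a strong epimorphism.

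Finally, $(iv) \To (i)$ is obtained by specialising the split right punctual span construction. To test that $(\pi_1, \Delta_Y)$ is stably strong, I pull it back along an arbitrary $g \colon C \to Y$ and must show the resulting point is strong, i.e.\ that a suitable pair is jointly strongly epimorphic; this amounts to exhibiting a split right punctual span whose associated comparison map is exactly the map whose strong-epimorphic-ness encodes the jointness, and then invoking $(iv)$. The main obstacle I anticipate is this last step: translating the ``stably strong'' condition (a statement about every pullback being strong) into the single split-right-punctual-span statement of $(iv)$ requires setting up the correct span for a given pullback diagram and verifying that the comparison map $\langle f, g \rangle$ coincides with the canonical map measuring joint strong epimorphy. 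Here I expect to use that in a pointed finitely complete category the relevant pullbacks can be described via kernels, so that the punctual span is built from the kernel inclusion and the two sections involved, reducing the stably-strong condition to a single strong-epimorphism statement that~$(iv)$ supplies.
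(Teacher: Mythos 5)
Your route is essentially the paper's own: (i)$\To$(ii)$\To$(iii) via the observation that the points in (ii)/(iii) are exactly the pullbacks of $(\pi_1,\Delta_Y)$, then a factorisation argument relating (iii) and (iv). However, two steps do not hold up as written. In (iii)$\To$(iv) you invoke the instance of (iii) at the zero morphism, i.e.\ the point $(\pi_X,\langle 1_X,0\rangle)$, and then conclude that $\langle 1_X,gs\rangle$ and $\ker(\pi_X)=\langle 0,1_Y\rangle$ are jointly strongly epimorphic ``by strongness of the point''. That is not what strongness of $(\pi_X,\langle 1_X,0\rangle)$ gives: it makes the pair $(\langle 0,1_Y\rangle,\langle 1_X,0\rangle)$ jointly strongly epimorphic, and $\langle 1_X,0\rangle$ need not factor through the image of $\langle f,g\rangle$, because in a split \emph{right} punctual span one does not assume $gs=0$ (that would be a split punctual span, the situation of Proposition~\ref{U characterisation}). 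The repair is immediate, and it is what the paper does: apply (iii) to the morphism $gs\colon X\to Y$, so that the strong point is $(\pi_X,\langle 1_X,gs\rangle)$; by the characterisation of strong points via kernels in the pointed setting (Section~\ref{SSP}), its section $\langle 1_X,gs\rangle$ and its kernel $\langle 0,1_Y\rangle$ are jointly strongly epimorphic, and both factor through the image of $\langle f,g\rangle$, via $s$ and $t$ respectively.

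The second gap is that (iv)$\To$(i) is left as a sketch: you correctly identify the missing step---building, from an arbitrary pullback of $(\pi_1,\Delta_Y)$, a split right punctual span to which (iv) applies---but you do not carry it out. The paper closes the cycle instead as (iv)$\To$(iii), which suffices, since (iii)$\To$(i) follows from your own identification of the pullbacks of $(\pi_1,\Delta_Y)$ as the points of (iii). Concretely: given $f\colon X\to Y$, to show that $\langle 1_X,f\rangle$ and $\langle 0,1_Y\rangle$ are jointly strongly epimorphic, suppose both factor through a monomorphism $m=\langle m_1,m_2\rangle\colon M\to X\times Y$, say $\langle 1_X,f\rangle=ma$ and $\langle 0,1_Y\rangle=mb$. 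Then $m_1a=1_X$, $m_1b=0$ and $m_2b=1_Y$, so the maps $a$, $m_1$, $m_2$, $b$ form a split right punctual span from $X$ to $Y$ with middle object $M$, whose comparison map is exactly $m$. By (iv), $m$ is a strong epimorphism as well as a monomorphism, hence an isomorphism. Note that no direct translation of ``stably strong'' into a span condition is ever needed: the equivalence of (i) with the pointwise condition (iii) already absorbs all the pullback bookkeeping, which is precisely the obstacle your sketch was worried about.
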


\begin{proof}
The equivalence between (i), (ii) and (iii) hold since any
pullback of the point $(\pi_1,\Delta_Y)$ is of the form
$(\pi_X,\langle 1_X, f\rangle)$ and any pullback of $(\pi_X,
\langle 1_X,f\rangle)$ is also a pullback of $(\pi_1, \Delta_Y)$.

To prove that (iii) implies (iv), consider a split right punctual
span as in~\eqref{srps}. By assumption, the point $(\pi_{X}\colon
{X\times Y \to X}, \langle 1_X, gs \rangle \colon {X\to X\times
Y})$ is strong. Suppose that $\langle f, g \rangle$ factors
through a monomorphism $m$
\[ \xymatrix@!0@=4em{ X \ar@{=}[dd] \ar@<.5ex>[rr]^s & & Z \ar[dl]_e \ar[dd]^{\langle f, g \rangle} \ar@<.5ex>[ll]^f \ar@<-.5ex>[rr]_g & & Y
\ar@<-.5ex>[ll]_t \ar@{=}[dd] \\
& M \ar@{{ >}->}[dr]^m & & & \\
X \ar@<.5ex>[rr]^{\langle 1_X, gs \rangle} & & X \times Y
\ar@<.5ex>[ll]^{\pi_X} & & Y. \ar[ll]^{\langle 0, 1_Y \rangle} }
\] Both $\langle 1_{X}, gs \rangle$ and $\langle 0, 1_{Y} \rangle$
factor through $m$, indeed $\langle 1_{X}, gs \rangle = mes$ and
$\langle 0, 1_{Y} \rangle = met$. Since $\langle 1_X, gs \rangle$
and $\langle 0, 1_{Y} \rangle$ are jointly strongly epimorphic,
$m$ is an isomorphism.

To prove that (iv) implies (iii), we must show that $\langle 0,
1_{Y} \rangle \colon Y \to X \times Y$ and $\langle 1_{X}, f
\rangle \colon X \to X \times Y$ are jointly strongly epimorphic.
Suppose that they factor through a monomorphism $m = \langle m_1,
m_2 \rangle \colon M \to X \times Y$:
\[
\xymatrix@!0@=4em{ & M \ar@{{ >}->}[d]|{\langle m_1, m_2 \rangle} & \\
X \ar[ur]^a \ar[r]_-{\langle 1_X, f \rangle} & X \times Y & Y.
\ar[l]^-{\langle 0, 1_Y \rangle} \ar[ul]_b }
\]
Then we have $m_1 a = 1_X$, $m_1 b = 0$ and $m_2 b = 1_Y$. Hence
we get a diagram
\[
\xymatrix@!0@=4em{ X \ar@<.5ex>[r]^-a & M \ar@<.5ex>[l]^-{m_1}
\ar@<-.5ex>[r]_-{m_2} & Y \ar@<-.5ex>[l]_-b }
\]
as in~\eqref{srps}. By assumption, the monomorphism $\langle m_1,
m_2 \rangle$ is also a strong epimorphism, so it is an
isomorphism.
\end{proof}

In general, a given point $(\pi_{1},\Delta_Y)$ can be strong
without being stably strong (Example~\ref{only strong}).
Nevertheless, if all such points are strong (so that $\C$ is
strongly unital), then they are stably strong (by Propositions
1.8.13 and 1.8.14 in~\cite{Borceux-Bourn} and Proposition~\ref{SU
characterisation}). This gives:

\begin{corollary}
If $\C$ is a pointed finitely complete category, then $\SU(\C)=\C$
if and only if $\C$ is strongly unital.\noproof
\end{corollary}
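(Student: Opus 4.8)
The plan is to establish the two implications separately, since the forward one is immediate while the reverse one carries all the weight. For the direction $\SU(\C)=\C\To\C$ strongly unital, I would simply observe that if every object $Y$ is strongly unital then, by Definition~\ref{definition SU}, each diagonal point $(\pi_1,\Delta_Y)$ is stably strong, and hence strong (pull it back along $1_Y$). As this holds for every object, the category $\C$ is strongly unital by definition.

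For the converse, I would assume that $\C$ is strongly unital, so that every diagonal point $(\pi_1,\Delta_X)$ is strong, and aim to show that each object $Y$ is strongly unital, i.e.\ that $(\pi_1,\Delta_Y)$ is in fact \emph{stably} strong. The key reduction is Proposition~\ref{SU characterisation}: it suffices to verify its condition~(iii), namely that for every $f\colon{X\to Y}$ the point $(\pi_X,\langle 1_X,f\rangle)$ is strong. As noted in the proof of that proposition, $(\pi_X,\langle 1_X,f\rangle)$ is precisely the pullback of $(\pi_1,\Delta_Y)$ along $f$, so what remains is exactly to upgrade strongness of diagonal points to stable strongness.

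I expect this upgrade to be the main obstacle, since a pullback of a diagonal point is generally not again a diagonal point, so the standing hypothesis cannot be applied to it directly. The gap is bridged by the classical characterisations of strongly unital categories, Propositions 1.8.13 and 1.8.14 in~\cite{Borceux-Bourn}, which ensure that in a strongly unital category every point of the form $(\pi_X,\langle 1_X,f\rangle)$ is strong (equivalently, via~\eqref{srps}, that the comparison $\langle f,g\rangle\colon{Z\to X\times Y}$ of each split right punctual span is a strong epimorphism). This is exactly condition~(iii) of Proposition~\ref{SU characterisation} for every object $Y$; applying the implication (iii)$\To$(i) I would conclude that each $Y$ is strongly unital, and since the argument is uniform in $Y$ this yields $\SU(\C)=\C$ and completes the proof.
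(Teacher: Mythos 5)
Your proof is correct and follows essentially the same route as the paper: the forward direction is immediate from the fact that stably strong points are strong, and the converse upgrades strongness of the diagonal points $(\pi_1,\Delta_Y)$ to stable strongness via Propositions 1.8.13 and 1.8.14 of~\cite{Borceux-Bourn} together with Proposition~\ref{SU characterisation}. This is exactly the argument the paper gives in the remark preceding the corollary (which is why it is marked as needing no proof).
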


\begin{corollary}
If $\C$ is a pointed finitely complete category and $\SU(\C)$ is
closed under finite limits in $\C$, then $\SU(\C)$ is a strongly
unital category.
\end{corollary}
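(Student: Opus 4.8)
The plan is to check the defining condition of a strongly unital category directly inside $\SU(\C)$, reducing everything to facts that already hold in $\C$. First I would record that the hypothesis makes $\SU(\C)$ a pointed finitely complete category: being closed under finite limits it contains the empty limit, which in the pointed category $\C$ is the zero object, and all its finite limits are computed as in $\C$. Thus the statement ``$\SU(\C)$ is strongly unital'' is meaningful, and by definition it amounts to proving that for every object $X$ of $\SU(\C)$ the point $(\pi_1\colon {X\times X\to X}, \Delta_X)$, formed in $\SU(\C)$, is a strong point of $\SU(\C)$.

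The crucial step is the following comparison between $\C$ and its subcategory: a cospan of morphisms between objects of $\SU(\C)$ that is jointly strongly epimorphic in $\C$ remains jointly strongly epimorphic in $\SU(\C)$. Indeed, since $\SU(\C)$ is closed under finite limits, a morphism between objects of $\SU(\C)$ is a monomorphism in $\SU(\C)$ exactly when it is one in $\C$; hence the subobjects of an object $A$ of $\SU(\C)$ that are available inside $\SU(\C)$ form a subclass of the subobjects of $A$ in $\C$. A pair factoring through no proper subobject of $A$ in $\C$ therefore factors through no proper subobject of $A$ in $\SU(\C)$ either.

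With this in hand I would finish as follows. Fix $X$ in $\SU(\C)$ and take any pullback in $\SU(\C)$ of $(\pi_1, \Delta_X)$ along a morphism $g\colon {C\to X}$. By closure under finite limits this pullback agrees with the one in $\C$, and every object occurring in it again lies in $\SU(\C)$. Since $X$ is a strongly unital object, $(\pi_1, \Delta_X)$ is stably strong, hence strong, in $\C$; so the comparison pair $(\pi_A, \Delta_X)$ of this pullback (with $A=X\times X$) is jointly strongly epimorphic in $\C$. The crucial step upgrades this to joint strong epimorphy in $\SU(\C)$, which shows that $(\pi_1, \Delta_X)$ is a strong point of $\SU(\C)$---in fact a stably strong one, using pullback pasting together with stable strongness in $\C$. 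As $X$ was arbitrary, $\SU(\C)$ is strongly unital. The only real obstacle is precisely the comparison of jointly-strongly-epimorphic pairs across the inclusion $\SU(\C)\hookrightarrow\C$, and this is exactly where the closure hypothesis does its work: restricting to the subcategory can only remove subobjects, so it can never destroy joint strong epimorphy.
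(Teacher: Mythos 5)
Your proof is correct and follows essentially the same route as the paper: the paper's one-line argument---that the inclusion $\SU(\C)\hookrightarrow\C$ is pointed, preserves monomorphisms and binary products (hence the relevant pullbacks), and reflects isomorphisms---is exactly the content of your ``crucial step'' transferring joint strong epimorphy from $\C$ to $\SU(\C)$, which you have merely spelled out in full detail.
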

\begin{proof}
The category $\SU(\C)$ is obviously pointed. Its inclusion into
$\C$ preserves monomorphisms and binary products and it reflects
isomorphisms.
\end{proof}

\begin{proposition}\label{4.8}
If $\C$ is a pointed regular category, then $\SU(\C)$ is closed
under quotients in $\C$.
\end{proposition}
\begin{proof}
This follows readily from Proposition~\ref{stably strong points
closed under quotients}.
\end{proof}

When $\C$ is a regular unital category, an object $Y$ satisfying
condition (iv) of Proposition~\ref{SU characterisation} is called
a
\defn{gregarious} object (Definition~1.9.1 and Theorem~1.9.7 in~\cite{Borceux-Bourn}). So, in
that case, $\SU(\C)$ is precisely the category of gregarious
objects in~$\C$.

\begin{example}\label{Greg}
$\SU(\Mon)=\GMon$, the category of gregarious monoids. A
monoid~$Y$ is gregarious if and only if for all $y\in Y$ there
exist $u$, $v\in Y$ such that $uyv=1$ (Proposition~1.9.2
in~\cite{Borceux-Bourn}). Counterexample~1.9.3
in~\cite{Borceux-Bourn} provides a gregarious monoid which is not
a group: the monoid $Y$ with two generators $x$,~$y$ and the
relation $xy=1$. Indeed $Y=\{y^{n}x^{m}\mid \text{$n$,
$m\in\N$}\}$ and $x^{n}(y^{n}x^{m})y^{m}=1$.
\end{example}

For monoids and the class $\s$ of all stably strong points of
monoids, we have $\s(\Mon)=\SU(\Mon)=\GMon\neq \Gp$ as explained
in Remark~\ref{Stably strong not Schreier}. In particular, there
are in $\Mon$ stably strong points which are not Schreier. Since
$\s(\Mon)$ is not protomodular, it is not a protomodular core with
respect to the class $\s$. Hence for the case of monoids, such a
class $\s$ does not meet our purposes. The major issue here
concerns the closedness of the class $\s$ in $\Pt(\C)$ under
finite limits. To avoid this difficulty, in the next sections our
work focuses more on objects rather than classes.

\section{Unital objects and subtractive objects}\label{USO}
It is known that a pointed finitely complete category is strongly
unital if and only if it is unital and
subtractive~\cite[Proposition 3]{ZJanelidze-Subtractive}. Having
introduced the notion of a strongly unital object, we now explore
analogous notions for the unital and subtractive cases. Our aim is
to prove that the equivalence above also holds ``locally'' for
objects in any pointed regular category.

Let $\C$ be pointed and finitely complete.

\begin{definition} \label{definition U}
Given an object $Y$ of $\C$, we say that $Y$ is \defn{unital} if
the point
\begin{equation*}
(\pi_{1}\colon{Y\times Y\to Y},\quad \langle 1_Y,0 \rangle\colon
{Y\to Y\times Y})
\end{equation*}
is stably strong.

Note that we could equivalently ask that the point
$(\pi_{2},\langle 0, 1_Y \rangle)$ is stably strong. We write
$\U(\C)$ for the full subcategory of $\C$ determined by the unital
objects.
\end{definition}

The following results are proved similarly to the corresponding
ones obtained for strongly unital objects. Recall
from~\cite{Borceux-Bourn,Bourn2002} that a \defn{split punctual
span} is a diagram of the form
\begin{equation}\label{ps}
\xymatrix@!0@=4em{ X \ar@<.5ex>[r]^s & Z \ar@<.5ex>[l]^f
\ar@<-.5ex>[r]_g & Y \ar@<-.5ex>[l]_t }
\end{equation}
where $fs = 1_X$, $gt = 1_Y$, $ft = 0$ and $gs=0$.

\begin{proposition}\label{U characterisation}
If $\C$ is a pointed finitely complete category, then the
following conditions are equivalent:
\begin{tfae}
\item $Y$ is a unital object of $\C$; \item for every object $X$,
the point $(\pi_{X}\colon{X\times Y\to X}, \langle 1_{X},0
\rangle\colon {X\to X\times Y})$ is stably strong; \item for every
object $X$, the point $(\pi_{X}, \langle 1_{X},0 \rangle)$ is
strong; \item given any split punctual span~\eqref{ps}, the map
$\langle f, g \rangle \colon {Z \to X \times Y}$ is a strong
epimorphism.\noproof
\end{tfae}
\end{proposition}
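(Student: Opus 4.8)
The plan is to follow the proof of Proposition~\ref{SU characterisation} almost verbatim, the only structural change being that the diagonal section $\Delta_Y$ is replaced by the inclusion $\langle 1_Y, 0\rangle$, and the split right punctual span~\eqref{srps} by the split punctual span~\eqref{ps}. For the equivalence of (i), (ii) and (iii), I would first observe that pulling back the point $(\pi_1, \langle 1_Y, 0\rangle)$ along an arbitrary $g\colon{X \to Y}$ yields, up to isomorphism, the point $(\pi_X, \langle 1_X, 0\rangle)$: here the second coordinate of the induced section $\langle 1_X, \langle 1_Y,0\rangle g\rangle$ collapses to $0$ regardless of $g$, since $0$ is preserved by pullback. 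Conversely every such point is visibly a pullback of $(\pi_1, \langle 1_Y, 0\rangle)$, and this family is closed under further pullback along any ${X'\to X}$. Hence the three conditions ``$(\pi_1, \langle 1_Y, 0\rangle)$ is stably strong'', ``$(\pi_X, \langle 1_X, 0\rangle)$ is stably strong for all $X$'' and ``$(\pi_X, \langle 1_X, 0\rangle)$ is strong for all $X$'' coincide.

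For (iii) $\To$ (iv), I would take a split punctual span~\eqref{ps} and note that, because $gs = 0$, the point $(\pi_X\colon{X\times Y \to X}, \langle 1_X, gs\rangle)$ used in the strongly unital argument is here exactly the point $(\pi_X, \langle 1_X, 0\rangle)$, which is strong by (iii). If $\langle f, g\rangle$ factors as $me$ through a monomorphism $m$, then $\langle 1_X, 0\rangle = \langle f,g\rangle s = mes$ and $\langle 0, 1_Y\rangle = \langle f, g\rangle t = met$ both factor through $m$; since these two are jointly strongly epimorphic, $m$ is an isomorphism and $\langle f, g\rangle$ is a strong epimorphism.

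For the converse (iv) $\To$ (iii), I would establish that $\langle 1_X, 0\rangle$ and $\langle 0, 1_Y\rangle$ are jointly strongly epimorphic by assuming they factor through a monomorphism $m = \langle m_1, m_2\rangle\colon{M \to X\times Y}$, say $\langle 1_X, 0\rangle = ma$ and $\langle 0, 1_Y\rangle = mb$. Reading off coordinates gives $m_1 a = 1_X$ and $m_2 b = 1_Y$, together with $m_1 b = 0$ and $m_2 a = 0$, which is precisely the data of a split punctual span ${X \leftrightarrows M \leftrightarrows Y}$ as in~\eqref{ps}. By (iv) the monomorphism $\langle m_1, m_2\rangle$ is then a strong epimorphism, hence an isomorphism.

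The argument is so closely parallel to the strongly unital case that I expect no genuine obstacle; the one point demanding care is the bookkeeping of the punctuality conditions. In~\eqref{srps} only $ft = 0$ is imposed, whereas~\eqref{ps} imposes both $ft = 0$ and $gs = 0$. The extra relation $gs = 0$ is exactly what makes the section collapse to $\langle 1_X, 0\rangle$ in the (iii) $\To$ (iv) step, and it is exactly the relation $m_2 a = 0$ that must be verified in the (iv) $\To$ (iii) step to produce a legitimate split punctual span. Keeping track of this symmetric pair of conditions is the only subtlety.
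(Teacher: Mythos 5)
Your proposal is correct and is exactly what the paper intends: Proposition~\ref{U characterisation} is stated with no written proof precisely because it ``is proved similarly to the corresponding ones obtained for strongly unital objects'', i.e.\ by the mutatis mutandis adaptation of the proof of Proposition~\ref{SU characterisation} that you carry out. You also correctly flag the only real point of care, namely that the extra punctuality relation $gs=0$ in~\eqref{ps} is what forces the section to be $\langle 1_X,0\rangle$ in (iii)~$\To$~(iv), and that the corresponding relation $m_2a=0$ must be (and is) available in (iv)~$\To$~(iii).
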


Just as any strongly unital category is always unital, we also
have:

\begin{corollary} \label{strongly unital implies unital}
In a pointed finitely complete category, a strongly unital object
is always unital.
\end{corollary}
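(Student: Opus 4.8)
The plan is to read the result straight off the two structural descriptions already established, namely Proposition~\ref{SU characterisation} for strongly unital objects and Proposition~\ref{U characterisation} for unital objects. This is the object-wise analogue of the classical fact that every strongly unital category is unital, and the mechanism is the same: both notions are expressed through a family of strong points indexed by certain data, and the family governing unitality is a \emph{specialisation} of the one governing strong unitality. So no genuinely new argument is needed; one only has to match the two lists of equivalent conditions.

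Concretely, suppose $Y$ is strongly unital. Condition~(iii) of Proposition~\ref{SU characterisation} then says that for \emph{every} morphism $f\colon X\to Y$ the point $(\pi_X, \langle 1_X, f\rangle)$ is strong. Since $\C$ is pointed, the zero morphism $0\colon X\to Y$ is an admissible value of $f$, and for this value the splitting $\langle 1_X, f\rangle$ becomes exactly $\langle 1_X, 0\rangle$. Thus $(\pi_X, \langle 1_X, 0\rangle)$ is strong for every object $X$, which is precisely condition~(iii) of Proposition~\ref{U characterisation}. Hence $Y$ is unital.

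Alternatively one may argue through condition~(iv) of each proposition: every split punctual span~\eqref{ps} is in particular a split right punctual span~\eqref{srps}, as it satisfies the three equations $fs=1_X$, $gt=1_Y$, $ft=0$ required there (it merely carries the extra relation $gs=0$, which one simply forgets). The strong-epimorphism conclusion of Proposition~\ref{SU characterisation}(iv), quantified over all split right punctual spans into $Y$, therefore restricts to the conclusion of Proposition~\ref{U characterisation}(iv). There is no real obstacle in this proof; the only point to check is the legitimacy of the specialisation, that is, that $0$ is a valid choice for $f$ and that the induced splitting coincides with the one appearing in Definition~\ref{definition U}, both of which are immediate in a pointed category.
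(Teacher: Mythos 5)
Your proof is correct and follows exactly the paper's own route: the paper proves this corollary simply by invoking Propositions~\ref{SU characterisation} and~\ref{U characterisation}, and your argument fills in the matching of conditions (specialising $f$ to the zero morphism in condition~(iii), or equivalently restricting condition~(iv) from split right punctual spans to split punctual spans). Both of your specialisations are legitimate, so nothing is missing.
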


\begin{proof}
By Propositions~\ref{SU characterisation} and~\ref{U
characterisation}.
\end{proof}

\begin{corollary}\label{5.4}
If $\C$ is a pointed finitely complete category, then $\U(\C)=\C$
if and only if $\C$ is unital.\noproof
\end{corollary}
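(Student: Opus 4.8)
The plan is to obtain the statement directly from Proposition~\ref{U characterisation}, exactly as the preceding corollary for strongly unital objects was read off from Proposition~\ref{SU characterisation}. First I would recall the two definitions involved: by definition $\C$ is a unital category precisely when, for every pair of objects $X$ and $Y$, the point $(\pi_X\colon {X\times Y\to X}, \langle 1_X,0\rangle\colon {X\to X\times Y})$ is strong; and $\U(\C)=\C$ means exactly that every object $Y$ of $\C$ is a unital object in the sense of Definition~\ref{definition U}.

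Next I would invoke the equivalence of conditions (i) and (iii) in Proposition~\ref{U characterisation}: an object $Y$ is unital if and only if, for every object $X$, the point $(\pi_X\colon {X\times Y\to X}, \langle 1_X,0\rangle)$ is strong. Quantifying this equivalence over all $Y$, the assertion ``every $Y$ is unital'' becomes ``for all objects $X$ and $Y$ the point $(\pi_X,\langle 1_X,0\rangle)$ is strong'', which is word for word the definition of a unital category recalled above. Both implications of the corollary then follow at once, with no further computation.

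The point worth stressing is that, in contrast with the strongly unital corollary, no separate step is needed to upgrade ``strong'' to ``stably strong''. There the definition of a strongly unital category only requires the diagonal points $(\pi_1\colon {X\times X\to X},\Delta_X)$ to be strong, which is a priori weaker than every object being strongly unital, so one must appeal to Propositions~1.8.13 and~1.8.14 of~\cite{Borceux-Bourn} to bridge the gap. Here, by contrast, the definition of unitality already ranges over all objects $X$ with the fixed section $\langle 1_X,0\rangle$, and this is precisely the family of points occurring in condition (iii) of Proposition~\ref{U characterisation}. Consequently the passage from strong to stably strong is absorbed by the universal quantifier over $X$ built into the definition of unitality, and there is in effect no obstacle to overcome; the entire content resides in Proposition~\ref{U characterisation}.
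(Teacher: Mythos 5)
Your proof is correct and matches the paper's intended argument exactly: the paper marks this corollary with no proof precisely because it follows immediately from the equivalence (i)$\Leftrightarrow$(iii) of Proposition~\ref{U characterisation} together with the definitions, quantified over all objects $Y$. Your closing observation---that the family of points $(\pi_X,\langle 1_X,0\rangle)$ is stable under pullback, so no analogue of the Borceux--Bourn argument needed for the strongly unital corollary is required here---is also accurate and explains why the paper can omit the proof entirely.
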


\begin{examples}
$\Mon$ and $\SRng$ are not strongly unital, but they are unital,
being J\'onsson--Tarski varieties (see Examples~\ref{Examples
unital}). So, $\U(\Mon)=\Mon$ and $\U(\SRng)=\SRng$.
\end{examples}

\begin{corollary}
If $\C$ is a pointed finitely complete category and $\U(\C)$ is
closed under finite limits in $\C$, then $\U(\C)$ is a unital
category.
\end{corollary}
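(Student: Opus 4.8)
The plan is to mimic the proofs of the three earlier corollaries of the same shape (Corollary~\ref{5.4} and its strongly-unital analogue), whose pattern is: $\U(\C)=\C$ precisely when $\C$ is unital, reducing everything to a categorical property via Proposition~\ref{U characterisation}. Here the target is to show that if the full subcategory $\U(\C)$ of unital objects is closed under finite limits in $\C$, then $\U(\C)$ is itself a unital category. First I would observe that $\U(\C)$ inherits pointedness and finite limits from $\C$ by hypothesis, so $\U(\C)$ is a pointed finitely complete category and the phrase ``is a unital category'' is meaningful. By Corollary~\ref{5.4} applied to the category $\U(\C)$ in place of $\C$, it suffices to check that every object of $\U(\C)$ is a unital \emph{object of $\U(\C)$}.

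The crux is therefore to compare the notion of ``unital object'' computed inside $\U(\C)$ with the one computed inside $\C$. An object $Y$ of $\U(\C)$ is unital in $\U(\C)$ when the point $(\pi_1\colon Y\times Y\to Y,\ \langle 1_Y,0\rangle)$, with the product and all relevant pullbacks formed \emph{in $\U(\C)$}, is stably strong. Since $\U(\C)$ is closed under finite limits in $\C$, the inclusion $\U(\C)\hookrightarrow\C$ preserves finite products and pullbacks, so these diagrams coincide with the corresponding diagrams in $\C$. What remains is the stability-of-strongness condition: I must ensure that a pullback diagram which is a strong point in $\C$ remains a strong point when viewed in the subcategory $\U(\C)$. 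This hinges on the fact that being jointly strongly epimorphic is tested against monomorphisms, and a subobject in $\U(\C)$ is in particular a subobject in $\C$; so joint strong epimorphy in $\C$ implies joint strong epimorphy in $\U(\C)$, the inclusion reflecting isomorphisms because it is full and replete on limits.

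Putting these together, I would argue as follows. Take any object $Y$ of $\U(\C)$; by definition it is a unital object of $\C$, so the defining point is stably strong in $\C$, i.e.\ each of its pullbacks in $\C$ is a strong point in $\C$. Because $\U(\C)$ is limit-closed, each such pullback lies in $\U(\C)$ and is computed identically there, and the joint-strong-epimorphy witnessing strongness transfers from $\C$ to $\U(\C)$ by the previous paragraph. Hence the point is stably strong in $\U(\C)$, so $Y$ is a unital object of $\U(\C)$. Since every object of $\U(\C)$ is thus unital, Corollary~\ref{5.4} gives that $\U(\C)$ is a unital category.

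The main obstacle I anticipate is the transfer of joint strong epimorphy between $\C$ and the full subcategory $\U(\C)$: one needs the closure under finite limits to guarantee both that the relevant pullbacks do not leave $\U(\C)$ and that a factorisation through a monomorphism inside $\U(\C)$ is no weaker a constraint than one inside $\C$. Concretely, if $(\pi_A,s)$ in $\C$ does not factor through any proper subobject in $\C$, then a fortiori it does not factor through any proper subobject that happens to lie in $\U(\C)$; combined with the limit-closure this yields strongness in $\U(\C)$ without extra work. Everything else is a routine invocation of Proposition~\ref{U characterisation} and Corollary~\ref{5.4}, so I would keep the write-up short, exactly in the terse style of the preceding corollary proofs.
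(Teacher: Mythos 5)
Your proposal is correct and follows exactly the paper's own route: the paper's proof is the one-line ``Apply Corollary~\ref{5.4} to $\U(\C)$'', which tacitly relies on precisely the transfer argument you spell out (limit-closure makes the relevant products and pullbacks agree, and joint strong epimorphy passes from $\C$ to the full, limit-closed subcategory $\U(\C)$ since monomorphisms there are monomorphisms in $\C$ and the inclusion reflects isomorphisms). You have simply made explicit the details the paper leaves implicit.
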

\begin{proof}
Apply Corollary~\ref{5.4} to $\U(\C)$.
\end{proof}

\begin{proposition}
If $\C$ is a pointed regular category, then $\U(\C)$ is closed
under quotients in $\C$.\noproof
\end{proposition}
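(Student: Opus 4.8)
The plan is to mimic the proof of Proposition~\ref{4.8} for strongly unital objects, replacing the diagonal splitting $\Delta_Y$ by the splitting $\langle 1_Y, 0 \rangle$ and invoking the same closure result, Proposition~\ref{stably strong points closed under quotients}. Recall that, by Definition~\ref{definition U}, an object $Y$ is unital precisely when the point $(\pi_1 \colon {Y \times Y \to Y}, \langle 1_Y, 0 \rangle)$ is stably strong. So, given a regular epimorphism $q \colon {Y \twoheadrightarrow Y'}$ with $Y$ unital, it suffices to transport this property along $q$ to the analogous point over $Y'$.

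First I would form the obvious morphism of points having $q$ on the base. Since in a regular category a product of regular epimorphisms is again a regular epimorphism, the map $q \times q \colon {Y \times Y \to Y' \times Y'}$ is a regular epimorphism. A direct check then shows that the square
\[
\xymatrix@!0@=4em{
Y \times Y \ar@<-.5ex>[d]_{\pi_1} \ar@{->>}[r]^{q \times q} & Y' \times Y' \ar@<-.5ex>[d]_{\pi_1} \\
Y \ar@<-.5ex>[u]_{\langle 1_Y, 0 \rangle} \ar@{->>}[r]_q & Y'. \ar@<-.5ex>[u]_{\langle 1_{Y'}, 0 \rangle} }
\]
commutes in both directions, since $\pi_1 (q \times q) = q \pi_1$ and $(q \times q) \langle 1_Y, 0 \rangle = \langle 1_{Y'}, 0 \rangle q$, the latter using that $q$ preserves the zero morphism in a pointed category.

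With this square in hand, the point $(\pi_1, \langle 1_Y, 0 \rangle)$ on the left is stably strong and both horizontal arrows are regular epimorphisms, so Proposition~\ref{stably strong points closed under quotients} yields that the point $(\pi_1, \langle 1_{Y'}, 0 \rangle)$ on the right is stably strong as well. Hence $Y'$ is a unital object, and $\U(\C)$ is closed under quotients in $\C$.

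I do not expect any genuine obstacle: the argument is essentially a transcription of the strongly unital case. The only step worth isolating is the observation that $q \times q$ is a regular epimorphism, which is exactly where the regularity of $\C$ (rather than mere finite completeness) enters; I would state this explicitly so that the role of the hypothesis is transparent.
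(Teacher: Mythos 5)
Your proof is correct and is essentially the paper's own argument: the paper proves these unital-object statements ``similarly to the corresponding ones obtained for strongly unital objects'', and its proof of the strongly unital analogue (Proposition~\ref{4.8}) is precisely an appeal to Proposition~\ref{stably strong points closed under quotients} via the morphism of points $(q\times q, q)$, exactly as you set up. Your explicit verification that $q\times q$ is a regular epimorphism and that both squares commute fills in the routine details the paper leaves implicit.
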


\subsection{Subtractive categories, subtractive objects}\label{SC}
We recall the definition of a subtractive category
from~\cite{ZJanelidze-Subtractive}. A relation $r=\langle
r_{1},r_{2}\rangle\colon {R\to X\times Y}$ in a pointed category
is said to be
\defn{left (right) punctual}~\cite{Bourn2002} if $\langle
1_X,0\rangle\colon {X\to X\times Y}$ (respectively $\langle
0,1_Y\rangle\colon {Y\to X\times Y}$) factors through $r$. A
pointed finitely complete category~$\C$ is said to be
\defn{subtractive}, if every left punctual reflexive relation on an object $X$ in
$\C$ is right punctual. It is equivalent to asking that right
punctuality implies left punctuality---which is the implication we
shall use to obtain a definition of subtractivity for objects.

\begin{example}
A variety of universal algebras is subtractive in the sense of
Example~\ref{only strong} if and only if the condition of~\ref{SC}
is satisfied (see \cite{ZJanelidze-Subtractive}).
\end{example}

It is shown in~\cite{ZJanelidze-Snake} that a pointed regular
category $\C$ is subtractive if and only if every span $\langle
s_1,s_2 \rangle \colon A \to B\times C$ is \defn{subtractive}:
written in set-theoretical terms, its induced relation $r=\langle
r_1,r_2 \rangle\colon R\to B\times C$, where $\langle s_1,s_2
\rangle=rp$ for $r$ a monomorphism and $p$ a regular epimorphism,
satisfies the condition
\[
(b,c),\; (b,0)\in R \quad\Rightarrow\quad (0,c)\in R.
\]

\begin{proposition}\label{subtraction via spans}
In a pointed regular category, consider a split right punctual
span~\eqref{srps}. The span $\langle g,f \rangle$ is subtractive
if and only if $f\ker(g)$ is a regular epimorphism.
\end{proposition}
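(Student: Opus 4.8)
The plan is to pass from the span $\langle g,f\rangle\colon Z\to Y\times X$ to its induced relation and then read off subtractivity as a condition on a single image. Write $\langle g,f\rangle = rp$ for the (regular epi, mono) factorisation, with $p\colon Z\to R$ a regular epimorphism and $r=\langle r_1,r_2\rangle\colon R\rightarrowtail Y\times X$ the induced relation, so that $r_1 p=g$ and $r_2 p=f$. First I would record two facts that hold for free from the split right punctual span. Since $fs=1_X$, we get $r_2(ps)=1_X$, so the second projection $r_2$ is a split, hence regular, epimorphism and its image is all of $X$. Since $gt=1_Y$ and $ft=0$, the morphism $pt$ witnesses that $\langle 1_Y,0\rangle = \langle gt,ft\rangle = r(pt)$ factors through $r$; that is, the relation $R$ is left punctual. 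In set-theoretic terms this says $(y,0)\in R$ for every $y$, so the defining implication $(y,x),(y,0)\in R\To(0,x)\in R$ of subtractivity collapses to the single implication $(y,x)\in R\To(0,x)\in R$.

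The technical heart is to identify, as subobjects of $X$, the image of $f\ker(g)$ with the ``$0$-fibre'' of $R$. Let $k=\ker(g)\colon K\to Z$ and consider the kernel inclusion $\iota\colon\ker(r_1)\rightarrowtail R$. Because $g=r_1p$, the object $K$ is the pullback of $\iota$ along $p$ (an element $z$ has $g(z)=0$ exactly when $p(z)\in\ker(r_1)$); pullback-stability of regular epimorphisms in the regular category $\C$ then makes the corestriction $pk\colon K\to\ker(r_1)$ a regular epimorphism. Moreover the restriction $j:=r_2\iota$ is a monomorphism, since $r\iota=\langle 0,j\rangle$ is one. Hence $fk = r_2pk = j\circ(pk)$ is a (regular epi, mono) factorisation of $fk$, and by uniqueness the image of $f\ker(g)$ is precisely $\ker(r_1)$, seen inside $X$ via $j$. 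I expect this step, and in particular the claim that $pk$ is a regular epimorphism, to be the main obstacle: it is exactly the place where regularity is needed, through the stability of regular epimorphisms under pullback.

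With this identification in hand the equivalence is immediate and symmetric. By the previous paragraph the condition $(0,x)\in R$ means exactly $x\in\operatorname{Im}(f\ker(g))$, so the reduced subtractivity condition $(y,x)\in R\To(0,x)\in R$ says that every $x$ in the image of $r_2$ lies in $\operatorname{Im}(f\ker(g))$, i.e. $\operatorname{Im}(r_2)\subseteq\operatorname{Im}(f\ker(g))$. Since $\operatorname{Im}(r_2)=X$ by the first paragraph, this holds if and only if $\operatorname{Im}(f\ker(g))=X$, that is, if and only if $f\ker(g)$ is a regular epimorphism. Reading the chain of equivalences in both directions yields the proposition. Throughout I would use the set-theoretic calculus of relations in a pointed regular category, which is legitimate here precisely because, as recalled just before the statement, subtractivity of a span is by definition the elementwise condition on its induced relation.
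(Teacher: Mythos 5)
Your proof is correct, and it follows the same overall strategy as the paper's: factor $\langle g,f\rangle$ as a regular epimorphism followed by the induced relation $r=\langle r_1,r_2\rangle$, and exploit the two splittings $s$ and $t$. The difference lies in the execution of the key step. The paper invokes the Barr embedding (Metatheorem A.5.7 in \cite{Borceux-Bourn}) and argues elementwise in both directions: for the forward implication it applies subtractivity to the pairs $(gs(x),x)$ and $(gs(x),0)$, witnessed by $s(x)$ and $tgs(x)$, and the converse is immediate. You use exactly the same witnesses---your left punctuality of $R$ is the paper's $(gs(x),0)\in R$, and the splitting $ps$ of $r_2$ yields the paper's $(gs(x),x)\in R$---but you isolate as a categorical lemma the identification of $\operatorname{Im}(f\ker(g))$ with $\ker(r_1)$, proved via pullback-stability of regular epimorphisms and uniqueness of image factorisations, and then recast subtractivity as the subobject inclusion $\operatorname{Im}(r_2)\leq \operatorname{Im}(f\ker(g))$. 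What this buys: the step the paper delegates entirely to the embedding, namely passing from ``$(0,x)\in R$ for all $x$'' to ``$f\ker(g)$ is a regular epimorphism'', becomes an honest categorical argument in your version; the residual elementwise reasoning (the collapse of the subtractivity condition using left punctuality, and the final comparison of subobjects) still requires the same metatheoretic justification you acknowledge at the end, so neither proof is embedding-free, but yours is more structural where the paper's is shorter and fully elementwise.
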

\begin{proof}
Thanks to the Barr embedding theorem \cite{Barr}, in a regular
context it suffices to give a set-theoretical proof (see
Metatheorem~A.5.7 in~\cite{Borceux-Bourn}, for instance). Consider
the factorisation
\[
\xymatrix@!0@C=4em@R=3em{Z\ar[rr]^-{\langle g,f \rangle} \ar@{>>}[dr]_-{p} & & Y\times X \\
 & R \ar@{ >->}[ur]_-{\langle r_1, r_2 \rangle}}
\]
of $\langle g,f\rangle$ as a regular epimorphism $p$ followed by a
monomorphism $\langle r_1, r_2 \rangle$. Then $(y,x)\in R$ if and
only if $y=g(z)$ and $x=f(z)$, for some $z\in Z$.

Suppose that $\langle g, f \rangle$ is subtractive. Given any
$x\in X$, we have $(gs(x),x)\in R$ for $z=s(x)$ and $(gs(x),0)\in
R$ for $z=tgs(x)$. Then $(0,x)\in R$ by assumption, which means
that $0=g(z)$ and $x=f(z)$, for some $z\in Z$. Thus $f\ker(g)$ is
a regular epimorphism.

The converse implication easily follows since $(0,x)\in R$, for
any $x\in X$, because $f\ker(g)$ is a regular epimorphism.
\end{proof}

This result leads us to the following ``local'' definition:

\begin{definition}\label{subtractive object}
Given an object $Y$ of a pointed regular category $\C$, we say
that $Y$ is
\defn{subtractive} when for every split right punctual span~\eqref{srps}, the morphism $f\ker(g)$ is a regular epimorphism.
\end{definition}

We write $\S(\C)$ for the full subcategory of $\C$ determined by
the subtractive objects.

\begin{proposition}
If $\C$ is a pointed regular category, then $\C$ is subtractive if
and only if all of its objects are subtractive.
\end{proposition}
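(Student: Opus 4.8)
The plan is to mediate between the object-wise Definition~\ref{subtractive object} and the relational definition of a subtractive category recalled in~\ref{SC}, using Proposition~\ref{subtraction via spans} as the bridge and the span characterisation of~\cite{ZJanelidze-Snake} where needed. Since $\C$ is pointed regular, I would argue set-theoretically throughout, which is legitimate by the Barr embedding theorem (Metatheorem~A.5.7 in~\cite{Borceux-Bourn}).

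For the forward implication I would simply unwind the characterisation. Assuming $\C$ subtractive, the result of~\cite{ZJanelidze-Snake} tells us that every span $\langle s_1,s_2\rangle\colon A\to B\times C$ is subtractive. In particular, for an arbitrary split right punctual span~\eqref{srps}, the span $\langle g,f\rangle$ is subtractive, so Proposition~\ref{subtraction via spans} yields that $f\ker(g)$ is a regular epimorphism. As this holds for every such span, each object $Y$ is subtractive in the sense of Definition~\ref{subtractive object}. This direction is formal and should cost nothing.

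The converse is where the real work lies. Here I would use the equivalent form of subtractivity recalled in~\ref{SC}: it suffices to show that every right punctual reflexive relation is left punctual. So I would start from a reflexive relation $r=\langle r_1,r_2\rangle\colon R\to X\times X$ equipped with a reflexivity section $\delta$ (giving $r_1\delta=r_2\delta=1_X$) and a right-punctuality section $d$ (giving $r_1d=0$ and $r_2d=1_X$), and assemble these data into a split right punctual span as in~\eqref{srps}:
\[
\xymatrix@!0@=4em{ X \ar@<.5ex>[r]^\delta & R \ar@<.5ex>[l]^{r_1} \ar@<-.5ex>[r]_{r_2} & X, \ar@<-.5ex>[l]_d }
\]
with $f=r_1$, $g=r_2$, $s=\delta$ and $t=d$; the three required identities $fs=1_X$, $gt=1_X$ and $ft=0$ are exactly $r_1\delta=1_X$, $r_2d=1_X$ and $r_1d=0$. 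Applying the hypothesis that $X$ is a subtractive object to this span, Definition~\ref{subtractive object} gives that $f\ker(g)=r_1\ker(r_2)$ is a regular epimorphism. Reading this set-theoretically, $\ker(r_2)$ consists of the $\rho\in R$ with $r_2(\rho)=0$, and the image of $r_1$ on it is $\{x\in X\mid (x,0)\in R\}$; that $r_1\ker(r_2)$ is a regular epimorphism says precisely that this set is all of $X$, i.e. $\langle 1_X,0\rangle$ factors through $r$. Thus $R$ is left punctual and $\C$ is subtractive.

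The step I expect to be the main obstacle is the bookkeeping in the converse: one must match the reflexivity and right-punctuality sections of $R$ with the two splittings of~\eqref{srps} in the correct order, so that the condition $ft=0$ is supplied by right punctuality and not by the left punctuality one is trying to establish, and one must then recognise that the regular-epimorphism conclusion for $r_1\ker(r_2)$ is nothing but left punctuality of $R$. Getting this orientation right is what makes the construction go through; everything else is routine.
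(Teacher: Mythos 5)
Your proof is correct and takes essentially the same route as the paper: the forward direction via the span characterisation from~\cite{ZJanelidze-Snake} together with Proposition~\ref{subtraction via spans}, and the converse by assembling a right punctual reflexive relation $\langle r_1,r_2\rangle\colon R\to X\times X$ into a split right punctual span and deducing left punctuality from $r_1\ker(r_2)$ being a regular epimorphism. The only cosmetic difference is in the last step: the paper argues categorically that the kernel square is a pullback, so $r_1\ker(r_2)$ is also a monomorphism and hence an isomorphism, whereas you read the same fact set-theoretically via the Barr embedding---legitimate here, since the factorisation of $\langle 1_X,0\rangle$ through the monomorphism $\langle r_1,r_2\rangle$ is equivalent to that pullback being an isomorphism, a statement within the scope of the metatheorem.
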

\begin{proof}
As recalled above, if $\C$ is subtractive, then every span is
subtractive. Then every object is subtractive by
Proposition~\ref{subtraction via spans}.

Conversely, consider a right punctual reflexive relation $\langle
r_{1},r_{2}\rangle\colon {R\to X\times X}$. By assumption,
$r_1\ker(r_2)$ is a regular epimorphism. In the commutative
diagram between kernels
\[
\xymatrix@!0@C=6em@R=5em{ K \ar@{ |>->}[r]^-{\ker(r_2)}
\ar@{>>}[d]_-{r_1 \ker(r_2)} \pullback & R \ar@{ >->}[d]^-{\langle
r_1, r_2 \rangle} \ar[r]^-{r_2}
 & X \ar@{=}[d] \\
 X \ar@{ |>->}[r]_-{\langle 1_X, 0 \rangle} & X\times X \ar[r]_-{\pi_2} & X,}
\]
the left square is necessarily a pullback. So, the regular
epimorphism $r_1 \ker(r_2)$ is also a monomorphism, thus an
isomorphism. The morphism $\ker(r_2)$ gives the factorisation of
$\langle 1_{X}, 0\rangle$ needed to prove that $R$ is a left
punctual relation.
\end{proof}

\begin{corollary}
If $\C$ is a pointed regular category and $\S(\C)$ is closed under
finite limits in $\C$, then $\S(\C)$ is a subtractive category.
\end{corollary}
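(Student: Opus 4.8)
The plan is to reduce the statement to the object-wise characterisation of subtractivity established just above, taking care to route every use of regular epimorphisms through $\C$ so that no regularity of $\S(\C)$ itself is required. First I would record the formal setup: since $\S(\C)$ is closed under finite limits in $\C$, the inclusion functor $J\colon \S(\C)\to\C$ preserves finite limits and, being fully faithful, preserves and reflects monomorphisms; moreover the zero object of $\C$ is trivially a subtractive object, so it lies in $\S(\C)$ and renders $\S(\C)$ pointed. Thus $\S(\C)$ is a pointed finitely complete category, which is exactly the context in which ``subtractive category'' is defined. I would then recall from the discussion following the definition of subtractive category that subtractivity is equivalent to the implication that every \emph{right} punctual reflexive relation is \emph{left} punctual, and verify the statement in this form.

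Next I would take an arbitrary right punctual reflexive relation $r=\langle r_1,r_2\rangle\colon R\to X\times X$ in $\S(\C)$. Because $J$ preserves finite limits and monomorphisms, and sends the canonical morphisms $\Delta_X$, $\langle 0,1_X\rangle$, $\langle 1_X,0\rangle$ of $\S(\C)$ to the corresponding ones in $\C$, the image $Jr$ is again a right punctual reflexive relation on $X$, now computed in $\C$. Since $X$ is by definition a subtractive object of $\C$, I can invoke the converse implication of the preceding proposition (``$\C$ is subtractive if and only if all of its objects are subtractive''), whose proof in fact proceeds one object at a time: the reflexivity section together with the right-punctuality section exhibit $r_1\colon R\to X$ and $r_2\colon R\to X$ as a split right punctual span as in~\eqref{srps}, so $r_1\ker(r_2)$ is a regular epimorphism by Definition~\ref{subtractive object}; the kernel diagram of that proof then forces $r_1\ker(r_2)$ to be an isomorphism, and $\ker(r_2)$ provides a factorisation of $\langle 1_X,0\rangle$ through $Jr$. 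In other words, $Jr$ is left punctual in $\C$.

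Finally I would descend this conclusion. The morphism $d\colon X\to R$ witnessing left punctuality is an arrow of $\C$ between the objects $X$ and $R$ of $\S(\C)$, hence, by fullness of $J$, an arrow of $\S(\C)$; and the equality $rd=\langle 1_X,0\rangle$ holds in $\S(\C)$ as well, these being literally the same morphisms. Thus $r$ is left punctual already in $\S(\C)$, and as $r$ was arbitrary, $\S(\C)$ satisfies the subtractivity condition. The only genuine point to watch — the step I would treat as the main obstacle — is the transfer of the notions \emph{relation}, \emph{reflexive}, \emph{left/right punctual} and \emph{monomorphism} between $\S(\C)$ and $\C$; all of these are finite-limit notions, so their stability is precisely what closure under finite limits and fullness of the inclusion supply, and it is exactly this device that lets me run the regular-category argument inside $\C$ without ever having to assert that $\S(\C)$ is regular.
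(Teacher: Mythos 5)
Your proof is correct, and the strategy is the paper's own: reduce the statement to the preceding proposition that a pointed regular category is subtractive if and only if all of its objects are subtractive. The paper's entire proof, however, is the one line ``Apply the above proposition to $\S(\C)$'', which taken literally requires $\S(\C)$ to be a pointed \emph{regular} category --- both the proposition's hypothesis and the very definition of a subtractive object refer to regular epimorphisms --- and regularity of $\S(\C)$ does not follow from closure under finite limits alone (closure of $\S(\C)$ under quotients is only proved afterwards, and even that would not immediately give regularity). Your unfolding repairs exactly this point: you transport a right punctual reflexive relation of $\S(\C)$ into $\C$ along the limit-preserving full inclusion, run the regular-epimorphism part of the proposition's proof entirely inside $\C$ --- where $X$ is a subtractive object, so $r_1\ker(r_2)$ is a regular epimorphism, indeed an isomorphism --- and then descend the resulting section $X\to R$ by fullness. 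What this buys is that only pointedness, finite completeness and fullness of $\S(\C)$ in $\C$ are ever used, never regularity of $\S(\C)$ itself; in that sense your argument is the precise justification that the paper's one-line proof needs in order to be correct.
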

\begin{proof}
Apply the above proposition to $\S(\C)$.
\end{proof}

\begin{proposition}[$\S(\C)\cap \U(\C)=\SU(\C)$]\label{SU=SU}
Let $\C$ be a pointed regular category. An object $Y$ of $\C$ is
strongly unital if and only if it is unital and subtractive.
\end{proposition}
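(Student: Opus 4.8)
The plan is to prove the biconditional by establishing two implications, using the characterisations already available to us. For the forward direction, I would assume $Y$ is strongly unital and show it is both unital and subtractive. Unitality is immediate: this is precisely Corollary~\ref{strongly unital implies unital}, already proved above via Propositions~\ref{SU characterisation} and~\ref{U characterisation}. For subtractivity, I would start from an arbitrary split right punctual span as in~\eqref{srps} and invoke condition~(iv) of Proposition~\ref{SU characterisation}, which tells us that $\langle f, g \rangle \colon Z \to X \times Y$ is a strong epimorphism. The goal, by Definition~\ref{subtractive object}, is to show $f\ker(g)$ is a regular epimorphism. Here I would pass to a set-theoretical argument, justified by the Barr embedding theorem exactly as in the proof of Proposition~\ref{subtraction via spans}: given $x \in X$, since $\langle f, g\rangle$ is a (strong, hence regular) epimorphism, the element $(x, 0) \in X \times Y$ is hit by some $z \in Z$, giving $f(z) = x$ and $g(z) = 0$, so $z \in \ker(g)$ and $x$ lies in the image of $f\ker(g)$. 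This shows $f\ker(g)$ is a regular epimorphism.

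For the converse, I would assume $Y$ is unital and subtractive and aim at condition~(iv) of Proposition~\ref{SU characterisation}: given a split right punctual span~\eqref{srps}, show $\langle f, g \rangle \colon Z \to X \times Y$ is a strong epimorphism. The natural strategy is to factor $\langle f, g\rangle$ (or rather its transpose $\langle g, f\rangle$, matching the convention of Proposition~\ref{subtraction via spans}) as a regular epimorphism $p$ followed by a monomorphism, landing in the induced relation $R$, and then to prove that this monomorphism is an isomorphism, so that $\langle f,g\rangle$ itself is a strong epimorphism. The two hypotheses should feed in complementarily: subtractivity of $Y$ gives, via Proposition~\ref{subtraction via spans}, that the span $\langle g, f\rangle$ is subtractive, i.e.\ $f\ker(g)$ is a regular epimorphism, which controls the ``right punctual'' elements $(0, x)$; unitality of $Y$, via Proposition~\ref{U characterisation}, controls the genuinely split punctual data where in addition $gs = 0$.

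The main obstacle I expect is bridging the gap between a split \emph{right} punctual span~\eqref{srps}, where only $ft = 0$ is assumed, and a split \emph{punctual} span~\eqref{ps}, where one also has $gs = 0$: unitality only directly addresses the latter. The idea would be to modify the section $s$ so as to kill the $g$-component, replacing $s$ by $s' = s - \text{(correction)}$ so that $gs' = 0$ while keeping $fs' = 1_X$, thereby producing an honest split punctual span to which Proposition~\ref{U characterisation} applies; subtractivity is precisely what guarantees that the correction term can be carried out, i.e.\ that the ``difference'' needed to straighten the span exists inside $Z$. Concretely, in set-theoretical terms under the Barr embedding, I would argue that every element of $X \times Y$ that lies in the image of $\langle f, g\rangle$-generated subobject decomposes using a right-punctual element $(0,y)$ supplied by subtractivity and a left-punctual element $(x,0)$ supplied by unitality, forcing any monomorphism through which both product inclusions factor to be surjective. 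Assembling these two contributions into a single joint-strong-epimorphicity statement is the delicate point; once it is in place, Proposition~\ref{SU characterisation}(iv) closes the argument.
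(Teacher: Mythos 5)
Your forward direction is fine: unitality is exactly Corollary~\ref{strongly unital implies unital}, and your embedding argument for subtractivity works (the paper gets the same conclusion more directly, by observing that in the commutative diagram comparing $\ker(g)$ with $\ker(\pi_Y)=\langle 1_X,0\rangle$ the left-hand square is a pullback, so that $f\ker(g)$ is a pullback of the regular epimorphism $\langle f,g\rangle$ and hence regular epi --- no embedding needed). The genuine gap is in the converse. Your central idea, to replace the section $s$ by a ``corrected'' section $s'=s-(\text{correction})$ with $gs'=0$ so as to produce a split \emph{punctual} span, is not an available move: in a pointed regular category there is no subtraction of morphisms, and subtractivity of the object $Y$ is a \emph{property} of split right punctual spans (Definition~\ref{subtractive object}), not an operation one can apply to build new morphisms. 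Even in a subtractive variety with subtraction term $d$, the pointwise formula $s'(x)=d(s(x),tgs(x))$ is in general not a homomorphism, so the ``correction'' fails already there. Your set-theoretic fallback also misassigns the roles of the hypotheses: the elements $(x,0)$ in the image of $\langle f,g\rangle$ are supplied by subtractivity (via $f\ker(g)$ being surjective), the elements $(0,y)$ are supplied by the section $t$ (since $\langle f,g\rangle t=\langle 0,1_Y\rangle$), and unitality supplies no elements at all --- it is the principle that a subobject of $X\times Y$ through which both product inclusions factor must be the whole of $X\times Y$.

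The missing insight is that no modification of the span is needed, because unitality should not be invoked through the split-punctual-span condition (iv) of Proposition~\ref{U characterisation}, but through condition (iii) combined with the proposition on pointed categories in Section~\ref{SSP}: a point $(\pi_X,\langle 1_X,0\rangle)$ is strong if and only if the pair $(\ker(\pi_X),\langle 1_X,0\rangle)=(\langle 0,1_Y\rangle,\langle 1_X,0\rangle)$ is jointly strongly epimorphic. So $Y$ unital gives directly that the product inclusions into $X\times Y$ are jointly strongly epimorphic. The paper's assembly is then short: factor $\langle f,g\rangle=mp$ with $p$ a regular epimorphism and $m$ a monomorphism. The section $t$ gives $\langle 0,1_Y\rangle=\langle f,g\rangle t=mpt$, and the kernel gives $\langle 1_X,0\rangle f\ker(g)=\langle f,g\rangle\ker(g)=mp\ker(g)$, so both of these morphisms factor through $m$. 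By subtractivity, $f\ker(g)$ is a regular, hence strong, epimorphism; precomposing one leg of a jointly strongly epimorphic pair with a strong epimorphism again yields a jointly strongly epimorphic pair (equivalently, the diagonal fill-in of $\langle 1_X,0\rangle f\ker(g)=mp\ker(g)$ against the monomorphism $m$ factors $\langle 1_X,0\rangle$ itself through $m$). Hence the pair $(\langle 1_X,0\rangle f\ker(g),\langle 0,1_Y\rangle)$ is jointly strongly epimorphic and factors through $m$, so $m$ is an isomorphism and $\langle f,g\rangle$ is a regular, hence strong, epimorphism, which is condition (iv) of Proposition~\ref{SU characterisation}. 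Your proposal has all the ingredients on the table but never performs this assembly, and the route it does propose (correcting the section) would fail.
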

\begin{proof}
We already observed that a strongly unital object is unital
(Corollary~\ref{strongly unital implies unital}). To prove that
$Y$ is subtractive, we consider an arbitrary split right punctual
span such as~\eqref{srps}. In the commutative diagram between
kernels
\[
\xymatrix@!0@C=6em@R=5em{ K \ar@{ |>->}[r]^-{\ker(g)} \ar[d]_-{f
\ker(g)} \pullback & Z \ar[d]^-{\langle f,g \rangle} \ar[r]^-{g}
 & Y \ar@{=}[d] \\
 X \ar@{ |>->}[r]_-{\langle 1_X,0 \rangle} & X\times Y \ar[r]_-{\pi_Y} & Y,}
\]
the left square is necessarily a pullback. By Proposition~\ref{SU
characterisation}, $\langle f,g \rangle$ is a regular epimorphism,
hence so is $f\ker(g)$.

Conversely, given a subtractive unital object $Y$ in a split right
punctual span~\eqref{srps}, by Proposition~\ref{SU
characterisation} we must show that the middle morphism $\langle
f,g \rangle$ of the diagram above is a regular epimorphism. Let
$mp$ be its factorisation as a regular epimorphism $p$ followed by
a monomorphism $m$. The pair $( \langle 1_X,0 \rangle,\langle 0,
1_Y \rangle)$ being jointly strongly epimorphic and $f\ker(g)$
being a regular epimorphism, we see that the pair $(\langle 1_X,0
\rangle f\ker(g),\langle 0,1_Y \rangle)$ is jointly strongly
epimorphic; moreover it factors through the monomorphism $m$.
Consequently, $m$ is an isomorphism.
\end{proof}

\begin{corollary}\label{corollary subtractive}
$\S(\Mon)=\GMon$, $\S(\CMon)=\Ab$ and $\S(\SRng)=\Rng$.
\end{corollary}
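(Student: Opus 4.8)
The plan is to reduce all three identities to the already-computed descriptions of strongly unital objects, using Proposition~\ref{SU=SU} as the bridge. The first observation is that each of the three ambient categories is a Jónsson--Tarski variety: $\Mon$, $\CMon$ and $\SRng$ all carry a unique constant $0$ together with a binary operation $+$ satisfying $0+x = x = x+0$. Hence by Example~\ref{Examples unital} each is a unital category, and Corollary~\ref{5.4} then gives $\U(\Mon) = \Mon$, $\U(\CMon) = \CMon$ and $\U(\SRng) = \SRng$; in other words every object of each category is unital.

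Next I would invoke Proposition~\ref{SU=SU}, which asserts that in a pointed regular category an object is strongly unital if and only if it is unital and subtractive, i.e. $\SU(\C) = \U(\C) \cap \S(\C)$. Since each of the three categories is a variety of universal algebras (hence pointed and regular) with $\U(\C) = \C$, this intersection collapses: $\S(\C) = \SU(\C)$ for $\C \in \{\Mon, \CMon, \SRng\}$. It therefore suffices to identify the strongly unital objects in each case.

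For $\Mon$ and $\SRng$ the work is already done. Example~\ref{Greg} gives $\SU(\Mon) = \GMon$ and Theorem~\ref{SU(SRng)=Rng} gives $\SU(\SRng) = \Rng$, so the first and third equalities follow at once. The only genuinely new point is $\SU(\CMon) = \Ab$. Here I would use that $\CMon$ is a regular unital category, so by the remark following Proposition~\ref{4.8} its strongly unital objects coincide with its gregarious objects. A commutative monoid $Y$ is gregarious precisely when for every $y \in Y$ there exist $u$, $v$ with $u + y + v = 0$; by commutativity this reads $y + (u+v) = 0$, so every element of $Y$ is invertible and $Y$ is an abelian group. Conversely any abelian group is gregarious, taking $u = 0$ and $v = -y$, hence strongly unital. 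This yields $\SU(\CMon) = \Ab$.

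I expect no serious obstacle: the two quoted identities carry the weight, and the $\CMon$ case reduces to the elementary fact that a commutative gregarious monoid is exactly an abelian group. The only point demanding a little care is verifying the hypotheses of Proposition~\ref{SU=SU} and of the gregarious characterisation, namely that all three categories are pointed and regular; this holds because each is a variety of universal algebras.
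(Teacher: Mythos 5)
Your proposal is correct and follows essentially the same route as the paper: the paper's (one-sentence) proof is precisely the combination of Example~\ref{Examples unital} (unitality of the three categories, which via Proposition~\ref{SU=SU} collapses $\S(\C)$ to $\SU(\C)$) with Example~\ref{Greg} for $\Mon$ and Theorem~\ref{SU(SRng)=Rng} for $\SRng$. The only divergence is the middle identity $\S(\CMon)=\Ab$: where you argue by hand, the paper simply cites \cite[Example~1.9.4]{Borceux-Bourn} together with Proposition~\ref{SU characterisation} and the remark following Proposition~\ref{4.8}. Your elementary argument is sound in substance, but one step deserves more care than you give it: the element-wise characterisation of gregariousness you invoke (for every $y$ there exist $u$, $v$ with $u+y+v=0$) is Proposition~1.9.2 of \cite{Borceux-Bourn}, which is a statement about gregarious objects \emph{of $\Mon$}. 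Gregariousness is relative to the ambient category, and being a gregarious object of $\CMon$ is a priori weaker, since condition (iv) of Proposition~\ref{SU characterisation} then quantifies only over split right punctual spans of \emph{commutative} monoids; so the implication ``gregarious in $\CMon$ $\Rightarrow$ every element invertible'' does not follow formally from the $\Mon$-characterisation. It does, however, admit a three-line $\CMon$-internal proof: apply condition (iv) to the span with $Z=Y\times Y$, $f=\pi_1$, $s=\langle 1_Y,0\rangle$, $t=\langle 0,1_Y\rangle$ and $g$ the addition map $(a,b)\mapsto a+b$ (a homomorphism precisely because $Y$ is commutative); surjectivity of $\langle f,g\rangle$ at the element $(y,0)$ produces $b$ with $y+b=0$. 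The converse direction of your argument is fine, since a span in $\CMon$ is in particular a span in $\Mon$ and strong epimorphisms are surjections in both. With this small patch (or by citing \cite[Example~1.9.4]{Borceux-Bourn}, as the paper does) your proof is complete.
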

\begin{proof}
This is a combination of Examples~\ref{Examples unital} with,
respectively, Example~\ref{Greg}; \cite[Example
1.9.4]{Borceux-Bourn} with Proposition~\ref{SU characterisation}
and the remark following Proposition~\ref{4.8}; and
Theorem~\ref{SU(SRng)=Rng}.
\end{proof}

\begin{example}
Groups are (strongly) unital objects in the category $\Sub$ of
subtraction algebras (Example~\ref{only strong}). In fact, if for
every $y\in Y$ there is a $y^{*}\in Y$ such that $s(0,y^{*})=y$,
then $Y$ is a unital object; in particular, any group is unital.
To see this, we must prove that for any subtraction algebra $X$,
the pair
\[
(\langle1_{X},0\rangle\colon X\to X\times Y,\quad
\langle0,1_{Y}\rangle\colon Y\to X\times Y)
\]
is jointly strongly epimorphic. This follows from the fact that
\[ s((x,0), (0, y^{*})) = (s(x,0), s(0, y^{*})) = (x,y)
\]
for all $x\in X$ and $y\in Y$. Note that the inclusion $\Gp\subset
\SU(\Sub)$ is strict, because the three-element subtraction
algebra
\begin{center}
\begin{tabular}{c|ccccc}
 $s$ & $0$ & $1$ & $2$ \\
\hline
 $0$ & $0$ & $1$ & $2$ \\
 $1$ & $1$ & $0$ & $0$ \\
 $2$ & $2$ & $0$ & $0$
\end{tabular}
\end{center}
satisfies the condition on the existence of $y^{*}$. However, it
is not a group, since the unique group of order three has a
different induced subtraction.
\end{example}

\begin{proposition}
Let $\C$ be a pointed regular category. Then $\S(\C)$ is closed
under quotients in~$\C$.
\end{proposition}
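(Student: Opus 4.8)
The plan is to argue set-theoretically, exactly as in the proof of Proposition~\ref{subtraction via spans}: by the Barr embedding theorem~\cite{Barr} together with Metatheorem~A.5.7 in~\cite{Borceux-Bourn}, it suffices to establish the implication for sets, so I may treat the objects as sets, the regular epimorphism $q\colon Y\to Y'$ as a surjection, and use that $q(0_Y)=0_{Y'}$. By Definition~\ref{subtractive object} and Proposition~\ref{subtraction via spans}, saying that $Y'$ is subtractive amounts to the following: for every split right punctual span~\eqref{srps} over $Y'$, say $X'\rightleftarrows Z'\rightleftarrows Y'$ with structure maps $f'$, $s'$, $g'$, $t'$, and for every $x'\in X'$, there exists $z'\in Z'$ with $g'(z')=0$ and $f'(z')=x'$.

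First I would reduce the span over $Y'$ to one over $Y$ by pulling back along $q$. Concretely, set $Z:=Y\times_{Y'}Z'$ with projections $g:=\pi_Y$ and $\bar q:=\pi_{Z'}$; the latter is a regular epimorphism since it is a pullback of $q$. Put $t:=\langle 1_Y,t'q\rangle\colon Y\to Z$ and $f:=f'\bar q\colon Z\to X'$, so that $gt=1_Y$ and $ft=f't'q=0$. The only missing ingredient of a split right punctual span is a section of $f$; here I would use the surjectivity of $q$ to choose, for each $x'\in X'$, an element $\lambda(x')\in Y$ with $q(\lambda(x'))=g's'(x')$, and set $s(x'):=(\lambda(x'),s'(x'))\in Z$. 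This is well defined and satisfies $fs=1_{X'}$, so that $(f,s,g,t)$ is a split right punctual span over $Y$ with base $X'$.

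Next I would invoke the hypothesis that $Y$ is subtractive, applied to this span: for every $x'\in X'$ there is $(y,z')\in Z$ with $g(y,z')=y=0_Y$ and $f(y,z')=f'(z')=x'$. Since $q(y)=g'(z')$ by construction of $Z$, the first equation forces $g'(z')=q(0_Y)=0_{Y'}$, while the second gives $f'(z')=x'$. This is exactly the witness required above, so $Y'$ is subtractive.

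The one delicate point is the construction of the section $s$: there is no canonical retraction of $f=f'\bar q$, because a natural choice would require the identity $g'=g's'f'$ on $Z'$, which fails in general (the two split epimorphisms $f'$ and $g'$ out of $Z'$ need not be compatible). This is precisely why the argument is carried out set-theoretically, where the surjectivity of $q$ lets me select the preimages $\lambda(x')$; equivalently, one passes to a regular epimorphism cover of the base and uses that regular epimorphisms descend. Everything else---the two pullback identities and the fact that $\bar q$ restricts to an isomorphism between $\ker(g)$ and $\ker(g')$, which is what makes the witness descend along $q$---is routine.
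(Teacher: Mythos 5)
There is a genuine gap: the reduction to $\mathsf{Set}$ via the Barr embedding is not legitimate for this statement. Metatheorem~A.5.7 in~\cite{Borceux-Bourn} applies to properties of a \emph{fixed finite diagram}, expressible through commutativity, monomorphisms, regular epimorphisms and finite limits; that is why it can be invoked in Proposition~\ref{subtraction via spans}, where both hypothesis and conclusion concern one given span. Here, by contrast, the hypothesis ``$Y$ is subtractive'' quantifies over \emph{all} split right punctual spans in $\C$, and your argument applies it to a span that you \emph{manufacture} using a choice function $\lambda$ selecting $q$-preimages. Such a $\lambda$ is not a morphism of $\C$ (nor even of the functor category in which the Barr embedding lands, since these choices cannot be made naturally), so the span $(f,s,g,t)$ you construct simply does not exist in $\C$, and the subtractivity of $Y$ cannot be invoked for it. Element-wise reasoning is sound for verifying that a given arrow of $\C$ is a regular epimorphism, but not for building new arrows of $\C$ by arbitrary choice and then feeding them into a hypothesis that ranges only over diagrams of~$\C$.

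Your closing remark that ``equivalently, one passes to a regular epimorphism cover of the base'' is indeed the correct repair, but it is not routine: it is precisely the content of the paper's proof, and it is the part your proposal omits. In your notation, one replaces the base $X'$ by the cover $X''=X'\times_{Z'}Z$ obtained by pulling back the regular epimorphism $\bar q\colon Z\to Z'$ along the section $s'\colon X'\to Z'$; over $X''$ a \emph{canonical} section exists (no choice is needed), and after a second pullback of $\langle f,g\rangle\colon Z\to X'\times Y$ along $(X''\twoheadrightarrow X')\times 1_Y$ one obtains a genuine split right punctual span over $Y$ in $\C$ with base $X''$. Subtractivity of $Y$ applies to that span, and the conclusion descends along the regular epimorphism $X''\twoheadrightarrow X'$ by composing with it and using the identification of the kernel of the pulled-back projection with $\ker(g')$ (a point you did note). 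The steps you do carry out---the pullback $Z=Y\times_{Y'}Z'$, the maps $t$ and $f$, and the kernel comparison---match the paper's, but the double-pullback construction above is the heart of the proof rather than a routine verification.
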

\begin{proof}
Suppose that $Y$ is a subtractive object in $\C$ and consider a
regular epimorphism $w\colon{Y\to W}$. To prove that $W$ is also
subtractive, consider a split right punctual span
$$
\xymatrix@!0@=4em{ X \ar@<.5ex>[r]^s & Z \ar@<.5ex>[l]^f
 \ar@<-.5ex>[r]_g & W; \ar@<-.5ex>[l]_t }
$$
we must prove that $f\ker(g)$ is a regular epimorphism. Consider
the following diagram where all squares are pullbacks:
$$
\xymatrix@!0@C=6em@R=4em{& X' \ar@{.>}[ld]_-{s''} \ar@{>>}[r]^-x \ar@{ >->}[d]_-{s'} \pullback & X \ar@{ >->}[d]^-s \\
 Z'' \pullback \ar@{>>}[r]^-{z'} \ar[d]_-{\langle f'', g'' \rangle} & Z' \pullback \ar@{>>}[r]^-z \ar[d]_-{\langle f', g' \rangle} & Z \ar[d]^-{\langle f, g \rangle} \\
 X'\times Y \ar@{>>}[r]_-{x\times 1_Y} & X\times Y \ar@{>>}[r]_-{1_X\times w} & X\times W.}
$$
Note that from the bottom right pullback we can deduce that the
pullback of $g$ along $w$ is $g'$. Since $f's'=x$, there is an
induced morphism $s''\colon X'\to Z''$ such that $\langle f'', g''
\rangle s''=\langle 1_{X'}, g's' \rangle$ and $z's''=s'$. There is
also an induced morphism $t''\colon Y\to Z''$ such that $\langle
f'', g'' \rangle t''=\langle 0,1_Y \rangle$ and $zz't''=tw$. So,
we get a split right punctual span
$$
\xymatrix@!0@=4em{ X' \ar@<.5ex>[r]^{s''} & Z''
\ar@<.5ex>[l]^{f''}
 \ar@<-.5ex>[r]_{g''} & Y, \ar@<-.5ex>[l]_{t''} }
$$
so that $f''\ker(g'')$ is a regular epimorphism, by assumption.
Since $g'$ is a pullback of $g$ and $g''=g'z'$, we have the
commutative diagram
\[
\xymatrix@!0@C=6em@R=4em{K'' \pullback \ar@{ |>->}[r]^-{\ker(g'')} \ar@{.>}[d]_-{\lambda} & Z'' \ar@{>>}[d]^(.25){z'} \\
 K \pullback \ar@{ |>->}[r]_-{\ker(g')} \ar[d] \ar@(ur,ul)@/^1.75pc/[rr]|(.5)\hole^(0.75){\ker(g)} & Z' \ar[d]^-{g'} \pullback \ar@{>>}[r]^-z & Z \ar[d]^-g \\
 0 \ar[r] & Y \ar@{>>}[r]_-w & W}
\]
between their kernels. Finally, the morphism $xf''\ker(g'')$ is a
regular epimorphism (since both $x$ and $f''\ker(g'')$ are) and
from
\[
xf''\ker(g'') = fzz'\ker(g'')=fz\ker(g')\lambda=f\ker(g)\lambda
\]
we conclude that $f\ker(g)$ is a regular epimorphism, as desired.
\end{proof}

In the presence of binary coproducts, a pointed regular category
$\C$ is subtractive if and only if any split right punctual span
of the form
$$
\xymatrix@!0@=5em{ X \ar@<.5ex>[r]^-{\iota_1} & X+X
\ar@<.5ex>[l]^-{\links 1_X\; 0 \rechts}
 \ar@<-.5ex>[r]_-{\links 1_X\;1_X \rechts} & X \ar@<-.5ex>[l]_-{\iota_2} }
$$
is such that $\delta_X=\links 1_X\; 0 \rechts \ker(\links 1_X\;
1_X \rechts)$ is a regular epimorphism (see Theorem~5.1
in~\cite{DB-ZJ-2009}). This result leads us to the following
characterisation, where an extra morphism $f$ appears as in
Proposition~\ref{SU characterisation}, to be compatible with the
pullback-stability in the definitions of unital and strongly
unital objects.

\begin{proposition} In a pointed regular category $\C$ with binary coproducts the following conditions are equivalent:
\begin{tfae}
 \item an object $Y$ in $\C$ is subtractive;
 \item for any morphism $f\colon X\to Y$, the split right punctual span
 $$ \xymatrix@!0@=5em{ X \ar@<.5ex>[r]^-{\iota_X} & X+Y \ar@<.5ex>[l]^-{\links 1_X\; 0 \rechts}
 \ar@<-.5ex>[r]_-{\links f\; 1_Y \rechts} & Y \ar@<-.5ex>[l]_-{\iota_Y} }
 $$
 is such that $\delta_f=\links 1_X\; 0 \rechts \ker(\links f\;1_Y \rechts)$ is a regular epimorphism.
\end{tfae}
\end{proposition}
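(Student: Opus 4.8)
The plan is to dispatch (i)~$\To$~(ii) immediately and to concentrate the effort on (ii)~$\To$~(i) by exhibiting the coproduct span as the \emph{generic} split right punctual span over the data $X$, $Y$. For (i)~$\To$~(ii) one checks, from the coproduct relations $\links 1_X\;0\rechts\iota_X=1_X$, $\links f\;1_Y\rechts\iota_Y=1_Y$ and $\links 1_X\;0\rechts\iota_Y=0$, that the displayed diagram is a split right punctual span in the sense of~\eqref{srps}. Hence, if $Y$ is subtractive in the sense of Definition~\ref{subtractive object}, the assertion that $\delta_f=\links 1_X\;0\rechts\ker(\links f\;1_Y\rechts)$ is a regular epimorphism is just that definition specialised to this particular span.

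For the converse (ii)~$\To$~(i), I would start from an arbitrary split right punctual span as in~\eqref{srps}, with legs $f\colon Z\to X$ and $g\colon Z\to Y$ and splittings $s$, $t$, and aim to prove that $f\ker(g)$ is a regular epimorphism. The key move is to introduce the comparison morphism $\phi=\links s\;t\rechts\colon X+Y\to Z$. Using $fs=1_X$, $ft=0$ and $gt=1_Y$, one computes $f\phi=\links 1_X\;0\rechts$ and $g\phi=\links gs\;1_Y\rechts$; thus $\phi$ is a morphism of split right punctual spans from the coproduct span built on the morphism $gs\colon X\to Y$ to the given span. The point of allowing an arbitrary morphism in~(ii) is precisely to make this choice $f=gs$ available.

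It then remains to compare kernels. Since $g\phi\,\ker(\links gs\;1_Y\rechts)=\links gs\;1_Y\rechts\ker(\links gs\;1_Y\rechts)=0$, the map $\phi\,\ker(\links gs\;1_Y\rechts)$ factors uniquely through $\ker(g)$, say as $\ker(g)\psi$. Composing on the left with $f$ and using $f\phi=\links 1_X\;0\rechts$ yields the single identity
\[
f\ker(g)\,\psi \;=\; f\phi\,\ker(\links gs\;1_Y\rechts) \;=\; \links 1_X\;0\rechts\ker(\links gs\;1_Y\rechts) \;=\; \delta_{gs},
\]
whose right-hand side is a regular epimorphism by hypothesis~(ii) applied to $gs$. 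I expect the one genuinely load-bearing step to be this final cancellation: in a regular category regular epimorphisms coincide with strong epimorphisms, and strong epimorphisms cancel on the right, in the sense that if a composite $a\psi$ is a strong epimorphism then its last factor $a$ is one (a routine diagonal-filling argument). Applying this to $f\ker(g)\,\psi=\delta_{gs}$ gives that $f\ker(g)$ is a regular epimorphism, exactly as subtractivity of $Y$ demands. This is the object-wise refinement of Theorem~5.1 in~\cite{DB-ZJ-2009}, the morphism $gs$ being what lets the bare coproduct span of that result accommodate an arbitrary span with codomain $Y$.
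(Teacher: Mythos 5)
Your proof is correct and follows essentially the same route as the paper's: both apply hypothesis (ii) to the morphism $gs$, use the comparison map $\links s\;t\rechts\colon X+Y\to Z$ (your $\phi$) to induce a morphism between the kernels of $\links gs\;1_Y\rechts$ and $g$ (your $\psi$, the paper's $\sigma$), and conclude via the identity $f\ker(g)\,\psi=\delta_{gs}$ together with right-cancellation of regular (strong) epimorphisms. No differences worth noting.
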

\begin{proof}
The implication (i) $\Rightarrow$ (ii) is obvious. Conversely,
given any split right punctual span~\eqref{srps}, we have a
morphism $gs\colon X\to Y$, so for the split right punctual span
$$ \xymatrix@!0@=5em{ X \ar@<.5ex>[r]^-{\iota_X} & X+Y \ar@<.5ex>[l]^-{\links 1_X\; 0 \rechts}
 \ar@<-.5ex>[r]_-{\links gs\; 1_Y \rechts} & Y \ar@<-.5ex>[l]_-{\iota_Y} }
$$
we have that $\delta_{gs}=\links 1_X\;0 \rechts \ker(\links
gs\;1_Y \rechts)$ is a regular epimorphism. The induced morphism
$\sigma$ between kernels in the diagram
\[
\xymatrix@!0@C=7em@R=5em{ K \ar@{ |>->}[r]^-{\ker(\links gs\;1_Y \rechts)} \ar@{.>}[d]_-{\sigma} \pullback & X+Y \ar[d]^-{\links s\; t\rechts} \ar[r]^-{\links gs\;1_Y \rechts} & Y \ar@{=}[d] \\
 K_g \ar@{ |>->}[r]_-{\ker(g)} & Z \ar[r]_-{g} & Y}
\]
is such that $f\ker(g)\sigma = f\links s\; t \rechts \ker(\links
gs\;1_Y \rechts)=\delta_{gs}$ is a regular epimorphism;
consequently, $f\ker(g)$ is a regular epimorphism as well.
\end{proof}



\section{Mal'tsev objects}\label{MO}
Even though the concept of a strongly unital object is strong
enough to characterise rings amongst semirings as in
Theorem~\ref{SU(SRng)=Rng}, it fails to give us a characterisation
of groups amongst monoids. For that purpose we need a stronger
concept. The aim of the present section is two-fold: first to
introduce Mal'tsev objects, then to prove that any Mal'tsev object
in the category of monoids is a group (Theorem~\ref{Mal'tsev
monoids are groups}). In fact, also the opposite inclusion holds:
groups are precisely the Mal'tsev monoids. This follows from the
results in the next section, where the even stronger concept of a
protomodular object is introduced.

\begin{definition} \label{definition Mal'tsev objects}
We say that an object $Y$ of a finitely complete category $\C$ is
a
\defn{Mal'tsev object} if the category $\Pt_Y(\C)$ is unital.

As explained after Proposition~\ref{Mal'tsev via fibres}, this
means that for every pullback of split epimorphisms over $Y$ as
in~\eqref{pb of split epis}, the morphisms $\langle 1_{A}, tf
\rangle$ and $\langle sg, 1_{C} \rangle$ are jointly strongly
epimorphic.
\end{definition}

We write $\M(\C)$ for the full subcategory of $\C$ determined by
the Mal'tsev objects.

\begin{proposition}\label{double split epi Mal'tsev}
Let $\C$ be a regular category. For any object $Y$ in $\C$, the
following conditions are equivalent:
\begin{tfae}
\item $Y$ is a Mal'tsev object; \item every double split
epimorphism
\[
\xymatrix@!0@=4em{ D \ar@<-.5ex>[d]_{g'} \ar@<-.5ex>[r]_{f'} & C
\ar@<-.5ex>[d]_g
\ar@<-.5ex>[l]_-{s'} \\
A \ar@<-.5ex>[u]_{t'} \ar@<-.5ex>[r]_f & Y \ar@<-.5ex>[l]_s
\ar@<-.5ex>[u]_t }
\]
over $Y$ is a regular pushout; \item every double split
epimorphism over $Y$ as above, in which $f'$ and $g'$ are jointly
monomorphic, is a pullback.
\end{tfae}
\end{proposition}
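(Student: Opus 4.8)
The plan is to prove the cycle of implications (i) $\To$ (ii) $\To$ (iii) $\To$ (i), using throughout that the binary product of the two points $(f,s)$ and $(g,t)$ over $Y$ in the fibre $\Pt_Y(\C)$ is precisely the pullback $A\times_Y C$ of~\eqref{pb of split epis}, and that, by definition, $Y$ being a Mal'tsev object means exactly that its two product inclusions $\langle 1_A, tf\rangle$ and $\langle sg, 1_C\rangle$ are jointly strongly epimorphic for all such products.

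For (i) $\To$ (ii) I would start from a double split epimorphism over $Y$ and observe that the four maps $f$, $g$, $f'$, $g'$ are split, hence regular, epimorphisms; so the only thing left to verify for a regular pushout is that the comparison morphism $\langle g',f'\rangle\colon D\to A\times_Y C$ is a regular epimorphism. Factoring it as a regular epimorphism $p\colon D\to M$ followed by a monomorphism $m\colon M\to A\times_Y C$, the commutativity relations of the double split epimorphism give $\langle g',f'\rangle t' = \langle 1_A, tf\rangle$ (using $g't'=1_A$ and $f't'=tf$) and $\langle g',f'\rangle s' = \langle sg, 1_C\rangle$ (using $g's'=sg$ and $f's'=1_C$), so both product inclusions factor through $m$. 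Since $Y$ is a Mal'tsev object these inclusions are jointly strongly epimorphic, forcing $m$ to be an isomorphism, whence $\langle g',f'\rangle$ is a regular epimorphism. This is essentially the argument of Lemma~\ref{Lemma Double}, now driven by the unitality of $\Pt_Y(\C)$ rather than by stable strongness of $(g,t)$. The implication (ii) $\To$ (iii) is then immediate: if $f'$ and $g'$ are jointly monomorphic then $\langle g',f'\rangle$ is a monomorphism, and by (ii) it is also a regular epimorphism, hence an isomorphism, so the square is a pullback.

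For the remaining implication (iii) $\To$ (i) I would verify the defining condition of a Mal'tsev object directly. Suppose both product inclusions $\langle 1_A, tf\rangle$ and $\langle sg, 1_C\rangle$ factor through a monomorphism $m\colon M\to A\times_Y C$, say via $t''\colon A\to M$ and $s''\colon C\to M$. Setting $f'':=\pi_C m$ and $g'':=\pi_A m$, a routine check of the identities $f''s''=1_C$, $g''t''=1_A$, $gf''=fg''$, $g''s''=sg$, $f''t''=tf$, together with $s''t=t''s$ (the last one obtained by precomposing with the monomorphism $m$ and using $gt=1_Y$, $fs=1_Y$), shows that $M$ organises into a double split epimorphism over $Y$. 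Its comparison morphism is $\langle g'', f''\rangle = m$, a monomorphism, so $f''$ and $g''$ are jointly monomorphic; by (iii) the square is a pullback, i.e.\ $m$ is an isomorphism. Hence the two inclusions are jointly strongly epimorphic and $Y$ is a Mal'tsev object.

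I expect the main obstacle to be precisely this last step: the bookkeeping required to promote the subobject $M$ through which the inclusions factor into a bona fide double split epimorphism over $Y$, checking each commutativity, and then recognising that its comparison morphism is the very monomorphism $m$ we started with—after which hypothesis (iii) applies verbatim. The other two implications are comparatively formal, resting only on the split-epimorphisms-are-regular-epimorphisms remark and on the fact that a morphism which is simultaneously a monomorphism and a regular epimorphism is an isomorphism.
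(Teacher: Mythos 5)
Your proposal is correct and follows essentially the same route as the paper: the factorisation argument for (i) $\To$ (ii), the observation that a jointly monomorphic comparison which is also a regular epimorphism is an isomorphism, and the construction promoting the subobject $M$ through which the two product inclusions factor into a double split epimorphism over $Y$ whose comparison morphism is the monomorphism $m$ itself. The only (inessential) difference is organisational — you close the cycle via (iii) where the paper proves (ii) $\To$ (i) directly and notes (ii) $\Leftrightarrow$ (iii) is immediate — and you spell out the verification of the identity $s''t = t''s$ by cancelling the monomorphism $m$, which the paper leaves implicit.
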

\begin{proof}
The equivalence between (ii) and (iii) is immediate.

(i) $\Rightarrow$ (ii). Consider a double split epimorphism over
$Y$ as above. We want to prove that the comparison morphism
$\langle g', f' \rangle \colon D \to A\times_Y C$ is a regular
epimorphism. Suppose that $\langle g',f' \rangle =me$ is its
factorisation as a regular epimorphism followed by a monomorphism.
We obtain the commutative diagram
\[
\xymatrix@!0@C=5em@R=4em{ & M \ar@{ >->}[d]^- m \\
 A \ar[r]_-{\langle 1_A, tf\rangle} \ar[ur]^-{et'} & A\times_Y C & C. \ar[l]^-{\langle sg,1_C\rangle} \ar[ul]_-{es'}}
 \]
By assumption $(\langle 1_A, tf\rangle, \langle sg,1_C\rangle)$ is
jointly strongly epimorphic, which proves that~$m$ is an
isomorphism and, consequently, $\langle g',f' \rangle$ is a
regular epimorphism.

(ii) $\Rightarrow$ (i). Consider a pullback of split epimorphisms
\eqref{pb of split epis} and a monomorphism~$m$ such that $\langle
1_A,tf \rangle$ and $\langle sg,1_C \rangle$ factor through $m$
$$
 \xymatrix@!0@C=5em@R=4em{ & M \ar@{ >->}[d]^- m \\
 A \ar[r]_-{\langle 1_A, tf\rangle} \ar[ur]^-{a} & A\times_Y C & C. \ar[l]^-{\langle sg,1_C\rangle} \ar[ul]_-{c}}
$$
We obtain a double split epimorphism over $Y$ given by
\[
\xymatrix@!0@=5em{ M \ar@<-.5ex>[d]_{\pi_A m}
\ar@<-.5ex>[r]_{\pi_C m} & C \ar@<-.5ex>[d]_g
\ar@<-.5ex>[l]_-{c} \\
A \ar@<-.5ex>[u]_{a} \ar@<-.5ex>[r]_f & Y, \ar@<-.5ex>[l]_s
\ar@<-.5ex>[u]_t }
\]
whose comparison morphism to the pullback of $f$ and $g$ is
$m\colon {M \to A\times_Y C}$. By assumption, $m$ is a regular
epimorphism, hence it is an isomorphism.
\end{proof}

\begin{proposition} \label{Mal'tsev implies strongly unital}
Let $\C$ be a pointed regular category. Every Mal'tsev object in
$\C$ is a strongly unital object.
\end{proposition}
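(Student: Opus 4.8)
The plan is to combine Proposition~\ref{SU=SU} with the object-wise characterisations, reducing the statement to two implications: that a Mal'tsev object is unital and that it is subtractive.

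First I would reformulate the goal. By Proposition~\ref{SU characterisation}, $Y$ is strongly unital precisely when, for every split right punctual span~\eqref{srps}, the comparison $\langle f,g\rangle\colon Z\to X\times Y$ is a strong (equivalently, since $\C$ is regular, regular) epimorphism. The key structural remark is that in such a span the leg $g\colon Z\to Y$ is a split epimorphism with section $t$, so $(g,t)$ is a genuine point over $Y$ and $\langle f,g\rangle$ is a morphism in $\Pt_Y(\C)$ from $(g,t)$ to $(\pi_Y\colon X\times Y\to Y,\langle 0,1_Y\rangle)$ whose base component is $1_Y$; restricting it to the kernels of $g$ and of $\pi_Y$ yields exactly the morphism $f\ker(g)\colon\ker(g)\to X$. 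This is what ties condition~(iv) of Proposition~\ref{SU characterisation} to the subtractive condition of Definition~\ref{subtractive object} and makes Proposition~\ref{SU=SU} the natural engine: it suffices to prove that $Y$ is both unital and subtractive.

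For unitality I would verify condition~(iv) of Proposition~\ref{U characterisation} on a split punctual span, and for subtractivity I would show that $f\ker(g)$ is a regular epimorphism for every split right punctual span. In both cases the hypothesis enters through the point $(g,t)$ over $Y$: using that $Y$ is a Mal'tsev object, I would feed a double split epimorphism built over $Y$ from the span into Proposition~\ref{double split epi Mal'tsev}, obtain a regular pushout, and read off the desired regular epimorphism from its comparison map; alternatively one argues directly from the unitality of the fibre $\Pt_Y(\C)$ via a pullback of split epimorphisms as in~\eqref{pb of split epis}. The transfer of the joint-strong-epimorphism conclusion from the product in $\Pt_Y(\C)$ to $X\times Y$ is handled by the stability of strong epimorphisms under composition with, and cancellation by, regular epimorphisms, exactly as in the quotient arguments of Proposition~\ref{stably strong points closed under quotients}.

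The hard part is precisely this transfer. Because $\langle f,g\rangle$ (and $f\ker(g)$) are not a priori split, they cannot serve directly as a structural leg or as the comparison arrow of a double split epimorphism over $Y$, whose legs are split by definition; moreover a merely right punctual span has $gs\neq 0$, so $Z$ does not split symmetrically over $X$ and $Y$, which blocks the naive constructions. The essential work is therefore to carry the regularity indirectly along the split leg $g$ and the unitality of $\Pt_Y(\C)$, the crucial instance being the subtractive implication that $f\ker(g)$ is a regular epimorphism; once this is secured, unitality together with Proposition~\ref{SU=SU} delivers that $Y$ is strongly unital.
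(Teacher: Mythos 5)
Your reduction via Proposition~\ref{SU=SU} is legitimate, and your diagnosis of where the difficulty lies is accurate; but the proposal stops exactly there. Both halves of your plan---that a Mal'tsev object is unital and that it is subtractive---rest on the same unexecuted step: producing, from a split (right) punctual span, a double split epimorphism over $Y$ to which Proposition~\ref{double split epi Mal'tsev} can be applied, and then transferring the regular-pushout conclusion to the non-split comparison $\langle f,g\rangle$ (equivalently, to $f\ker(g)$). You correctly observe that $\langle f,g\rangle$ cannot itself serve as a leg or as the comparison of such a double split epimorphism, and you then declare this transfer to be ``the essential work'' without performing it. Since, as you yourself note, the whole force of the Mal'tsev hypothesis must pass through this step, what is missing is not a routine verification but the entire substance of the proof.

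The missing device is the one the paper uses: regard $f\colon Z\to X$ and $!_Y\colon Y\to 0$ as a regular epimorphism of points from $(g\colon Z\to Y,\,t)$ to the point over $0$, and take kernel pairs \emph{horizontally}. The resulting left-hand square, formed by the first projections $\Eq(f)\to Z$ and $\Eq(!_Y)=Y\times Y\to Y$ together with the induced map $g'\colon\Eq(f)\to Y\times Y$, \emph{is} a double split epimorphism over $Y$: the kernel-pair projections are split by the reflexivity diagonals, and $g'$ is split by $t\times t$, which lands in $\Eq(f)$ precisely because $ft=0$. Proposition~\ref{double split epi Mal'tsev} makes this left square a regular pushout, and Lemma~\ref{Bourn Lemma} then descends the regular-pushout property to the right-hand square, i.e., $\langle f,g\rangle\colon Z\to X\times Y$ is a regular, hence strong, epimorphism. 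Note that this verifies condition~(iv) of Proposition~\ref{SU characterisation} for an arbitrary split right punctual span in one stroke, so the detour through unitality, subtractivity and Proposition~\ref{SU=SU} is unnecessary once the kernel-pair argument is available.
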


\begin{proof}
Let $Y$ be a Mal'tsev object. By Proposition~\ref{SU
characterisation}, given a split right punctual span
\[
\xymatrix@!0@=4em{ X \ar@<.5ex>[r]^s & Z \ar@<.5ex>[l]^f
\ar@<-.5ex>[r]_g & Y \ar@<-.5ex>[l]_t }
\]
we need to prove that the the morphism $\langle f, g \rangle
\colon {Z \to X \times Y}$ is a strong epimorphism. Consider the
commutative diagram on the right
\begin{equation*}
\vcenter{\xymatrix@!0@=4em{\Eq(f) \ar@<-1ex>[r]
\ar@<-.5ex>[d]_-{g'} \ar@<1ex>[r]^-{\pi_{1}} & Z \ar[l]
\ar@<-.5ex>[d]_-{g} \ar@{->>}[r]^-{f} & X
\ar@<-.5ex>[d] \\
\Eq(!_{Y}) \ar@<-.5ex>[u]_-{t'} \ar@<-1ex>[r]
\ar@<1ex>[r]^-{\pi_{1}}
 & Y \ar[l] \ar@<-.5ex>[u]_-{t} \ar@{->>}[r]_-{!_{Y}} & 0 \ar@<-.5ex>[u]}}
\end{equation*}
and take kernel pairs to the left. Note that the square on the
right is a regular epimorphism of points. Since $Y$ is a Mal'tsev
object, by Proposition~\ref{double split epi Mal'tsev} the double
split epimorphism of first (or second) projections on the left is
a regular pushout. Lemma~\ref{Bourn Lemma} tells us that the
square on the right is a regular pushout as well, which means that
the morphism $\langle f, g \rangle \colon {Z \to X \times Y}$ is a
regular, hence a strong, epimorphism.
\end{proof}

For a pointed finitely complete category $\C$, the category
$\SU(\C)$ obviously contains the zero object. By the following
proposition we see that the zero object is not necessarily a
Mal'tsev object. Hence if $\C$ is pointed and regular, but not
unital, then $\M(\C)$ is strictly contained in $\SU(\C)$.

\begin{proposition}
If $\C$ is a pointed finitely complete category, then the zero
object is a Mal'tsev object if and only if $\C$ is unital.
\end{proposition}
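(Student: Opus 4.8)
The plan is to reduce everything to the identification of the fibre $\Pt_0(\C)$ with $\C$ itself. Since the zero object $0$ is both initial and terminal, a point $(f,s)$ over $0$ is completely determined by its domain: the map $f$ into the terminal object and the section $s$ out of the initial object are unique, and $fs=1_0$ holds automatically. Thus sending a point over $0$ to its domain defines an isomorphism of categories $\Pt_0(\C)\cong\C$ which carries the zero object $(1_0,1_0)$ of $\Pt_0(\C)$ to $0$ and, since $A\times_0 C=A\times C$ when $0$ is terminal, carries binary products in $\Pt_0(\C)$ to binary products in $\C$. Consequently $\Pt_0(\C)$ is unital if and only if $\C$ is unital, and the definition of a Mal'tsev object gives the result. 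I would, however, make this explicit by unwinding the reformulation of unitality of a fibre recorded after Proposition~\ref{Mal'tsev via fibres}.

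By that reformulation, $0$ is a Mal'tsev object precisely when, for every pullback of split epimorphisms~\eqref{pb of split epis} with $Y=0$, the pair $\langle 1_A,tf\rangle$, $\langle sg,1_C\rangle$ is jointly strongly epimorphic. The one computation to carry out is the evaluation of this pair over the zero object. Here $f\colon A\to 0$ and $g\colon C\to 0$ are the unique maps to the terminal object, while $s$ and $t$ are the unique maps out of the initial object, so both composites $tf$ and $sg$ vanish; moreover $A\times_0 C=A\times C$. Hence the two comparison morphisms reduce to $\langle 1_A,0\rangle\colon A\to A\times C$ and $\langle 0,1_C\rangle\colon C\to A\times C$, and as $A$ and $C$ range over all objects of $\C$ these are exactly the product diagrams of $\C$; conversely every such product diagram arises as a pullback of split epimorphisms over $0$.

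It then remains to recognise the resulting condition as unitality. The requirement is that, for all objects $A$ and $C$, the pair $\langle 1_A,0\rangle$, $\langle 0,1_C\rangle$ be jointly strongly epimorphic. Since $\langle 0,1_C\rangle$ is the kernel of $\pi_A\colon A\times C\to A$, the characterisation of strong points in the pointed case (a point $(f,s)$ is strong exactly when $(\ker(f),s)$ is jointly strongly epimorphic) identifies this with the assertion that each product point $(\pi_A,\langle 1_A,0\rangle)$ is strong, which is the very definition of a unital category. Reading the chain of equivalences in both directions then yields the statement. The only point needing care is the matching of quantifiers flagged above: that the pullbacks of split epimorphisms over $0$ are precisely the product diagrams of $\C$, so that no instances of the unital condition are lost or gained.
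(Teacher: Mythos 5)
Your proof is correct and follows essentially the same route as the paper: both unwind the Mal'tsev-object condition at $Y=0$, observe that pullbacks of split epimorphisms over the zero object are exactly the binary product diagrams with comparison pair $(\langle 1_A,0\rangle,\langle 0,1_C\rangle)$, and identify the resulting condition with unitality. Your extra details---the isomorphism $\Pt_0(\C)\cong\C$ and the use of the kernel characterisation of strong points to match the paper's definition of unital category---are correct elaborations of steps the paper leaves implicit.
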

\begin{proof}
The zero object $0$ is a Mal'tsev object if and only if, for any
$X$, $Y \in \C$, in the diagram
\[
\xymatrix@!0@=5em{X \times Y \splitsplitpullback
\ar@<-.5ex>[d]_{\pi_X} \ar@<-.5ex>[r]_(.7){\pi_Y} & Y
\ar@<-.5ex>[l]_-{\langle 0, 1_{Y} \rangle}
\ar@<-.5ex>[d] \\
X \ar@<-.5ex>[u]_(.4){\langle 1_{X}, 0 \rangle} \ar@<-.5ex>[r] &
0, \ar@<-.5ex>[l] \ar@<-.5ex>[u] }
\]
the morphisms $\langle 1_{X}, 0 \rangle$ and $\langle 0, 1_{Y}
\rangle$ are jointly strongly epimorphic. This happens if and only
if $\C$ is unital.
\end{proof}

\begin{remark}
By Proposition~\ref{Mal'tsev via fibres}, $\C$ is a Mal'tsev
category if and only if all fibres $\Pt_Y(\C)$ are unital if and
only if they are strongly unital. For a Mal'tsev object $Y$ in a
category $\C$ the fibre $\Pt_Y(\C)$ is unital, but not strongly
unital in general. The previous proposition provides a
counterexample: if $\C=\Mon$ and $Y=0$, then~$Y$ is a Mal'tsev
object, but the category $\Pt_Y(\Mon)=\Mon$ is not strongly
unital~\cite[Example 1.8.2]{Borceux-Bourn}.
\end{remark}

Next we see that some well-known properties which hold for
Mal'tsev categories are still true for Mal'tsev objects.
\begin{proposition}
In a finitely complete category, a reflexive graph whose object of
objects is a Mal'tsev object admits at most one structure of
internal category.
\end{proposition}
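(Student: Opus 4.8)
The plan is to mimic the classical proof that a reflexive graph in a Mal'tsev category carries at most one internal category (indeed at most one internal groupoid) structure, localising every step so that it only uses unitality of the single fibre $\Pt_Y(\C)$, where $Y$ is the object of objects of the graph. Recall that an internal category structure on a reflexive graph
\[
\xymatrix@!0@=4em{ G_1 \ar@<-.6ex>[r]_-d \ar@<.6ex>[r]^-c & G_0 \ar[l]|-e }
\]
consists of a composition morphism $m\colon G_1\times_{G_0}G_1\to G_1$ (defined on the object of composable pairs) satisfying the usual unit and associativity axioms. The key classical fact is that in a Mal'tsev context the composition, if it exists, is \emph{uniquely determined} by the graph, because $m$ is forced to agree with the two projections on the two canonical copies of $G_1$ sitting inside $G_1\times_{G_0}G_1$, and those two copies are jointly strongly epimorphic.

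First I would set $Y=G_0$ and form the object of composable pairs $G_1\times_{G_0}G_1$, built from the two structural maps $d,c\colon G_1\to G_0$. I would then exhibit this object as (the domain of) a point in the fibre $\Pt_Y(\C)$, realising it as a pullback of split epimorphisms over $Y$ exactly as in Diagram~\eqref{pb of split epis}: the two projections of the composable pairs are split epimorphisms with splittings induced by the unit $e$, so the whole configuration lives over $Y=G_0$. The two splittings $\langle 1,\,\cdot\,\rangle$ and $\langle\,\cdot\,,1\rangle$ correspond precisely to the two embeddings of $G_1$ into $G_1\times_{G_0}G_1$ given by inserting an identity arrow on the left or on the right. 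Because $Y$ is a Mal'tsev object, $\Pt_Y(\C)$ is unital, so by the description following Proposition~\ref{Mal'tsev via fibres} these two embeddings form a jointly strongly epimorphic pair.

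Next I would use the unit axioms to pin down the composition on these two subobjects. Any internal-category composition $m$ must satisfy $m\circ(\text{insert identity on left})=\pi_2$ and $m\circ(\text{insert identity on right})=\pi_1$; that is, two candidate compositions $m$ and $m'$ agree after precomposition with each of the two embeddings. Since the pair of embeddings is jointly strongly epimorphic, hence in particular jointly epimorphic (the surrounding category is finitely complete), it follows that $m=m'$. Thus a reflexive graph on a Mal'tsev object admits at most one composition compatible with the unit axioms, and therefore at most one internal category structure.

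The main obstacle I anticipate is purely presentational rather than conceptual: carefully identifying the object of composable pairs with a pullback of split epimorphisms lying in the single fibre $\Pt_Y(\C)$, and checking that the two identity-insertion morphisms really are the two splittings $\langle 1_A,tf\rangle$ and $\langle sg,1_C\rangle$ appearing in~\eqref{pb of split epis}, so that unitality of the fibre applies verbatim. Once that matching is in place, the uniqueness of $m$ follows formally from joint epimorphy, and one checks that the remaining structural data (domain, codomain, identities) are already fixed by the reflexive graph, so no further choices enter. I would note that the associativity axiom plays no role in the uniqueness argument; only the unit laws together with the jointly (strongly) epimorphic pair are needed.
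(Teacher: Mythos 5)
Your proposal is correct and follows essentially the same argument as the paper: realise the object of composable pairs as a pullback of split epimorphisms over the object of objects $Y$, observe that the two identity-insertion morphisms $\langle 1,ec\rangle$ and $\langle ed,1\rangle$ are the splittings of that pullback and hence jointly (strongly) epimorphic since $\Pt_Y(\C)$ is unital, and conclude from the unit laws that any two compositions must coincide. The paper's proof is exactly this, with the unit laws written as $m\langle 1_{X_1},ec\rangle = m\langle ed,1_{X_1}\rangle = 1_{X_1}$.
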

\begin{proof}
Given a reflexive graph
\[
\xymatrix@!0@=4em{ X_1 \ar@<1ex>[r]^d \ar@<-1ex>[r]_c & X \ar[l]|e
}
\]
where $X$ is a Mal'tsev object, let $m \colon {X_2 \to X_1}$ be a
multiplication, where $X_2$ is the object of composable arrows. If
this multiplication endows the graph with a structure of internal
category, then it must be compatible with the identities, which
means that
\begin{equation} \label{internal category}
m \langle 1_{X_1}, ec \rangle = m \langle ed, 1_{X_1} \rangle =
1_{X_1}.
\end{equation}
Considering the pullback
\[
\xymatrix@!0@=5em{ X_2 \splitsplitpullback \ar@<-.5ex>[d]_{\pi_1}
\ar@<-.5ex>[r]_{\pi_2} & X_1 \ar@<-.5ex>[d]_d
\ar@<-.5ex>[l]_-{\langle ed ,1_{X_1} \rangle} \\
X_1 \ar@<-.5ex>[u]_(.4){\langle 1_{X_1}, ec \rangle}
\ar@<-.5ex>[r]_c & X, \ar@<-.5ex>[l]_e \ar@<-.5ex>[u]_e }
\]
we see that $\langle 1_{X_1}, ec \rangle$ and $\langle ed, 1_{X_1}
\rangle$ are jointly (strongly) epimorphic, because~$X$ is a
Mal'tsev object. Then there is at most one morphism $m$ satisfying
the equalities~\eqref{internal category}.
\end{proof}

\begin{proposition}
In a finitely complete category, any reflexive relation on a
Mal'\-tsev object is transitive.
\end{proposition}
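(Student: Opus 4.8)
The plan is to prove that a reflexive relation on a Mal'tsev object is transitive by mimicking the classical proof that Mal'tsev categories have transitive reflexive relations, but carrying it out relative to the single fibre $\Pt_Y(\C)$ over the Mal'tsev object $Y=X$ rather than for the whole category. Let $\langle r_1,r_2\rangle\colon R\to X\times X$ be a reflexive relation on a Mal'tsev object $X$, with reflexivity witnessed by a morphism $\delta\colon X\to R$ satisfying $r_1\delta=r_2\delta=1_X$. The goal is to produce a transitivity morphism, i.e.\ to show that the relation composite $R\circ R$ factors through $R$.

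First I would set up the object of ``composable pairs'' by forming the pullback $R\times_X R$ of $r_2$ along $r_1$, which records pairs $(a,b)$, $(b,c)$ in the relation. The transitivity we want amounts to finding a morphism $R\times_X R\to R$ sending such composable data to a witness for $(a,c)\in R$. The key is to exhibit a suitable double split epimorphism (or a pullback of split epimorphisms) over $X$ and then invoke the unitality of $\Pt_X(\C)$, exactly as in Definition~\ref{definition Mal'tsev objects}: the two canonical sections will be shown to be jointly strongly epimorphic, and this joint strong epimorphicity is precisely what forces the desired factorisation to exist.

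The main structural step is to build the correct commutative diagram. I would consider the projections from $R$ together with the reflexivity section $\delta$, and form the pullback as in Diagram~\eqref{pb of split epis} with the point $(r_1,\delta)$ (or $(r_2,\delta)$) over $X$ playing the role of $(f,s)$. The two section-type morphisms $\langle 1,\delta r\rangle$ and $\langle \delta r,1\rangle$ into this pullback encode the two ways of embedding $R$ into the object of composable pairs — ``$(a,b)$ with $b$ reflexively paired'' and ``reflexively paired with $(b,c)$''. Because $X$ is a Mal'tsev object, Definition~\ref{definition Mal'tsev objects} guarantees these are jointly strongly epimorphic. A transitivity morphism $m$, if it exists, is uniquely determined on each of these two pieces by the reflexivity conditions (it must restrict to the identity-like composites), so joint strong epimorphicity upgrades this ``defined on a covering pair'' to genuine existence and uniqueness, yielding the transitivity arrow and hence showing $R\circ R\subseteq R$.

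The hard part will be arranging the diagram so that the joint strong epimorphism condition applies cleanly and checking that the candidate transitivity morphism is well-defined, rather than merely asserting it; in particular one must verify that the two images of $R$ inside the object of composable pairs do not both factor through a proper subobject through which a putative non-transitive composite would also factor. Since we are only allowed the unitality of $\Pt_X(\C)$ for the one object $X$ — not a global Mal'tsev condition — some care is needed to phrase the argument entirely inside the fibre $\Pt_X(\C)$, using only pullbacks of split epimorphisms over $X$ and the joint strong epimorphicity these provide. Once the diagram is correctly set up, the conclusion follows formally, so the genuine content lies in identifying the right point $(r_i,\delta)$ over $X$ and the two sections of its pullback.
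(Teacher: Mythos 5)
Your setup is the right one, and it matches the argument the paper points to (the proof of Proposition~5.3 of~\cite{S-proto}): form the pullback $R\times_X R$ of the points $(r_2,\delta)$ and $(r_1,\delta)$ in the fibre $\Pt_X(\C)$, and use unitality of that fibre to conclude that the two sections $\langle 1_R,\delta r_2\rangle$ and $\langle \delta r_1,1_R\rangle$ are jointly strongly epimorphic. The gap is in how you conclude. You claim that a transitivity morphism is determined on each of the two ``pieces'' and that joint strong epimorphicity ``upgrades this to genuine existence''. That is not a valid principle: a jointly strongly epimorphic pair detects isomorphisms among monomorphisms through which both legs factor; it does not allow one to \emph{construct} a morphism out of $R\times_X R$ from prescribed composites with the two sections. (If it did, every jointly strongly epimorphic pair would behave like a coproduct cocone; already in $\Gp$ the product inclusions $G\to G\times H\leftarrow H$ are jointly strongly epimorphic, yet a pair of maps $G\to K$, $H\to K$ extends to $G\times H$ only when their images commute.) So your argument gives uniqueness of the transitivity morphism, but existence---the entire content of the proposition---is not established.

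The missing idea is the following standard manoeuvre. Let $\rho=\langle r_1\pi_1, r_2\pi_2\rangle\colon R\times_X R\to X\times X$ be the ``outer pair'' morphism; transitivity means exactly that $\rho$ factors through the monomorphism $\langle r_1,r_2\rangle\colon R\to X\times X$. Pull this monomorphism back along $\rho$ to obtain a monomorphism $j\colon P\to R\times_X R$. Reflexivity gives $\rho\langle 1_R,\delta r_2\rangle=\langle r_1,r_2\rangle=\rho\langle \delta r_1,1_R\rangle$, so both sections factor through $P$ by the universal property of this pullback. Joint strong epimorphicity of the pair of sections now forces $j$ to be an isomorphism, hence $\rho$ factors through $R$, which is transitivity. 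Your final paragraph gestures at ``a proper subobject'' but never constructs this pullback $P$; without it the argument does not close, since strong epimorphicity is only ever applied \emph{to} a given monomorphism, not used to extend partially defined data.
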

\begin{proof}
The proof is essentially the same as that of~\cite[Proposition
5.3]{S-proto}.
\end{proof}

\begin{example}
Unlike the case of Mal'tsev categories, it is not true that every
internal category with a Mal'tsev object of objects is a groupoid.
Neither is it true that every reflexive relation on a Mal'tsev
object is symmetric. The category $\Mon$ of monoids provides
counterexamples. Indeed, as we show below in
Theorems~\ref{Mal'tsev monoids are groups} and~\ref{groups =
protomodular monoids}, the Mal'tsev objects in $\Mon$ are
precisely the groups. As a consequence of Propositions 2.2.4 and
3.3.2 in~\cite{SchreierBook}, in $\Mon$ an internal category over
a group is a groupoid if and only if the kernel of the domain
morphism is a group. Similarly, a reflexive relation on a group is
symmetric if and only if the kernels of the two projections of the
relation are groups. A~concrete example of a (totally
disconnected) internal category which is not a groupoid is the
following. If $M$ is a commutative monoid and $G$ is a group,
consider the reflexive graph
\[
\xymatrix@!0@=7em{ M \times G \ar@<1ex>[r]^-{\pi_G}
\ar@<-1ex>[r]_-{\pi_G} & G. \ar[l]|-{\langle 0, 1_{G} \rangle} }
\]
It is an internal category by Proposition 3.2.3 in
\cite{SchreierBook}, but in general it is not a groupoid, since
the kernel of $\pi_G$, which is $M$, need not be a group.
\end{example}

\begin{proposition}\label{RS=SR}
In a regular category, any pair of reflexive relations $R$ and $S$
on a Mal'tsev object $Y$ commutes: $RS=SR$.
\end{proposition}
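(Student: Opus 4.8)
The plan is to reduce the commutativity statement $RS=SR$ to the defining property of a Mal'tsev object, namely the unitality of the fibre $\Pt_Y(\C)$, via the characterisation in Proposition~\ref{double split epi Mal'tsev}. First I would recall the classical situation: in a genuine Mal'tsev category the commutation of two reflexive relations is equivalent to the existence of a Mal'tsev (ternary) operation connecting them, and the proof exploits that every double relation is completely determined by its boundary projections. The key point here is that all the relevant pullbacks, kernel pairs, and the comparison maps live \emph{over $Y$}, so the object-wise hypothesis on $Y$ is exactly what is needed; nowhere do we require the full surrounding category to be Mal'tsev.

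The concrete strategy is as follows. Given reflexive relations $r = \langle r_1,r_2\rangle\colon R\to Y\times Y$ and $s = \langle s_1,s_2\rangle\colon S\to Y\times Y$, I would form the pullback $R\times_Y S$ of $r_2$ along $s_1$ (composable pairs) and build from it a double split epimorphism whose four corners are $R\times_Y S$, $R$, $S$ and $Y$, using the reflexivity sections of $R$ and $S$ as the splittings. This is a double split epimorphism \emph{over $Y$}: both downward composites land on $Y$ via the outer projections $r_1$ and $s_2$. Because $Y$ is a Mal'tsev object, Proposition~\ref{double split epi Mal'tsev} applies and forces this double split epimorphism to be a regular pushout, and in particular the comparison map into the pullback $R\times_Y Y \cong \cdots$ is a regular epimorphism. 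Unwinding what this regular-epimorphism statement says set-theoretically (legitimate in the regular context by the Barr embedding theorem, exactly as in the proof of Proposition~\ref{subtraction via spans}) yields: whenever $x\mathrel{R}y$, $y\mathrel{S}z$ and one has the ``return'' relations provided by reflexivity, there is a common element realising the composite in both orders. This is precisely the Smith/Pedicchio condition $RS\subseteq SR$, and by the symmetric construction $SR\subseteq RS$, giving $RS=SR$.

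The main obstacle I anticipate is purely organisational rather than conceptual: one must set up the double split epimorphism so that its comparison morphism to the appropriate pullback genuinely encodes the inclusion $RS\subseteq SR$. Concretely, the diagram of composable pairs has to be arranged so that being a regular pushout (equivalently, that comparison map being a regular epimorphism) translates into the surjectivity statement ``every element related first by $R$ then by $S$ is also related first by $S$ then by $R$''. Getting the reflexivity sections $t'$, $t$ and the two projections to fit the pattern of \eqref{double split extension}, with $(g,t)$ or its analogue being the relevant point over $Y$, is the delicate bookkeeping. Once the double split epimorphism is correctly assembled, the conclusion is essentially immediate from Proposition~\ref{double split epi Mal'tsev} together with the set-theoretical interpretation of regular epimorphisms in a regular category.

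Given the remark in the excerpt that several of these proofs are ``essentially the same as'' their counterparts for Mal'tsev categories, I expect the cleanest write-up to mirror the standard argument for commuting equivalence relations in Mal'tsev categories (as in \cite{CLP, Carboni-Kelly-Pedicchio}), simply replacing the global Mal'tsev hypothesis by the local statement that $Y$ is a Mal'tsev object and invoking Proposition~\ref{double split epi Mal'tsev} at the one point where the global property would have been used.
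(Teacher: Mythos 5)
Your high-level plan---build a double split epimorphism over $Y$ out of $R$ and $S$ with splittings coming from reflexivity, apply Proposition~\ref{double split epi Mal'tsev}, and unwind the resulting regular epimorphism set-theoretically---is exactly the strategy of the paper's proof (which follows Proposition~2.8 of~\cite{Bourn2014}). However, the concrete diagram you commit to cannot work, and the failure is not the ``delicate bookkeeping'' you anticipate but a missing object. You take the composable-pairs object $R\times_Y S$ (the pullback of $r_2$ along $s_1$) as the fourth corner of a double split epimorphism with corners $R$, $S$, $Y$. With the projections as the maps to $R$ and $S$, there are only two ways to complete this to a commutative square: either the maps to $Y$ are $r_2$ and $s_1$, in which case the square \emph{is} the pullback square, so it is trivially a regular pushout (the comparison morphism is an isomorphism) and Proposition~\ref{double split epi Mal'tsev} yields no information at all; or, as you actually write, the maps to $Y$ are the outer projections $r_1$ and $s_2$, in which case the square does not even commute---on a composable pair $a\mathrel{R}b\mathrel{S}c$ one composite gives $a$ and the other gives $c$. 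Your phrase ``the comparison map into the pullback $R\times_Y Y\cong\cdots$'' is the symptom: with your choice of vertex there is no nontrivial pullback left to compare against.

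The missing idea is that the vertex must be the \emph{double relation} $R\square S$, the subobject of $Y\times Y\times Y\times Y$ consisting of quadruples $(a,b,c,d)$ with $a\mathrel{R}b$, $c\mathrel{R}d$, $a\mathrel{S}c$, $b\mathrel{S}d$. Its edge projections $\pi_{12},\pi_{34}\colon R\square S\to R$ and $\pi_{13},\pi_{24}\colon R\square S\to S$, together with the degeneracy maps coming from reflexivity of $R$ and of $S$ (e.g.\ $(a,b)\mapsto(a,b,a,b)$ uses reflexivity of $S$), assemble into double split epimorphisms over $Y$ which are \emph{not} a priori pullbacks. Applying Proposition~\ref{double split epi Mal'tsev} to these is what carries content: it says that the comparison morphisms $\langle\pi_{12},\pi_{24}\rangle\colon R\square S\to R\times_Y S$ and $\langle\pi_{13},\pi_{34}\rangle\colon R\square S\to S\times_Y R$ are regular epimorphisms, i.e.\ every composable pair bounds a square, which is precisely the statement that unwinds to $RS\leq SR$ and $SR\leq RS$ (the paper concludes via Proposition~2.3 of~\cite{Bourn-Gran-Normal-Sections}; a Barr-embedding argument as you suggest would serve equally well at this final step). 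In short: the comparison map must go \emph{to} the object of composable pairs from something larger, not \emph{from} it, and identifying that larger object $R\square S$ is the one genuinely non-mechanical step your proposal lacks.
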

\begin{proof}
The proof of this result is similar to that of Proposition 2.8
in~\cite{Bourn2014}. Consider the double relation $R\square S$ on
$R$ and $S$:
$$
\xymatrix@!0@=7em{R\square S \ar@<-1ex>[d]_{\pi_{12}}
\ar@<1ex>[d]^-{\pi_{34}} \ar@<-1ex>[r]_-{\pi_{24}}
\ar@<1ex>[r]^-{\pi_{13}} &
 S \ar@<-1ex>[d]_{s_1} \ar@<1ex>[d]^-{s_2} \ar[l] \\
 R \ar@<-1ex>[r]_-{r_1} \ar@<1ex>[r]^-{r_2} \ar[u] & Y. \ar[l] \ar[u]}
$$
In set-theoretical terms, $R\square S$ is given by the subobject
of $Y\times Y\times Y\times Y$ whose elements are quadruples
$(a,b,c,d)$ such that
$$
\begin{array}{ccc} a & \!\!S\!\! & c \\ R & & R \\ b & \!\!S\!\! & d. \end{array}
$$
Let $R\times_Y S$ denote the pullback of $r_2$ and $s_1$, and
$S\times_Y R$ the pullback of $s_2$ and~$r_1$. By
Proposition~\ref{double split epi Mal'tsev}, the comparison
morphisms $\langle \pi_{12},\pi_{24} \rangle \colon R\square S \to
R\times_Y S$ and $\langle \pi_{13},\pi_{34} \rangle \colon
R\square S \to S\times_Y R$ are regular epimorphisms. Applying
Proposition~2.3 in~\cite{Bourn-Gran-Normal-Sections} to these
regular epimorphisms, it easily follows that $SR\leq RS$
and~${RS\leq SR}$.
\end{proof}

\begin{proposition}\label{Mal'tsev objects Mal'tsev cat}
If $\C$ is a finitely complete category, then $\M(\C)=\C$ if and
only if $\C$ is a Mal'tsev category.
\end{proposition}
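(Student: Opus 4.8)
The plan is to read the statement straight off the definition of a Mal'tsev object, combined with the fibrewise characterisation of Mal'tsev categories recalled in Proposition~\ref{Mal'tsev via fibres}. Since $\M(\C)$ is by construction the full subcategory determined by the Mal'tsev objects, the equality $\M(\C)=\C$ is nothing but the assertion that \emph{every} object of $\C$ is a Mal'tsev object, and this assertion can be matched term for term with the known characterisation. I expect no genuine obstacle here: the only mathematical content has already been packaged into Proposition~\ref{Mal'tsev via fibres} (cited from Bourn), so the argument is a matter of unwinding definitions.

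First I would observe that, by the definition of the full subcategory $\M(\C)$, we have $\M(\C)=\C$ if and only if every object $Y$ of $\C$ is a Mal'tsev object. By Definition~\ref{definition Mal'tsev objects}, an object $Y$ is a Mal'tsev object precisely when the fibre $\Pt_Y(\C)$ is unital. Chaining these two observations, $\M(\C)=\C$ holds if and only if the fibre $\Pt_Y(\C)$ is unital for every object $Y$ of $\C$.

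Next I would invoke Proposition~\ref{Mal'tsev via fibres}, according to which a finitely complete category $\C$ is a Mal'tsev category if and only if every fibre $\Pt_Y(\C)$ is unital (equivalently strongly unital). Combining this equivalence with the reformulation from the previous paragraph immediately gives that $\M(\C)=\C$ if and only if $\C$ is a Mal'tsev category. Because the whole argument is a single chain of equivalences, both implications are obtained simultaneously and no separate treatment of the two directions is required; the entire substance of the result resides in the cited fibrewise characterisation of Mal'tsev categories.
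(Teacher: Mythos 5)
Your proof is correct and is exactly the paper's argument: the paper's own proof consists of the single line ``By Proposition~\ref{Mal'tsev via fibres}'', and your write-up simply unwinds the definitional equivalence (every object is a Mal'tsev object iff every fibre $\Pt_Y(\C)$ is unital) that the paper leaves implicit before citing that proposition. No discrepancy to report.
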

\begin{proof}
By Proposition~\ref{Mal'tsev via fibres}.
\end{proof}

\begin{corollary} \label{Mal'tsev objects Mal'tsev cat 2}
If $\C$ is a finitely complete category and $\M(\C)$ is closed
under finite limits in $\C$, then $\M(\C)$ is a Mal'tsev category.
\end{corollary}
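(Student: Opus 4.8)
The plan is to deduce the result from Proposition~\ref{Mal'tsev objects Mal'tsev cat} by applying that proposition to the category $\M(\C)$ itself, in exact parallel with the proofs of the corresponding statements for $\U(\C)$, $\S(\C)$ and $\SU(\C)$. Since $\M(\C)$ is closed under finite limits in the finitely complete category $\C$, it is itself finitely complete, and the inclusion $\M(\C)\hookrightarrow\C$ preserves finite limits; in particular it preserves monomorphisms, and, being full and faithful, it reflects isomorphisms. Read for the category $\M(\C)$, Proposition~\ref{Mal'tsev objects Mal'tsev cat} asserts that $\M(\C)$ is a Mal'tsev category if and only if $\M(\M(\C))=\M(\C)$. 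As the inclusion $\M(\M(\C))\subseteq\M(\C)$ is automatic, it suffices to prove the reverse inclusion: every Mal'tsev object $Y$ of $\C$ remains a Mal'tsev object when regarded inside $\M(\C)$.

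Fix such a $Y$. By Definition~\ref{definition Mal'tsev objects} I must show that $\Pt_Y(\M(\C))$ is unital, that is, for every pullback of split epimorphisms over $Y$ as in~\eqref{pb of split epis} with all objects lying in $\M(\C)$, the pair $(\langle 1_A,tf \rangle,\langle sg,1_C \rangle)$ is jointly strongly epimorphic in $\M(\C)$. The key observation is that all of this data is computed identically in $\C$ and in $\M(\C)$: because $\M(\C)$ is closed under finite limits, the pullback $A\times_Y C$ again lies in $\M(\C)$ and coincides with the one formed in $\C$, whence the two comparison morphisms $\langle 1_A,tf \rangle$ and $\langle sg,1_C \rangle$ are literally the same arrows in both categories. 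Since $Y$ is a Mal'tsev object of $\C$, this pair is jointly strongly epimorphic in $\C$.

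It then remains to transfer joint strong epimorphy from $\C$ down to $\M(\C)$, and this descent is the only point I expect to require genuine (if routine) verification. Suppose both morphisms factor through a monomorphism $m$ of $\M(\C)$ with codomain $A\times_Y C$. As the inclusion preserves monomorphisms, $m$ is also a monomorphism in $\C$, and the same factorisations are valid in $\C$; joint strong epimorphy in $\C$ forces $m$ to be an isomorphism in $\C$, hence an isomorphism in $\M(\C)$ since the inclusion reflects isomorphisms. Thus the pair is jointly strongly epimorphic in $\M(\C)$, so $\Pt_Y(\M(\C))$ is unital and $Y$ is a Mal'tsev object of $\M(\C)$. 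This establishes $\M(\M(\C))=\M(\C)$, and Proposition~\ref{Mal'tsev objects Mal'tsev cat} now yields that $\M(\C)$ is a Mal'tsev category. The content beyond the bare citation is precisely the small lemma that a pair of morphisms of $\M(\C)$ which is jointly strongly epimorphic in $\C$ stays jointly strongly epimorphic in the finite-limit-closed full subcategory $\M(\C)$.
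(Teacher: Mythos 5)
Your proposal is correct and follows exactly the paper's own route: the paper's proof is the one-line instruction ``Apply Proposition~\ref{Mal'tsev objects Mal'tsev cat} to $\M(\C)$'', and your argument simply makes explicit the routine verification this entails, namely that pullbacks in $\M(\C)$ are computed as in $\C$ and that joint strong epimorphy descends to the finite-limit-closed full subcategory because the inclusion preserves monomorphisms and reflects isomorphisms. Nothing further is needed.
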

\begin{proof}
Apply Proposition~\ref{Mal'tsev objects Mal'tsev cat} to $\M(\C)$.
\end{proof}

\begin{proposition}\label{Mal'tsev closed for quots}
If $\C$ is a regular category, then $\M(\C)$ is closed under
quotients in $\C$.
\end{proposition}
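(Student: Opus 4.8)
The plan is to rely on the characterisation of Mal'tsev objects via double split epimorphisms (Proposition~\ref{double split epi Mal'tsev}, equivalence of (i) and (ii)): an object is Mal'tsev if and only if every double split epimorphism over it is a regular pushout. So let $Y$ be a Mal'tsev object and $w\colon Y\to W$ a regular epimorphism; I want to show that $W$ is a Mal'tsev object. First I would take an arbitrary double split epimorphism over $W$,
\[
\xymatrix@!0@=4em{ D \ar@<-.5ex>[d]_{g'} \ar@<-.5ex>[r]_{f'} & C \ar@<-.5ex>[d]_g \ar@<-.5ex>[l]_-{s'} \\ A \ar@<-.5ex>[u]_{t'} \ar@<-.5ex>[r]_f & W \ar@<-.5ex>[l]_s \ar@<-.5ex>[u]_t }
\]
and reduce the problem to a single statement. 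Since $f$, $g$, $f'$, $g'$ are split epimorphisms, hence regular epimorphisms, the square is a regular pushout as soon as the comparison morphism $\langle g',f'\rangle\colon D\to A\times_W C$ is a regular epimorphism; this is all that needs to be proved.

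Next I would pull the entire diagram back along $w$, forming $\bar A=A\times_W Y$, $\bar C=C\times_W Y$ and $\bar D=D\times_W Y$. Change of base along $w$ preserves pullbacks and sends split epimorphisms to split epimorphisms, so it produces a double split epimorphism over $Y$. As $Y$ is a Mal'tsev object, Proposition~\ref{double split epi Mal'tsev} shows that this pulled-back square is a regular pushout; in particular its comparison morphism $\bar D\to\bar A\times_Y\bar C$ is a regular epimorphism. Because pulling back along $w$ preserves the pullback $A\times_W C$, there is a canonical identification $\bar A\times_Y\bar C\cong(A\times_W C)\times_W Y$ under which this comparison becomes precisely the pullback of $\langle g',f'\rangle$ along $w$.

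The final, and main, step is a descent argument: I must deduce that $\langle g',f'\rangle$ is itself a regular epimorphism, knowing only that its pullback along $w$ is. For this I would use the commutative square
\[
\xymatrix@!0@C=9em@R=4em{ \bar{D} \ar@{->>}[r]^-{w^{*}\langle g',f'\rangle} \ar@{->>}[d] & (A\times_{W}C)\times_{W}Y \ar@{->>}[d] \\ D \ar[r]_-{\langle g',f'\rangle} & A\times_{W}C }
\]
in which the right-hand vertical map is the pullback of $w$ along $A\times_W C\to W$, hence a regular epimorphism since regular epimorphisms are pullback-stable in a regular category. The top edge is a regular epimorphism by the previous step, so the composite along the top and right is a regular epimorphism (regular epimorphisms compose in a regular category). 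This composite equals $\langle g',f'\rangle$ precomposed with the left-hand vertical map; since a composite that is a strong epimorphism has a strong-epimorphic outer factor, and strong and regular epimorphisms coincide in a regular category, it follows that $\langle g',f'\rangle$ is a regular epimorphism. Hence the double split epimorphism over $W$ is a regular pushout, and $W$ is a Mal'tsev object. The point requiring care is exactly this last step: one must be sure that it is the outer factor $\langle g',f'\rangle$ of the composite that is forced to be a (strong, hence regular) epimorphism, rather than the base-change leg.
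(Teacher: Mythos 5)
Your proposal is correct and follows essentially the same route as the paper: pull the double split epimorphism over the quotient back along the regular epimorphism to obtain one over the Mal'tsev object, invoke Proposition~\ref{double split epi Mal'tsev} to see that this pulled-back square is a regular pushout, and then descend. Your final descent step (the comparison map composed with a regular epimorphism is a regular epimorphism, hence the comparison map is a strong, thus regular, epimorphism) is precisely the ``straightforward to check'' part that the paper leaves implicit, and you handle it correctly.
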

\begin{proof}
Given a Mal'tsev object $X$ and a regular epimorphism $f\colon
{X\to Y}$, any double split epimorphism over $Y$ may be pulled
back to a double split epimorphism over $X$, which is a regular
pushout by assumption. It is straightforward to check that the
given double split epimorphism over $Y$ is then a regular pushout.
\end{proof}

\begin{example}
As a consequence of Example~\ref{semirings PM} below, in the
category of semi\-rings the Mal'tsev objects are precisely the
rings: $\M(\SRng)=\SU(\SRng)=\Rng$.
\end{example}

\begin{theorem} \label{Mal'tsev monoids are groups}
If $\C$ is the category $\Mon$ of monoids, then $\M(\C)$ is
contained in the subcategory $\Gp$ of groups. In other words, if
the category $\Pt_{M}(\Mon)$ is unital then the monoid $M$ is a
group.
\end{theorem}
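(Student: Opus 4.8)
The plan is to unwind the hypothesis into a concrete statement about submonoids and then to manufacture inverses from a single, well-chosen instance of it. By definition, $M$ being a Mal'tsev object means that $\Pt_M(\Mon)$ is unital, which by the discussion following Proposition~\ref{Mal'tsev via fibres} says: for every pullback of split epimorphisms over $M$ as in~\eqref{pb of split epis}, the injections $\langle 1_A,tf\rangle$ and $\langle sg,1_C\rangle$ are jointly strongly epimorphic. In $\Mon$, ``jointly strongly epimorphic'' just means that the two images generate the pullback as a submonoid. Since a monoid is a group precisely when each of its elements is invertible, it suffices to produce, for an arbitrary $a\in M$, a two-sided inverse.

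To feed $a$ into the fibre I would use the coproduct in $\Mon$ (the free product) as a probe: sections are homomorphisms and so cannot ``see'' a non-invertible $a$, whereas a map out of a coproduct may send a free generator to $a$. Concretely, set $A=M\ast\langle x\rangle$ and $C=M\ast\langle y\rangle$, the free products of $M$ with a free monoid on one generator; let $f\colon A\to M$ and $g\colon C\to M$ restrict to the identity on $M$ and send $x\mapsto a$ and $y\mapsto a$, with sections $s,t$ the coproduct inclusions of $M$. These are points over $M$, so forming their product in $\Pt_M(\Mon)$ yields the pullback $P=A\times_M C$ together with the injections $\alpha=\langle 1_A,tf\rangle$ and $\beta=\langle sg,1_C\rangle$; explicitly $\alpha(w)=(w,\overline{f(w)})$ and $\beta(w')=(\overline{g(w')},w')$, where $\overline{(\,\cdot\,)}$ denotes the image of an element of $M$ under the relevant inclusion. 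The Mal'tsev hypothesis asserts that $\alpha(A)$ and $\beta(C)$ generate $P$ as a submonoid.

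I would then apply this to the element $\theta=(x,y)\in P$, which lies in $P$ because $f(x)=a=g(y)$, and read off an inverse. Writing $\theta$ as a product of factors from $\alpha(A)\cup\beta(C)$ and applying the two exponent-sum homomorphisms counting occurrences of $x$ and of $y$, I expect to find that exactly one factor carries an $x$---necessarily of the form $\alpha(pxq)$ with $p,q\in M$---and exactly one carries a $y$---of the form $\beta(rys)$ with $r,s\in M$---while every remaining factor is a ``diagonal'' element $(m,\overline m)$. Projecting the factorisation to each coordinate and using that in a free product $M\ast\langle x\rangle$ an expression $m_1\,x\,m_2$ equals $x$ only when $m_1=m_2=1$, the two coordinate equations should collapse to four identities in $M$ which combine to give $a\cdot(q\,d\,r)=1=(q\,d\,r)\cdot a$, where $d$ is the product of the diagonal parts lying between the two distinguished factors. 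This exhibits a two-sided inverse of $a$, whence $M\in\Gp$; combined with the reverse inclusion $\Gp\subseteq\M(\Mon)$ obtained in the next section, this identifies $\M(\Mon)$ with $\Gp$.

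The hard part is the free-product bookkeeping in the last step: justifying that the forced factorisation of $\theta$ has precisely one generator-carrying factor on each side, and turning the two coordinate projections into the cancellation identities via normal forms, with the two possible relative orders of the distinguished $\alpha$- and $\beta$-factor treated symmetrically. The remaining points---that $A$ and $C$ really are objects of $\Pt_M(\Mon)$, and that their fibre product is the pullback $P$ with the stated injections---are routine; it is worth noting that this argument produces the inverse directly, without first invoking strong unitality. A useful sanity check on the probe is that when $a$ is already invertible one can indeed write $\theta=\alpha(x)\,\beta(\overline{a^{-1}}\,y)$, so the generated submonoid reaches $\theta$ exactly in the group case.
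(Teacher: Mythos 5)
Your proposal is correct, and it follows the same basic strategy as the paper's proof: probe the unital fibre $\Pt_M(\Mon)$ with a pullback of points whose domains are free products, use the fact that in $\Mon$ a jointly strongly epimorphic pair is exactly a jointly generating pair, and extract invertibility of an arbitrary element via exponent counting and free-product normal forms. The genuine difference is the choice of probe, and yours is a good one. The paper pulls back the codiagonal point $(\links 1_M\;1_M\rechts\colon M+M\to M,\ \iota_1)$ along $\links 1_M\;m\rechts\colon M+\N\to M$ and factorises the element $(\overline{1},\overline{m})$; because one leg involves words in the kernel of the codiagonal, the paper must reduce those words and run a case analysis on their lengths, which occupies most of its proof. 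You instead pull back two copies of one and the same point $M\ast\langle x\rangle\to M$, $x\mapsto a$, against each other and factorise $(x,y)$; by symmetry, each coordinate equation then has the form ``word containing exactly one free generator equals that generator'', so the normal-form lemma immediately yields four identities in $M$ (in your notation $\delta_0 p=e$, $q\delta_1 ras\delta_2=e$, $\delta_0 paq\delta_1 r=e$, $s\delta_2=e$, or their mirror images) which combine into the two-sided inverse $q\delta_1 r$ of $a$, with only the relative order of the two distinguished factors left to discuss. The two steps you flag as hard are exactly the standard facts you cite, and they do hold: the $x$- and $y$-exponent sums are monoid homomorphisms killing $M$, so precisely one factor is $\alpha(pxq)$ and precisely one is $\beta(rys)$ with all remaining factors diagonal, and in a free product of monoids $m_1xm_2=x$ forces $m_1=m_2=e$ by uniqueness of normal forms. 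A minor bonus of your route: it produces a two-sided inverse directly, whereas the paper obtains right invertibility and then relies on the fact that a monoid all of whose elements are right invertible is a group.
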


\begin{proof}
Let $M$ be a Mal'tsev object in the category of monoids. Given any
element $m \neq e_{M}$ of $M$, we are going to prove that it is
right invertible. This suffices for the monoid $M$ to be a group.

Consider the pullback diagram
\begin{equation}\label{M(Mon)}
\vcenter{\xymatrix@!0@=5em{ P \splitsplitpullback
\ar@<-.5ex>[r]_(.5){\pi_2} \ar@<-.5ex>[d]_-{\pi_1} & M + M
\ar@<-.5ex>[l]_-{i_2}
\ar@<-.5ex>[d]_-{\links1_M\;1_M\rechts} \\
M + \N \ar@<-.5ex>[u]_-{i_1} \ar@<-.5ex>[r]_-{\links1_M\;m\rechts}
& M \ar@<-.5ex>[l]_-{\iota_M} \ar@<-.5ex>[u]_-{\iota_1} }}
\end{equation}
where $m \colon \N \to M$ is the morphism sending $1$ to $m$.

Recall that $M + M$ may be seen as the set of words of the form
\[
\underline{l}_1\sqbullet \overline{r}_1\sqbullet \cdots \sqbullet
\underline{l}_s \sqbullet \overline{r}_s
\]
for $\underline{l}_i$, $\overline{r}_i \in M$, subject to the rule
that we may multiply underlined with underlined elements and
overlined with overlined ones, or any of such with the neutral
element $e_M$. The two coproduct inclusions can be described as
\[ \iota_1(l) = \underline{l} \qquad \iota_2(r) = \overline{r}
\]
for $l$, $r\in M$. We use essentially the same notations for the
elements of $M+\N$, writing a generic element as
$\underline{m}_1\sqbullet \overline{n}_1\sqbullet \cdots \sqbullet
\underline{m}_t\sqbullet \overline{n}_t$.

We see that the pullback $P$ consists of pairs
\[
(\underline{m}_1\sqbullet \overline{n}_1\sqbullet \cdots \sqbullet
\underline{m}_t\sqbullet \overline{n}_t,\;\underline{l}_1\sqbullet
\overline{r}_1\sqbullet \cdots \sqbullet \underline{l}_s \sqbullet
\overline{r}_s)\in (M+\N)\times(M+M)
\]
such that $m_1 m^{n_1} \cdots m_t m^{n_t}=l_1r_1 \cdots l_s r_s$.
We also know that
\begin{align*}
i_1(\underline{m}_1\sqbullet \overline{n}_1\sqbullet \cdots
\sqbullet \underline{m}_t\sqbullet \overline{n}_t) &=
(\underline{m}_1\sqbullet \overline{n}_1\sqbullet \cdots \sqbullet
\underline{m}_t\sqbullet \overline{n}_t,\;
\underline{m_1 m^{n_1} \cdots m_t m^{n_t}}),\\
i_2(\underline{l}_1\sqbullet \overline{r}_1\sqbullet \cdots
\sqbullet \underline{l}_s \sqbullet \overline{r}_s) &=
(\underline{l_1r_1 \cdots l_s r_s},\; \underline{l}_1\sqbullet
\overline{r}_1\sqbullet \cdots \sqbullet \underline{l}_s \sqbullet
\overline{r}_s).
\end{align*}
Note that $(\overline{1}, \overline{m})$ belongs to $P$, where
$\overline{1}$ is our way to view $1 \in \N$ as an element
of~$M+\N$. Since by assumption $i_1$ and $i_2$ are jointly
strongly epimorphic, we have
\begin{align*} \label{equation Mal'tsev monoid}
(\overline{1}, \overline{m})=\,&(\underline{m}_1^1\sqbullet
\overline{n}_1^1\sqbullet \cdots \sqbullet
\underline{m}_{t_1}^1\sqbullet \overline{n}_{t_1}^1,
\underline{m_1^1
m^{n_1^1} \cdots m_{t_1}^1 m^{n_{t_1}^1}})\\
 &\sqbullet(\underline{l^{1}_1r^{1}_1 \cdots l^{1}_{s_{1}} r^{1}_{s_{1}}},\;
\underline{l}^{1}_1\sqbullet \overline{r}^{1}_1\sqbullet \cdots
\sqbullet
\underline{l}^{1}_{s_{1}} \sqbullet \overline{r}^{1}_{s_{1}})\\
&\;\vdots \\
&\sqbullet(\underline{m}_1^k\sqbullet \overline{n}_1^k\sqbullet
\cdots \sqbullet \underline{m}_{t_k}^k\sqbullet
\overline{n}_{t_k}^k, \underline{m_1^k m^{n_1^k}
\cdots m_{t_k}^k m^{n_{t_k}^k}})\\
&\sqbullet(\underline{l^{k}_1r^{k}_1 \cdots l^{k}_{s_{k}}
r^{k}_{s_{k}}},\; \underline{l}^{k}_1\sqbullet
\overline{r}^{k}_1\sqbullet \cdots \sqbullet
\underline{l}^{k}_{s_{k}} \sqbullet \overline{r}^{k}_{s_{k}})
\end{align*}
for some $m^{i}_{j}$, $l^{i}_{j}$, $r^{i}_{j}\in M$ and
$n^{i}_{j}\in \N$. Computing the first component we get that
$\overline{1}$ is equal to
\[
\underline{m}_1^1\sqbullet \overline{n}_1^1\sqbullet \cdots
\sqbullet \underline{m}_{t_1}^1\sqbullet \overline{n}_{t_1}^1
\sqbullet \underline{l^{1}_1r^{1}_1 \cdots l^{1}_{s_{1}}
r^{1}_{s_{1}}} \sqbullet \cdots \sqbullet
\underline{m}_1^k\sqbullet \overline{n}_1^k\sqbullet \cdots
\sqbullet \underline{m}_{t_k}^k\sqbullet \overline{n}_{t_k}^k
\sqbullet \underline{l^{k}_1r^{k}_1 \cdots l^{k}_{s_{k}}
r^{k}_{s_{k}}}.
\]
Since $1$ cannot be written as a sum
$n^{1}_{1}+\cdots+n^{k}_{t_{k}}$ in $\N$ unless all but one of the
$n^{i}_{j}$ is zero, we see that the equality above reduces to
$(\overline{1}, \overline{m})$ being equal to
\begin{multline*}
(\underline{l_1r_1 \cdots l_s r_s},\; \underline{l}_1\sqbullet
\overline{r}_1\sqbullet \cdots \sqbullet \underline{l}_s \sqbullet
\overline{r}_s) \sqbullet(\overline{1}, \underline{m})\sqbullet
(\underline{l'_1r'_1 \cdots l'_{s'} r'_{s'}},\;
\underline{l}'_1\sqbullet \overline{r}'_1\sqbullet \cdots
\sqbullet \underline{l}'_{s'} \sqbullet \overline{r}'_{s'}).
\end{multline*}
Equality of the first components gives us
\[
\overline{1}=\underline{l_1r_1 \cdots l_s r_s} \sqbullet
\overline{1}\sqbullet \underline{l'_1r'_1 \cdots l'_{s'} r'_{s'}}
\]
from which we deduce that
\begin{equation}\label{in K}
l_1r_1 \cdots l_s r_s=e_{M}=l'_1r'_1 \cdots l'_{s'} r'_{s'}.
\end{equation}
This means that $\underline{l}_1\sqbullet \overline{r}_1\sqbullet
\cdots \sqbullet \underline{l}_s \sqbullet \overline{r}_s$ and
$\underline{l}'_1\sqbullet \overline{r}'_1\sqbullet \cdots
\sqbullet \underline{l}'_{s'} \sqbullet \overline{r}'_{s'}$ are in
the kernel of $\links1_M\;1_M\rechts\colon {M+M\to M}$. Without
loss of generality we may assume that these two products are
written in their reduced form, meaning that no further
simplification is possible, besides perhaps when
$\underline{l}_{1}$, $\overline{r}_{s}$, $\underline{l}'_{1}$ or
$\overline{r}'_{s'}$ happens to be equal to $e_{M}$. Computing the
second component, we see that
\begin{align*}
\overline{m} &= \underline{l}_1\sqbullet \overline{r}_1\sqbullet
\cdots \sqbullet \underline{l}_s \sqbullet \overline{r}_s
\sqbullet \underline{m} \sqbullet \underline{l}'_1\sqbullet
\overline{r}'_1\sqbullet \cdots \sqbullet
\underline{l}'_{s'} \sqbullet \overline{r}'_{s'} \\
&= \underline{l}_1\sqbullet \overline{r}_1\sqbullet \cdots
\sqbullet \underline{l}_s \sqbullet \overline{r}_s \sqbullet
\underline{ml'_1}\sqbullet \overline{r}'_1\sqbullet \cdots
\sqbullet \underline{l}'_{s'} \sqbullet \overline{r}'_{s'}.
\end{align*}
This leads to a proof that $m$ is right invertible. Indeed, for
such an equality to hold, certain cancellations must be possible
so that the overlined elements can get together on the right. Next
we study four basic cases which all others reduce to.

\emph{Case $s=s'=1$.} For the equality
\[
\overline{m} = \underline{l}_1\sqbullet \overline{r}_1 \sqbullet
\underline{ml'_1}\sqbullet \overline{r}'_1
\]
to hold, we must have $\overline{r}_1=e_M$ or
$\underline{ml'_1}=e_M$. In the latter situation, $m$ is right
invertible. If, on the other hand, $\overline{r}_1=e_M$, then
$\underline{l}_1=e_M$ by~\eqref{in K}. The equality $\overline{m}
= \underline{ml'_1}\sqbullet \overline{r}'_1$ implies that
$\underline{ml'_1}=e_{M}$.

\emph{Case $s=2$, $s'=1$.} For the equality
\[
\overline{m} = \underline{l}_1\sqbullet \overline{r}_1 \sqbullet
\underline{l}_2\sqbullet \overline{r}_2 \sqbullet
\underline{ml'_1}\sqbullet \overline{r}'_1
\]
to hold, we must have one of the ``inner'' elements on the right
side of the equality equal to $e_M$.
\begin{itemize}
 \item If $\underline{ml'_1}=e_M$, then $m$ is right invertible.
 \item If $\overline{r}_1=e_M$ or $\underline{l}_2=e_M$, then the word $\underline{l}_1\sqbullet \overline{r}_1\sqbullet
\underline{l}_2\sqbullet \overline{r}_2$ is not reduced.
 \item If $\overline{r}_2=e_M$, then $\overline{m} = \underline{l}_1\sqbullet
\overline{r}_1 \sqbullet \underline{l_2ml'_1}\sqbullet
\overline{r}'_1$. Since $\overline{r}_1$ is different from $e_M$,
we have that $l_2ml'_1=e_M$, so that $l_2$ admits an inverse on
the right and $l'_1$ admits one on the left. From~\eqref{in K}, we
also know that $l_2$ is admits an inverse on the left and $l'_1$
admits one on the right. Thus, they are both invertible elements,
and hence so is $m$.
\end{itemize}

\emph{Case $s=1$, $s'=2$.} For the equality
\[
\overline{m} = \underline{l}_1\sqbullet \overline{r}_1\sqbullet
\underline{ml'_1}\sqbullet \overline{r}'_1 \sqbullet
\underline{l}'_2\sqbullet \overline{r}'_2
\]
to hold, we must have one of the ``inner'' elements on the right
side of the equality equal to $e_M$.
\begin{itemize}
 \item If $\underline{ml'_1}=e_M$, then $m$ is right invertible.
 \item If $\overline{r}'_1=e_M$ or $\underline{l}'_2=e_M$, then the word $\underline{l'_1}\sqbullet \overline{r}'_1
\sqbullet \underline{l}'_2\sqbullet \overline{r}'_2$ is not
reduced.
 \item If $\overline{r}_1=e_M$, then $\underline{l}_1=e_M$ by~\eqref{in K}, so that
$\overline{m} = \underline{ml'_1}\sqbullet \overline{r}'_1
\sqbullet \underline{l}'_2\sqbullet \overline{r}'_2$. This is
impossible, since $\overline{r}'_1$ and $\underline{l}'_2$ are
non-trivial.
\end{itemize}

\emph{Case $s=2$, $s'=2$.} For the equality
\[
\overline{m} = \underline{l}_1\sqbullet \overline{r}_1\sqbullet
\underline{l}_2\sqbullet
\overline{r}_2\sqbullet\underline{ml'_1}\sqbullet \overline{r}'_1
\sqbullet \underline{l}'_2\sqbullet \overline{r}'_2
\]
to hold, we must have one of the ``inner'' elements on the right
side of the equality equal to $e_M$.
\begin{itemize}
 \item If $\underline{ml'_1}=e_M$, then $m$ is right invertible.
 \item If $\overline{r}_1=e_M$ or $\underline{l}_2=e_M$, then the word
 $\underline{l}_1\sqbullet \overline{r}_1\sqbullet
\underline{l}_2\sqbullet \overline{r}_2$ is not reduced.
 \item If $\overline{r}'_1=e_M$ or $\underline{l}'_2=e_M$, then the word $\underline{l'_1}\sqbullet \overline{r}'_1
\sqbullet \underline{l}'_2\sqbullet \overline{r}'_2$ is not
reduced.
 \item If $\overline{r}_2=e_M$, then $\overline{m} = \underline{l}_1\sqbullet
\overline{r}_1\sqbullet \underline{l_2ml'_1}\sqbullet
\overline{r}'_1 \sqbullet \underline{l}'_2\sqbullet
\overline{r}'_2$. Again, $\underline{l_2ml'_1}=e_{M}$ as in the
second case, and~\eqref{in K} implies that $m$ is invertible.
\end{itemize}

We see that the last case reduces to one of the previous ones and
it is straightforward to check that the same happens for general
$s$, $s'\geq 2$.
\end{proof}

Below, in Theorem~\ref{groups = protomodular monoids}, we shall
prove that groups are precisely the Mal'tsev monoids: $\M(\Mon) =
\Gp$.

\subsection{$\M(\C)$ is a Mal'tsev core}
As we already recalled in Section~\ref{section S-Mal'tsev and
S-protomodular}, if $\C$ is an $\s$-Mal'tsev category, then the
subcategory of $\s$-special objects $\s(\C)$ is a Mal'tsev
category, called the Mal'tsev core of $\C$ relatively to~$\s$. We
now show that the subcategory $\M(\C)$ of Mal'tsev objects is a
Mal'tsev core with respect to a suitable class $\m$ of points,
provided that $\M(\C)$ is closed under finite limits in $\C$.

Let $\C$ be a finitely complete category such that $\M(\C)$ is
closed under finite limits. We define $\m$ as the class of points
$(f,s)$ in $\C$ for which there exists a pullback of split
epimorphisms
\begin{equation}
\label{diagram for MM} \vcenter{
\xymatrix@!0@=4em{ A \splitsplitpullback \ar@<-.5ex>[r] \ar@<-.5ex>[d]_-{f} & A' \ar@<-.5ex>[l]_-{a} \ar@<-.5ex>[d]_{f'} \\
 X \ar@<-.5ex>[r] \ar@<-.5ex>[u]_-{s} & X', \ar@<-.5ex>[l] \ar@<-.5ex>[u]_{s'}
}}
\end{equation}
for some point $(f',s')$ in $\M(\C)$. Note that the class $\m$ is
obviously stable under pullbacks along split epimorphisms.
Moreover, all points in $\M(\C)$ belong to $\m$.

\begin{proposition}
\label{MM-Mal'tsev} Let $\C$ be a finitely complete category.
Given any pullback of split epimorphisms with $(f,s)$ a point in
$\m$
\[
\xymatrix@!0@=5em{ A\times_{X}C \splitsplitpullback
\ar@<-.5ex>[d]_{\pi_A} \ar@<-.5ex>[r]_(.7){\pi_C} & C
\ar@<-.5ex>[d]_-g
\ar@<-.5ex>[l]_-{\langle sg,1_C \rangle} \\
A \ar@<-.5ex>[u]_(.4){\langle 1_A,tf \rangle} \ar@<-.5ex>[r]_-f &
X, \ar@<-.5ex>[l]_-s \ar@<-.5ex>[u]_-t }
\]
the pair $(\langle 1_A,tf\rangle, \langle sg, 1_C\rangle)$ is
jointly strongly epimorphic.
\end{proposition}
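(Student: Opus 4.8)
The plan is to transfer the desired joint strong epimorphism from the base $X'$, which is a Mal'tsev object, down to $X$. By definition of $\m$, the point $(f,s)$ is the pullback of a point $(f',s')$ in $\M(\C)$ --- so in particular its codomain $X'$ is a Mal'tsev object --- along the bottom split epimorphism of~\eqref{diagram for MM}; write $p\colon X\to X'$ for this split epimorphism, $q\colon X'\to X$ for its section, $\pi_{A'}\colon A\to A'$ for the top projection and $a\colon A'\to A$ for its section, so that $A=X\times_{X'}A'$ with $f=\pi_X$, $s=\langle 1_X,s'p\rangle$ and $a=\langle qf',1_{A'}\rangle$. First I would observe, using the pasting lemma for pullbacks, that pulling $(f,s)$ back along $g\colon C\to X$ coincides with pulling $(f',s')$ back along the composite $pg\colon C\to X'$: placing the square defining $A\times_X C$ to the left of the square defining $(f,s)$ produces an outer rectangle which is again a pullback. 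This yields a canonical isomorphism $\phi\colon A\times_X C\to A'\times_{X'}C$ over $C$, where $A'\times_{X'}C$ denotes the pullback of $f'$ and $pg$.

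Next I would produce the companion point over $X'$. Since $gt=1_X$, the pair $(pg,tq)$ satisfies $pg\cdot tq=p(gt)q=pq=1_{X'}$, so it is a point over $X'$. Its binary product with $(f',s')$ in the fibre $\Pt_{X'}(\C)$ is exactly $A'\times_{X'}C$, with insertions $\langle 1_{A'},(tq)f'\rangle\colon A'\to A'\times_{X'}C$ and $\langle s'(pg),1_C\rangle\colon C\to A'\times_{X'}C$. Because $X'$ is a Mal'tsev object, the fibre $\Pt_{X'}(\C)$ is unital by Proposition~\ref{Mal'tsev via fibres}, and hence this pair of insertions is jointly strongly epimorphic.

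The heart of the argument is the comparison of the two pairs of insertions through $\phi$, $\pi_{A'}$ and $a$. A short computation with the universal properties of the two pullbacks gives the identities $\phi\circ\langle sg,1_C\rangle=\langle s'(pg),1_C\rangle$ and $\langle 1_{A'},(tq)f'\rangle=\phi\circ\langle 1_A,tf\rangle\circ a$. With these in hand the descent is routine: suppose $\langle 1_A,tf\rangle$ and $\langle sg,1_C\rangle$ both factor through a monomorphism $m\colon M\to A\times_X C$, say $\langle 1_A,tf\rangle=mu$ and $\langle sg,1_C\rangle=mv$. Transporting along $\phi$ turns $m$ into the monomorphism $\phi m$, through which $\langle s'(pg),1_C\rangle=(\phi m)v$ factors directly, while $\langle 1_{A'},(tq)f'\rangle=\phi\langle 1_A,tf\rangle\,a=(\phi m)(ua)$ factors through it as well. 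Unitality of $\Pt_{X'}(\C)$ then forces $\phi m$ to be an isomorphism, hence so is $m$; this is exactly the assertion that $(\langle 1_A,tf\rangle,\langle sg,1_C\rangle)$ is jointly strongly epimorphic.

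The main obstacle to anticipate is that reindexing along $p$ does \emph{not} preserve joint strong epimorphy in general --- this is precisely the phenomenon distinguishing strong from stably strong points --- so one cannot simply pull the $X'$-level conclusion back along $p$. The device that circumvents this is the second identity above: the first insertion over $X'$ is recovered from the (transported) first insertion over $X$ by precomposing with the section $a$, which is available exactly because~\eqref{diagram for MM} is a pullback of \emph{split} epimorphisms. It is this extra section that lets a factorisation through $m$ descend to a factorisation of the $X'$-insertions through $\phi m$, even though $p$ is not monic. Note finally that only the codomain $X'$ of $(f',s')$ is required to be a Mal'tsev object; its domain $A'$ plays no role.
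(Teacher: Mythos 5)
Your proof is correct and takes essentially the same route as the paper's: the paper's (much terser) argument observes precisely that, under the pasting isomorphism $A\times_X C\cong A'\times_{X'}C$, the pair $(\langle 1_A,tf\rangle a,\langle sg,1_C\rangle)$ corresponds to the jointly strongly epimorphic pair of product insertions in the unital fibre $\Pt_{X'}(\C)$, and then drops the section $a$. Your transfer identities $\phi\langle sg,1_C\rangle=\langle s'(pg),1_C\rangle$ and $\phi\langle 1_A,tf\rangle a=\langle 1_{A'},(tq)f'\rangle$, together with the final descent through the monomorphism, are exactly the details hidden in the paper's ``we see that'' and ``it easily follows''.
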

\begin{proof}
Since $(f,s)$ is a pullback of a point in $\M(\C)$ as in
\eqref{diagram for MM}, we see that the pair $(\langle
1_A,tf\rangle a, \langle sg, 1_C\rangle)$ is jointly strongly
epimorphic. It easily follows that also $(\langle 1_A,tf\rangle,
\langle sg, 1_C\rangle)$ is jointly strongly epimorphic.
\end{proof}

Note that the property above already occurred in
Definition~\ref{S-Mal'tsev and S-protomodular categories}(1).

\begin{proposition} \label{Mal'tsev objs = Mal'tsev core}
If $\C$ is a pointed finitely complete category, and the
subcategory $\M(\C)$ of Mal'tsev objects is closed under finite
limits in~$\C$, then it coincides with the subcategory $\m(\C)$ of
$\m$-special objects of~$\C$.
\end{proposition}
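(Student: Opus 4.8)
The plan is to prove the two inclusions $\M(\C)\subseteq\m(\C)$ and $\m(\C)\subseteq\M(\C)$ separately. The first is the easy half: if $Y$ is a Mal'tsev object, then, $\M(\C)$ being closed under finite limits, the object $Y\times Y$ again lies in $\M(\C)$, so that the point $(\pi_1,\Delta_Y)\colon{Y\times Y\leftrightarrows Y}$ is a point between two objects of $\M(\C)$, that is, a point in $\M(\C)$. As observed just before Proposition~\ref{MM-Mal'tsev}, every point in $\M(\C)$ belongs to $\m$ (pull it back along $1_Y$), whence $(\pi_1,\Delta_Y)\in\m$ and $Y$ is $\m$-special.

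For the converse I would take an $\m$-special object $X$, so that $(\pi_1,\Delta_X)\in\m$. By the definition of $\m$ there is then a pullback of split epimorphisms as in~\eqref{diagram for MM} with $(f,s)=(\pi_1,\Delta_X)$, and hence with $A=X\times X$, in which $(f',s')$ is a point in $\M(\C)$; in particular both $A'$ and $X'$ lie in $\M(\C)$. The crucial observation is what this pullback says about $X$ itself. Since $\pi_1$ is a pullback of $f'$ and pullbacks preserve kernels, $\ker(\pi_1)\cong\ker(f')$; but the kernel of the first projection $\pi_1\colon{X\times X\to X}$ is the second factor, $\ker(\pi_1)\cong X$ through $\langle 0,1_X\rangle$. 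Hence $X\cong\ker(f')$.

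To finish I would note that $\ker(f')$ belongs to $\M(\C)$: it is the limit of the cospan $A'\to X'\leftarrow 0$, all of whose objects lie in $\M(\C)$ (the zero object being the empty limit), and $\M(\C)$ is closed under finite limits in $\C$. Therefore $X\cong\ker(f')\in\M(\C)$, i.e.\ $X$ is a Mal'tsev object, giving $\m(\C)\subseteq\M(\C)$. The same argument goes through verbatim with the equivalent choice $(\pi_2,\Delta_X)\in\m$.

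The step I expect to be the main obstacle is the identification $X\cong\ker(f')$ in the second paragraph, or rather realising that this is the right thing to do. One is tempted to try to show that \emph{every} point over an $\m$-special object lies in $\m$, but this is false---already in $\Mon$ a group admits points whose domain is a monoid that is not a group---so the unitality of $\Pt_X(\C)$ cannot be obtained that way. The correct route is instead to use that being a pullback of a point in $\M(\C)$ exhibits $X$ as a kernel, hence a finite limit, of objects of $\M(\C)$; after that, closure under finite limits does all the work.
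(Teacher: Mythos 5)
Your proof is correct and follows essentially the same route as the paper's: the easy inclusion via closure under binary products plus the fact that points in $\M(\C)$ lie in $\m$, and the converse by recognising $X$ as the kernel of $\pi_1$, hence of $f'$, hence an object of $\M(\C)$. The paper states the kernel identification and the membership $\ker(f')\in\M(\C)$ without justification; your pullback-pasting argument and the observation that $\ker(f')$ is a finite limit of objects of $\M(\C)$ (with $0\in\M(\C)$ as the empty limit) are exactly the details being left implicit there.
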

\begin{proof}
If $X$ is a Mal'tsev object, it is obviously $\m$-special, since
the point
\begin{equation*}
(\pi_{2}\colon{X\times X\to X},\quad \Delta_{X}=\langle
1_{X},1_{X} \rangle\colon {X\to X\times X})
\end{equation*}
belongs to the subcategory $\M(\C)$, which is closed under binary
products.

Conversely, suppose that $X$ is $\m$-special. Then there is a
point $(f',s')$ in $\M(\C)$ and a point $X\leftrightarrows B'$ in
$\C$ such that the square
\[
\xymatrix@!0@=5em{ X\times X \splitsplitpullback \ar@<-.5ex>[r] \ar@<-.5ex>[d]_{\pi_1} & A' \ar@<-.5ex>[l] \ar@<-.5ex>[d]_{f'} \\
 X \ar@<-.5ex>[u]_(.4){\langle 1_{X},1_{X} \rangle} \ar@<-.5ex>[r] & B' \ar@<-.5ex>[l] \ar@<-.5ex>[u]_{s'} }
\]
is a pullback. But then $X$, which is the kernel of~$\pi_1$, is
also the kernel of $f'$, and hence it belongs to $\M(\C)$.
\end{proof}

Strictly speaking, we cannot apply Proposition 4.3 in
\cite{Bourn2014} to conclude that $\M(\C)$ is the Mal'tsev core of
$\C$ relatively to $\m$, since the class $\m$ we are considering
does not satisfy all the conditions of Definition~\ref{S-Mal'tsev
and S-protomodular categories}. Indeed, our class $\m$ is not
stable under pullbacks, neither need it to be closed in $\Pt(\C)$
under finite limits, in general. However, all the arguments of the
proof of Proposition~4.3 in~\cite{Bourn2014} are still applicable
to our context, since, by definition of the class~$\m$, we know
that every point between objects in $\M(\C)$ belongs to~$\m$. So,
we can conclude that, if $\M(\C)$ is closed in $\C$ under finite
limits, then it is a Mal'tsev category, being the Mal'tsev core of
$\C$ relatively to the class $\m$. Observe that we could also
conclude that $\M(\C)$ is a Mal'tsev category simply by
Corollary~\ref{Mal'tsev objects Mal'tsev cat 2}.

\section{Protomodular objects}\label{Protomodular objects}
In this final section we introduce the (stronger) concept of a
protomodular object and prove our paper's main result,
Theorem~\ref{groups = protomodular monoids}: a monoid is a group
if and only if it is a protomodular object, and if and only if it
is a Mal'tsev object.

\begin{definition} \label{definition protomodular objects}
Given an object $Y$ of a finitely complete category $\C$, we say
that~$Y$ is
\defn{protomodular} if every point with codomain $Y$ is stably
strong.

We write $\P(\C)$ for the full subcategory of $\C$ determined by
the protomodular objects.
\end{definition}

Obviously, every protomodular object is strongly unital. Hence it
is also unital and subtractive (Proposition~\ref{SU=SU}). We also
have:

\begin{proposition}\label{PM then Mal}
Let $\C$ be a finitely complete category. Every protomodular
object is a Mal'tsev object.
\end{proposition}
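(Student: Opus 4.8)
The plan is to realise the Mal'tsev condition as a single instance of the strong-point condition, with no appeal to pointedness. Let $Y$ be a protomodular object and consider an arbitrary pullback of split epimorphisms over $Y$ as in~\eqref{pb of split epis}, built from points $(f,s)\colon A\leftrightarrows Y$ and $(g,t)\colon C\leftrightarrows Y$; write $P=A\times_Y C$. By Definition~\ref{definition Mal'tsev objects} (in the form recalled after Proposition~\ref{Mal'tsev via fibres}), it suffices to show that $\langle 1_A,tf\rangle$ and $\langle sg,1_C\rangle$ are jointly strongly epimorphic.

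First I would observe that the point $(\pi_C\colon P\to C,\ \langle sg,1_C\rangle\colon C\to P)$ is exactly the pullback of the point $(f,s)$ along $g\colon C\to Y$ in the fibration of points. Since $Y$ is protomodular, $(f,s)$ is stably strong, so this pullback $(\pi_C,\langle sg,1_C\rangle)$ is a strong point. The heart of the argument is then to apply the defining property of this strong point, pulling it back along the section $t\colon Y\to C$. Concretely, I would paste the pullback of $\pi_C$ along $t$ onto the defining pullback square of $P=A\times_Y C$; by the pasting law the outer rectangle is the pullback of $f$ along $gt=1_Y$, hence canonically isomorphic to $A$. Tracing the projections through this identification shows that the comparison projection $\mathrm{proj}\colon Y\times_C P\to P$ satisfies $\pi_A\,\mathrm{proj}=1_A$ and $\pi_C\,\mathrm{proj}=tf$, so that $\mathrm{proj}=\langle 1_A,tf\rangle$ up to the isomorphism $Y\times_C P\cong A$.

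Now the strong-point condition for $(\pi_C,\langle sg,1_C\rangle)$ applied along $t$ says precisely that this projection together with the section $\langle sg,1_C\rangle$ is jointly strongly epimorphic; since composing one leg with an isomorphism does not affect joint strong epimorphy, I conclude that $\langle 1_A,tf\rangle$ and $\langle sg,1_C\rangle$ are jointly strongly epimorphic, which is the Mal'tsev condition. I expect the only delicate point to be this pullback identification: verifying carefully, via the pasting law rather than elementwise, that pulling $\pi_C$ back along $t$ reproduces $\langle 1_A,tf\rangle$ as the resulting projection. Everything else is a direct transcription of the definitions of protomodular object, stably strong point and Mal'tsev object, and since pointedness is nowhere used the argument works for an arbitrary finitely complete $\C$, as required.
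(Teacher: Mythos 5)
Your proof is correct and is essentially the paper's own argument, up to the evident symmetry of the pullback square: the paper takes $(g,t)$ stably strong, pulls it back along $f$ to get the strong point $(\pi_A,\langle 1_A,tf\rangle)$, and then applies the strong-point condition along $s$, whereas you take $(f,s)$ stably strong, pull back along $g$ to get $(\pi_C,\langle sg,1_C\rangle)$, and apply the condition along $t$. Your careful identification of the pullback of $\pi_C$ along $t$ with $A$ (yielding $\langle 1_A,tf\rangle$ as the projection) is exactly the mirror image of the paper's observation that the pullback of $s$ along $\pi_A$ is $\langle sg,1_C\rangle$, so both arguments coincide and work in any finitely complete category.
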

\begin{proof}
Let $Y$ be a protomodular object and consider the following
pullback of split epimorphisms:
\[
\xymatrix@!0@=5em{ A\times_{Y}C \splitsplitpullback
\ar@<-.5ex>[d]_{\pi_A} \ar@<-.5ex>[r]_(.7){\pi_C} & C
\ar@<-.5ex>[d]_g
\ar@<-.5ex>[l]_-{\langle sg,1_C \rangle} \\
A \ar@<-.5ex>[u]_(.4){\langle 1_A,tf \rangle} \ar@<-.5ex>[r]_f &
Y. \ar@<-.5ex>[l]_s \ar@<-.5ex>[u]_t }
\]
Since $Y$ is protomodular, the point $(g, t)$ is stably strong
and, consequently, also $(\pi_A, \langle 1_A,tf\rangle)$ is a
strong point. Moreover, the pullback of $s$ along $\pi_A$ is
precisely $\langle sg, 1_C\rangle$, so that the pair $(\langle
1_A,tf\rangle, \langle sg, 1_C\rangle)$ is jointly strongly
epimorphic, as desired. Observe that this proof is a simplified
version of that of Theorem~3.2.1 in~\cite{S-proto}.
\end{proof}

Note that, in the regular case, the above result follows from
Proposition~\ref{double split epi Mal'tsev} via Lemma~\ref{Lemma
Double}.

The inclusion $\P(\C)\subset \M(\C)$ is strict, in general, by the
following proposition, Proposition~\ref{Mal'tsev objects Mal'tsev
cat} and the fact that there exist Mal'tsev categories which are
not protomodular.

\begin{proposition}\label{proto objects proto cat}
If $\C$ is a finitely complete category, then $\P(\C)=\C$ if and
only if $\C$ is protomodular.
\end{proposition}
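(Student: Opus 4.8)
The plan is to unwind the two definitions directly, since the statement merely relates the object-wise condition defining $\P(\C)$ to the global condition defining protomodularity. Recall that $\C$ is protomodular exactly when every point in $\C$ is strong, while an object $Y$ lies in $\P(\C)$ precisely when every point with codomain $Y$ is stably strong.

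For the implication that $\C$ protomodular forces $\P(\C)=\C$, I would fix an arbitrary object $Y$ together with an arbitrary point $(f,s)$ over $Y$. To see that $(f,s)$ is stably strong, I must check that each of its pullbacks along any morphism is a strong point. But any such pullback is again a point of $\C$, and every point of $\C$ is strong by hypothesis; hence $(f,s)$ is stably strong. As $Y$ and the point over it were arbitrary, every object is protomodular, so $\P(\C)=\C$.

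For the converse, I would assume $\P(\C)=\C$, so that every object of $\C$ is a protomodular object. Given any point $(f,s)$ in $\C$, say with codomain $B$, the object $B$ is protomodular, whence every point over $B$---and in particular $(f,s)$---is stably strong. Since a stably strong point is always strong (pull it back along the identity, as observed after the definition of stably strong points in Section~\ref{SSP}), $(f,s)$ is strong. As $(f,s)$ ranges over all points of $\C$, every point is strong, that is, $\C$ is protomodular.

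There is no genuine obstacle here: the only point requiring a moment's care is the forward implication, where one uses that a pullback of a point is again a point and therefore automatically strong, so that strongness of every point upgrades to stable strongness of every point. This is exactly the observation already recorded in Section~\ref{SSP} that in a protomodular category all points are stably strong.
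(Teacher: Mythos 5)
Your proof is correct and follows essentially the same route as the paper's: both arguments simply unwind the definitions, noting that protomodularity (all points strong) automatically upgrades to all points being stably strong because any pullback of a point is again a point, while the converse uses that stably strong points are in particular strong. The paper merely states this more tersely.
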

\begin{proof}
By definition, a finitely complete category is protomodular if and
only if all points in it are strong. When this happens,
automatically all of them are stably strong.
\end{proof}

\begin{corollary}
If $\C$ is a finitely complete category and $\P(\C)$ is closed
under finite limits in $\C$, then $\P(\C)$ is a protomodular
category.
\end{corollary}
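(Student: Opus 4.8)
The plan is to deduce this from Proposition~\ref{proto objects proto cat}, applied not to $\C$ but to the category $\P(\C)$ itself. First I would note that, since $\P(\C)$ is closed under finite limits in the finitely complete category $\C$, it is finitely complete, so the proposition is available for it. It then suffices to prove that $\P(\P(\C))=\P(\C)$, that is, that every object of $\P(\C)$ is a protomodular object \emph{when $\P(\C)$ is taken as the ambient category}; the proposition will then immediately yield that $\P(\C)$ is protomodular.

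The key step is a transfer principle for the relevant epimorphic condition along the full inclusion of $\P(\C)$ into $\C$. Because $\P(\C)$ is full and closed under finite limits, a morphism between objects of $\P(\C)$ is a monomorphism in $\P(\C)$ if and only if it is one in $\C$ (monomorphy is detected by kernel pairs, which are computed the same way in both categories), and it is an isomorphism in $\P(\C)$ if and only if it is one in $\C$ (by fullness). Hence every subobject of an object $A$ in $\P(\C)$ computed in $\P(\C)$ is also a subobject in $\C$. Consequently a pair of morphisms with codomain $A$ that is jointly strongly epimorphic in $\C$ remains jointly strongly epimorphic in $\P(\C)$: there are simply fewer proper subobjects through which it could factor. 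Combined with the fact that all the pullbacks occurring in the definitions of \emph{strong} and \emph{stably strong} point agree in $\P(\C)$ and in $\C$, this shows that any (stably) strong point of $\C$ whose objects lie in $\P(\C)$ is also a (stably) strong point of $\P(\C)$.

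With this in hand the conclusion is immediate. Given an object $Y$ in $\P(\C)$ and any point $(f,s)$ over $Y$ in $\P(\C)$, the object $Y$ is protomodular in $\C$, so $(f,s)$ is stably strong in $\C$; by the transfer principle it is stably strong in $\P(\C)$. Thus $Y$ is a protomodular object of $\P(\C)$, giving $\P(\P(\C))=\P(\C)$, and Proposition~\ref{proto objects proto cat} finishes the argument.

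The one point that deserves care---and which I expect to be the main obstacle---is precisely the transfer of the property \emph{jointly strongly epimorphic} from $\C$ to the smaller category $\P(\C)$. Passing to a full subcategory shrinks the lattice of subobjects, so one must check that no new proper subobject appears and that the notion of isomorphism is not weakened; both follow from closure under finite limits together with fullness, and it is exactly here that the hypothesis that $\P(\C)$ is closed under finite limits is used. Everything else is a routine comparison of the finite-limit constructions in $\P(\C)$ and in $\C$.
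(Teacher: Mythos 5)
Your proof is correct and takes essentially the same route as the paper, whose entire proof reads ``Apply Proposition~\ref{proto objects proto cat} to $\P(\C)$'': your argument is precisely this application, with the transfer of (stably) strong points along the full, finite-limit-closed inclusion $\P(\C)\hookrightarrow\C$ spelled out explicitly. The checks you make on monomorphisms, isomorphisms and jointly strongly epimorphic pairs are exactly the implicit content of that one-line proof.
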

\begin{proof}
Apply Proposition~\ref{proto objects proto cat} to $\P(\C)$.
\end{proof}

Observe that this hypothesis is satisfied when $\C$ is the
category $\Mon$ of monoids, or the category $\SRng$ of semirings,
as can be seen as a consequence of Example~\ref{semirings PM} and
Theorem~\ref{groups = protomodular monoids} below.

\begin{proposition}
If $\C$ is regular, then $\P(\C)$ is closed under quotients in
$\C$.
\end{proposition}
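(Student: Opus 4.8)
The plan is to reduce the statement to the fact, already established in Proposition~\ref{stably strong points closed under quotients}, that stably strong points are closed under quotients in a regular category. So suppose that $Y$ is a protomodular object and that $w\colon{Y\to W}$ is a regular epimorphism; I must show that every point $(f',s')$ with codomain $W$ is stably strong.

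First I would pull back the given point $(f'\colon{A'\to W},s'\colon{W\to A'})$ along $w$, forming the pullback
\[
\xymatrix@!0@=5em{ Y\times_{W}A' \ar@{->>}[r]^-{\pi_{A'}} \ar@<-.5ex>[d]_-{\pi_{Y}} & A' \ar@<-.5ex>[d]_-{f'} \\
Y \ar@<-.5ex>[u]_-{\langle 1_{Y},s'w \rangle} \ar@{->>}[r]_-{w} & W. \ar@<-.5ex>[u]_-{s'} }
\]
This produces a point $(\pi_{Y},\langle 1_{Y},s'w \rangle)$ with codomain $Y$. Since $Y$ is protomodular, \emph{every} point over it is stably strong, so in particular this pulled-back point is stably strong.

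The second step is to observe that the square above is a commutative square of points whose two horizontal arrows $w$ and $\pi_{A'}$ are both regular epimorphisms: $w$ by hypothesis, and $\pi_{A'}$ because it is the pullback of the regular epimorphism $w$ along $f'$, and regular epimorphisms are pullback-stable in a regular category. Applying Proposition~\ref{stably strong points closed under quotients}, which says precisely that a quotient of a stably strong point by such a pair of regular epimorphisms is again stably strong, I conclude that $(f',s')$ is stably strong. As $(f',s')$ was an arbitrary point over $W$, the object $W$ is protomodular.

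I expect no serious obstacle here: the argument is formally parallel to the proof of Proposition~\ref{4.8} for strongly unital objects, the only difference being that one quantifies over all points over $Y$ rather than over a single canonical one. The one point requiring (routine) care is the verification that the pullback projection $\pi_{A'}$ is a regular epimorphism, which uses regularity of $\C$ in an essential way---this is exactly where the hypothesis that $\C$ be regular enters.
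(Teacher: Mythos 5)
Your proof is correct and is exactly the argument the paper intends when it says the result ``follows immediately from Proposition~\ref{stably strong points closed under quotients}'': pulling back an arbitrary point over $W$ along the regular epimorphism $w$ yields a stably strong point over $Y$, of which the original point is a quotient (via $w$ and the pullback-stable regular epimorphism $\pi_{A'}$). You have simply spelled out the details that the paper leaves to the reader, so there is nothing to correct.
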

\begin{proof}
This follows immediately from Proposition~\ref{stably strong
points closed under quotients}.
\end{proof}

\begin{example} $\P(\SRng)= \M(\SRng)=\SU(\SRng)=\S(\SRng)=\Rng$. \label{semirings PM}
If $X$ is a protomodular semi{\-}ring, then it is obviously a
strongly unital semiring, thus a ring by
Theorem~\ref{SU(SRng)=Rng}. We already mentioned that if $X$ is a
ring, then every point over it in $\SRng$ is stably strong, since
it is a Schreier point by~\cite[Proposition~6.1.6]{SchreierBook}.
In particular, the category $\P(\SRng)=\Rng$ is closed under
finite limits and it is protomodular. Thanks to Propositions
\ref{PM then Mal} and~\ref{Mal'tsev implies strongly unital}, we
also have that $\M(\SRng) = \Rng$.
\end{example}

\begin{theorem} \label{groups = protomodular monoids}
If $\C$ is the category $\Mon$ of monoids, then
$\P(\C)=\M(\C)=\Gp$, the category of groups. In other words, the
following conditions are equivalent, for any monoid $M$:
\begin{tfae}
\item $M$ is a group; \item $M$ is a Mal'tsev object, i.e.,
$\Pt_{M}(\Mon)$ is a unital category; \item $M$ is a protomodular
object, i.e., all points over $M$ in the category of monoids are
stably strong.
\end{tfae}
\end{theorem}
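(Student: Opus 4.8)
The plan is to prove the cycle of implications $(iii)\To(ii)\To(i)\To(iii)$, from which the stated equality $\P(\Mon)=\M(\Mon)=\Gp$ follows at once. Two of these are already in hand: Proposition~\ref{PM then Mal} gives $(iii)\To(ii)$, since every protomodular object is a Mal'tsev object; and Theorem~\ref{Mal'tsev monoids are groups} gives $(ii)\To(i)$, since a Mal'tsev monoid is a group. So the only implication that remains is $(i)\To(iii)$: I must show that every group $M$, viewed as a monoid, is a protomodular object, that is, that every point over $M$ in $\Mon$ is stably strong.

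To prove $(i)\To(iii)$ I would reduce to the theory of Schreier points developed in~\cite{SchreierBook}, exactly as was done for semirings in the proof of Theorem~\ref{SU(SRng)=Rng} and in Example~\ref{semirings PM}. Concretely, I would show that any point $(f\colon A\to M,\, s\colon M\to A)$ over a group $M$ is automatically a Schreier point, and then invoke the fact that Schreier points in $\Mon$ are stably strong~\cite{SchreierBook}. The observation that unlocks everything is that, because $M$ is a group and $s$ is a monoid homomorphism, every element in the image of $s$ is invertible in $A$: for $m\in M$ we have $s(m)\,s(m^{-1})=s(m\,m^{-1})=s(e_M)=e_A=s(m^{-1})\,s(m)$, so $s(m)$ is a unit of $A$ with inverse $s(m^{-1})$.

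Given this, the Schreier condition is checked directly. For $a\in A$ put $k=a\cdot s(f(a)^{-1})$; then $f(k)=f(a)\,f(a)^{-1}=e_M$, so $k\in\ker(f)$, while $k\cdot s(f(a))=a\cdot s(f(a)^{-1})\,s(f(a))=a\cdot s(e_M)=a$. This yields the required decomposition $a=k\cdot s(f(a))$ with $k$ in the kernel. Uniqueness of $k$ follows immediately from the invertibility of $s(f(a))$: from $k_1\cdot s(f(a))=k_2\cdot s(f(a))$ one cancels on the right by $s(f(a)^{-1})$ to get $k_1=k_2$. Hence $(f,s)$ is a Schreier point, and therefore stably strong, so $M$ is protomodular.

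The main obstacle, and the conceptual heart of the matter, is the uniqueness half of the Schreier condition, which is precisely where groupness of $M$ is indispensable. Over a general monoid $s(f(a))$ need not be cancellable, and the decomposition can fail to be unique; this is exactly the phenomenon that lets gregarious but non-group monoids arise (Example~\ref{Greg}), and is the reason the result is special to groups. Once Schreier-ness is secured, stable strength is a black-box consequence of the pullback-stability and strength of Schreier points, requiring no further diagram chase; combining $(i)\To(iii)$ with the two earlier implications closes the cycle and completes the proof.
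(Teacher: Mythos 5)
Your proposal is correct and takes essentially the same approach as the paper: the same cycle (iii)~$\To$~(ii) by Proposition~\ref{PM then Mal}, (ii)~$\To$~(i) by Theorem~\ref{Mal'tsev monoids are groups}, and (i)~$\To$~(iii) by reducing to Schreier points, whose stable strength is quoted from~\cite{SchreierBook}. The only (harmless) difference is that you verify inline that every point over a group is a Schreier point---correctly, via invertibility of $s(f(a))$ and the existence/uniqueness of the decomposition $a=k\cdot s(f(a))$ with $k\in\ker(f)$---where the paper simply cites Proposition~3.4 of~\cite{BM-FMS2} for this fact.
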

\begin{proof}
If $M$ is a group, then every point over it is stably strong: by
Proposition~3.4 in \cite{BM-FMS2} it is a Schreier point, and
Schreier points are stably strong by Lemma 2.1.6 and Proposition
2.3.4 in \cite{SchreierBook}. This proves that (i) implies (iii).
(iii)~implies (ii) by Proposition~\ref{PM then Mal}, and (ii)
implies (i) by Theorem~\ref{Mal'tsev monoids are groups}.
\end{proof}

\begin{remark}
Note that, in particular, $\P(\Mon)$ is closed under finite limits
in the category $\Mon$.
\end{remark}

\begin{remark}
The proof of Theorem~\ref{Mal'tsev monoids are groups} may be
simplified to obtain a direct proof that (iii) implies (i) in
Theorem~\ref{groups = protomodular monoids}. Instead of the
pullback diagram~\eqref{M(Mon)}, we may consider the simpler
pullback of $\links 1_{M}\;1_M\rechts\colon M+M\to M$ along
$m\colon{\N\to M}$. This idea is further simplified and at the same time strengthened in the article~\cite{GM-ACS}.
\end{remark}

\begin{remark}
As recalled in Example~\ref{Greg}, there are gregarious monoids
that are not groups. Hence, in $\Mon$, the subcategory $\P(\Mon)$
is strictly contained in $\SU(\Mon)$.
\end{remark}

\begin{example}
In the category $\Cat_{X}(\C)$ of internal categories over a fixed
base object~$X$ in a finitely complete category $\C$, any internal
groupoid over~$X$ is a protomodular object. This follows from
results in~\cite{Bourn2014}: any pullback of any split epimorphism
over such an internal groupoid ``has a fibrant splitting'', which
implies that it is a strong point. So, over a given internal
groupoid over~$X$, all points are stably strong, which means that
this internal groupoid is a protomodular object.
\end{example}

Similarly to the Mal'tsev case, we also have:

\begin{proposition}
If $\C$ is a pointed finitely complete category, then the zero
object is protomodular if and only if $\C$ is unital.
\end{proposition}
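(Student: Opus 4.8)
The plan is to unwind Definition~\ref{definition protomodular objects} at the zero object and recognise the resulting condition as the defining property of a unital category. First I would note that, since $0$ is terminal, a point over $0$ carries no data beyond its domain: it consists of an arbitrary object $A$ together with the unique morphisms $A\to 0$ and $0\to A$, so that the fibre $\Pt_0(\C)$ is (isomorphic to) $\C$ itself. By definition, then, the object $0$ is protomodular precisely when every such point $A\leftrightarrows 0$ is stably strong.

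The key step is to compute the pullbacks of the point $A\leftrightarrows 0$. Any such pullback is taken along a morphism with codomain $0$; since $0$ is terminal, for each object $C$ there is exactly one morphism $C\to 0$, and the pullback of $A\to 0$ along it is simply the product $C\times A$. The associated point over $C$ is the product projection point $(\pi_C\colon C\times A\to C,\ \langle 1_C,0\rangle)$, the section being $\langle 1_C,0\rangle$ because the composite $C\to 0\to A$ is the zero morphism. Hence the pullbacks of $A\leftrightarrows 0$ are exactly the product projection points $(\pi_C,\langle 1_C,0\rangle)$ as $C$ ranges over all objects.

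Combining these observations, the point $A\leftrightarrows 0$ is stably strong if and only if every such product projection point is strong; letting $A$ and $C$ vary over all objects, the object $0$ is protomodular if and only if $(\pi_X\colon X\times Y\to X,\ \langle 1_X,0\rangle)$ is a strong point for all objects $X$ and $Y$. This last condition is exactly the definition of a unital category, which settles both implications simultaneously.

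I expect the only delicate point to be conceptual rather than computational: the genuine content of the condition hides in the \emph{pullbacks} of $A\leftrightarrows 0$ rather than in the point $A\leftrightarrows 0$ itself, whose own strongness is nearly vacuous since its section $0\to A$ is the zero map; it is the stably-strong requirement that captures unitality. As a sanity check and an alternative route for one direction, I would note that Proposition~\ref{PM then Mal} shows every protomodular object is a Mal'tsev object, and the zero object is a Mal'tsev object if and only if $\C$ is unital (proved earlier in the text); this gives ``$0$ protomodular $\Rightarrow$ $\C$ unital'' at once, with the converse supplied by the pullback identification above.
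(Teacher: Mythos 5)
Your proof is correct and follows essentially the same route as the paper: unwind the definition of a protomodular zero object, observe that the pullbacks of a point $A\leftrightarrows 0$ along the unique morphisms $C\to 0$ are exactly the product projection points $(\pi_C,\langle 1_C,0\rangle)$, and recognise the resulting condition as the definition of a unital category. The only cosmetic difference is that the paper unwinds strongness of these pulled-back points one step further, into the requirement that $\langle 1_X,0\rangle$ and $\langle 0,1_Y\rangle$ be jointly strongly epimorphic, whereas you match the condition directly against the strong-point formulation of unitality; these are the same thing.
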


\begin{proof}
The zero object $0$ is protomodular if and only if every point
over it is stably strong. This means that, for any $X$, $Y \in
\C$, in the diagram
\[
\xymatrix@!0@=5em{ X \ar@{{ |>}->}[r]^-{\langle 1_{X},0 \rangle}
\ar@{=}[dr] & X \times Y \halfsplitpullback
\ar@<-.5ex>[r]_(.7){\pi_Y}
\ar[d]^(.5){\pi_X} & Y \ar@<-.5ex>[l]_-{\langle 0,1_{Y} \rangle} \ar[d] \\
& X \ar@<-.5ex>[r] & 0, \ar@<-.5ex>[l] }
\]
the morphisms $\langle 1_{X}, 0 \rangle$ and $\langle 0, 1_{Y}
\rangle$ are jointly strongly epimorphic. This happens if and only
if $\C$ is unital.
\end{proof}

\begin{proposition}\label{PM via sum}
If $\C$ is a regular category with binary coproducts, then the
following conditions are equivalent:
\begin{tfae}
\item $Y$ is a protomodular object; \item for every morphism
$f\colon {X\to Y}$, the point
\[
(\links f\;1_{Y}\rechts\colon{X+Y\to Y},\quad \iota_{Y}\colon
{Y\to X+Y})
\]
is stably strong.
\end{tfae}
\end{proposition}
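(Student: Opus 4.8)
The plan is to establish the two implications separately, observing that (i) $\To$ (ii) is essentially immediate from the definition, while all the content lies in (ii) $\To$ (i).

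For (i) $\To$ (ii), if $Y$ is a protomodular object then \emph{every} point with codomain $Y$ is stably strong by Definition~\ref{definition protomodular objects}. Since $\links f\;1_{Y}\rechts\comp\iota_{Y}=1_{Y}$, the pair $(\links f\;1_{Y}\rechts,\iota_{Y})$ is precisely such a point, hence stably strong; nothing further is needed, and neither regularity nor coproducts play any role here.

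For (ii) $\To$ (i), the idea is that an \emph{arbitrary} point over $Y$ can be presented as a quotient of a coproduct point of the shape appearing in (ii), so that the conclusion follows from the closure of stably strong points under quotients in a regular category (Proposition~\ref{stably strong points closed under quotients}). Concretely, I would start from any point $(g\colon A\to Y,\, s\colon Y\to A)$ and instantiate hypothesis (ii) at $X=A$ and $f=g$, obtaining the stably strong point $(\links g\;1_{Y}\rechts\colon A+Y\to Y,\, \iota_{Y}\colon Y\to A+Y)$. Next I would introduce the canonical comparison morphism $\links 1_{A}\;s\rechts\colon A+Y\to A$ and verify that, together with $1_{Y}$ on the base $Y$, it constitutes a morphism of points from $(\links g\;1_{Y}\rechts,\iota_{Y})$ to $(g,s)$: the two identities $g\comp\links 1_{A}\;s\rechts=\links g\;1_{Y}\rechts$ and $\links 1_{A}\;s\rechts\comp\iota_{Y}=s$ are routine checks using $gs=1_{Y}$ and the universal property of the coproduct. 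The key point is that $\links 1_{A}\;s\rechts$ is split by $\iota_{A}$, hence a split, and therefore regular, epimorphism, while $1_{Y}$ is trivially a regular epimorphism. This exhibits $(g,s)$ as a quotient of a stably strong point in the precise sense of Proposition~\ref{stably strong points closed under quotients}, so that $(g,s)$ is itself stably strong. Since $(g,s)$ was an arbitrary point over $Y$, we conclude that $Y$ is protomodular.

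The only genuinely creative step is the construction in the second implication: recognising that the coproduct point $(\links g\;1_{Y}\rechts,\iota_{Y})$ retracts onto the given point $(g,s)$ via the canonical map $\links 1_{A}\;s\rechts$, and that this retraction is a regular epimorphism in the fibre $\Pt_{Y}(\C)$. Once this presentation is in hand, everything reduces to a diagram verification together with the quotient-closure result; regularity enters solely through that result, and the binary coproducts are used only to form $A+Y$.
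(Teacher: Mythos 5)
Your proof is correct and takes essentially the same route as the paper: it too dismisses (i) $\To$ (ii) as immediate and proves (ii) $\To$ (i) by exhibiting the morphism of points given by $\links 1_{X}\;s\rechts$ over $1_{Y}$ from the coproduct point $(\links f\;1_{Y}\rechts,\iota_{Y})$ onto an arbitrary point $(f,s)$ over $Y$, then invoking Proposition~\ref{stably strong points closed under quotients}. Your observation that $\links 1_{A}\;s\rechts$ is split by $\iota_{A}$, hence a regular epimorphism, is precisely what makes that quotient-closure result applicable.
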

\begin{proof}
This follows from Proposition~\ref{stably strong points closed
under quotients} applied to the morphism of points
\[
\xymatrix@!0@=5em{ X+Y \ar@{->>}[r]^-{\links 1_{X}\; s\rechts}
\ar@<-.5ex>[d]_-{\links f\; 1_{Y}\rechts} & X
\ar@<-.5ex>[d]_f \\
Y \ar@{=}[r] \ar@<-.5ex>[u]_{\iota_{Y}} & Y, \ar@<-.5ex>[u]_s
 }
\]
for any given point $(f\colon X\to Y,s\colon Y\to X)$.
\end{proof}

\subsection{$\P(\C)$ is a protomodular core}
Similarly to what we did for Mal'tsev objects, we now show that
the subcategory $\P(\C)$ of protomodular objects is a protomodular
core with respect to a suitable class $\p$ of points, provided
that $\P(\C)$ is closed under finite limits in $\C$.

Let $\C$ be a finitely complete category such that $\P(\C)$ is
closed under finite limits. We define the class $\p$ in the
following way: a point $(f,s)$ belongs to~$\p$ if and only if it
is the pullback
\begin{equation*}
\label{diagram for PP} \vcenter{
\xymatrix@!0@=4em{ A \ophalfsplitpullback \ar[r] \ar@<-.5ex>[d]_-{f} & A' \ar@<-.5ex>[d]_{f'} \\
 X \ar[r] \ar@<-.5ex>[u]_-{s} & X' \ar@<-.5ex>[u]_{s'}
}}
\end{equation*}
of some point $(f',s')$ in $\P(\C)$. Note that $\p$ is a class of
strong points, since they are pullbacks of stably strong points
(the codomain $X'$ is a protomodular object). The class $\p$ is
also a pullback-stable class since any pullback of a point $(f,s)$
in~$\p$ is also a pullback of a point in $\P(\C)$. The class $\p$
is not closed under finite limits in~$\Pt(\C)$, in general. So,
strictly speaking, it does not give rise to an $\s$-protomodular
category. However, as we observed for the Mal'tsev case, the fact
(which follows immediately from the definition of $\p$) that all
points in $\P(\C)$ belong to $\p$ allows us to apply the same
arguments as in the proof of Proposition~6.2 in~\cite{S-proto}
(and its generalisation to the non-pointed case, given in
\cite{Bourn2014}) to conclude that $\P(\C)$ is a protomodular
category. Indeed, as we now show, it is the protomodular core
$\p(\C)$ of $\C$ relative to the class of points $\p$. In other
words, it is the category of $\p$-special objects of $\C$.

\begin{proposition}
\label{proto objs=proto core} If $\C$ is a pointed finitely
complete category, and the subcategory $\P(\C)$ of protomodular
objects is closed under finite limits in $\C$, then it coincides
with the protomodular core $\p(\C)$ consisting of the $\p$-special
objects of~$\C$.
\end{proposition}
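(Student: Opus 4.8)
The plan is to follow the template already established for Mal'tsev objects in Proposition~\ref{Mal'tsev objs = Mal'tsev core}, proving the two inclusions $\P(\C)\subseteq\p(\C)$ and $\p(\C)\subseteq\P(\C)$ separately. In both directions the work reduces to recognising $X$ as a kernel and then invoking the standing hypothesis that $\P(\C)$ is closed under finite limits.

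For the inclusion $\P(\C)\subseteq\p(\C)$, I would start from a protomodular object $X$ and show it is $\p$-special, i.e.\ that the point $(\pi_1\colon X\times X\to X,\Delta_X)$ lies in $\p$. Since $\P(\C)$ is closed under finite limits and $X$ is protomodular, the binary product $X\times X$ is again protomodular, so both the domain and the codomain of $(\pi_1,\Delta_X)$ are objects of $\P(\C)$; hence $(\pi_1,\Delta_X)$ is itself a point in $\P(\C)$. As observed just before the statement, every point in $\P(\C)$ belongs to $\p$ (each such point is the pullback of itself along the identity), so $(\pi_1,\Delta_X)\in\p$ and $X$ is $\p$-special.

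For the reverse inclusion $\p(\C)\subseteq\P(\C)$, I would take a $\p$-special object $X$ and unwind the definition of $\p$: the point $(\pi_1,\Delta_X)$ is the pullback of some point $(f',s')$ in $\P(\C)$, with $f'\colon A'\to X'$, so that $A'$ and $X'$ are protomodular and there is a pullback square having $\pi_1$ on the left and $f'$ on the right. The key step is to identify $X$ as a kernel: the kernel of $\pi_1$ is $X$ itself, embedded by $\langle 0,1_X \rangle$, and because a kernel is the pullback along the zero map and the given square is a pullback, the pasting lemma yields $\ker(\pi_1)\cong\ker(f')$, whence $X\cong\ker(f')$. Now $\ker(f')$ is the pullback of $f'$ along $0\to X'$, i.e.\ a finite limit formed from the protomodular objects $A'$ and $X'$ together with the zero object (which is protomodular, being the terminal object and hence a finite limit); closure of $\P(\C)$ under finite limits therefore gives $\ker(f')\in\P(\C)$, so $X$ is protomodular.

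I expect the argument to be essentially routine, since it mirrors the Mal'tsev case verbatim. The only point requiring a little care is the kernel transfer in the second inclusion: one must use that membership in $\p$ is witnessed by a pullback of the \emph{underlying split epimorphisms} (a cartesian morphism in the fibration of points), so that the two parallel kernels genuinely coincide, and one must record that the zero object belongs to $\P(\C)$, ensuring that $\ker(f')$ is a legitimate finite limit of protomodular objects to which the closure hypothesis applies.
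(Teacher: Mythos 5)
Your proof is correct and takes essentially the same route as the paper's: the first inclusion uses closure under binary products so that $(\pi_1,\Delta_X)$ is a point in $\P(\C)$, hence in $\p$, and the converse identifies $X=\ker(\pi_1)\cong\ker(f')$ via the defining pullback square, concluding by closure under finite limits. Your additional care---recording that the zero object lies in $\P(\C)$ as the empty limit, and making the pasting-of-pullbacks step explicit---merely spells out what the paper's terser proof leaves implicit.
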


\begin{proof}
If $X$ is a protomodular object, it is obviously $\p$-special,
since the point
\begin{equation*}
(\pi_{1}\colon{X\times X\to X},\quad \Delta_{X}=\langle
1_{X},1_{X} \rangle\colon {X\to X\times X})
\end{equation*}
belongs to the subcategory $\P(\C)$, which is closed under binary
products.

Conversely, suppose that $X$ is $\p$-special. Then the point
$(\pi_1,\Delta_X)$ is a pullback of a point $(f',s')$ in $\P(\C)$
\[
\xymatrix@!0@=5em{ X\times X \ophalfsplitpullback \ar[r]^-{h'} \ar@<-.5ex>[d]_{\pi_1} & A' \ar@<-.5ex>[d]_{f'} \\
 X \ar@<-.5ex>[u]_(.4){\langle 1_{X},1_{X} \rangle} \ar[r]_-h & B'. \ar@<-.5ex>[u]_{s'} }
\]
But then $X$, which is the kernel of~$\pi_2$, is also the kernel
of $f'$, and hence it belongs to $\P(\C)$.
\end{proof}

\section*{Acknowledgements}
We are grateful to Dominique Bourn for proposing the problem that
led to the present paper. We would also like to thank Alan Cigoli
and Xabier Garc\'ia-Mart\'inez for fruitful discussions, and the referee for careful  comments and suggestions on the text.

%

\begin{thebibliography}{10}

\bibitem{Butterflies}
O.~Abbad, S.~Mantovani, G.~Metere, and E.~M. Vitale, \emph{Butterflies in a
  semi-abelian context}, Adv. Math. \textbf{238} (2013), 140--183.

\bibitem{Barr}
M.~Barr, \emph{Exact categories}, Exact categories and categories of sheaves,
  Lecture Notes in Math., vol. 236, Springer, 1971, pp.~1--120.

\bibitem{Borceux-Bourn}
F.~Borceux and D.~Bourn, \emph{Mal'cev, protomodular, homological and
  semi-abelian categories}, Math. Appl., vol. 566, Kluwer Acad. Publ., 2004.

\bibitem{Bourn1991}
D.~Bourn, \emph{Normalization equivalence, kernel equivalence and affine
  categories}, Category {T}heory, {P}roceedings {C}omo 1990 (A.~Carboni, M.~C.
  Pedicchio, and G.~Rosolini, eds.), Lecture Notes in Math., vol. 1488,
  Springer, 1991, pp.~43--62.

\bibitem{Bourn1996}
D.~Bourn, \emph{Mal'cev categories and fibration of pointed objects}, Appl.
  Categ. Structures \textbf{4} (1996), 307--327.

\bibitem{Bourn2002}
D.~Bourn, \emph{Intrinsic centrality and related classifying properties},
  J.~Algebra \textbf{256} (2002), 126--145.

\bibitem{Bourn2003}
D.~Bourn, \emph{The denormalized {$3\times 3$} lemma}, J.~Pure Appl. Algebra
  \textbf{177} (2003), 113--129.

\bibitem{Bourn-monad}
D.~Bourn, \emph{On the monad of internal groupoids}, Theory Appl. Categ.
  \textbf{28} (2013), 150--165.

\bibitem{Bourn2014}
D.~Bourn, \emph{Partial {M}al'tsevness and partial protomodularity}, preprint
  {\texttt{arXiv:1507.02886}}, 2015.

\bibitem{Bourn-quandles}
D.~Bourn, \emph{A structural aspect of the category of quandles}, J. Knot Theory
  Ramifications \textbf{24} (2015), 1550060, 35 pages.

\bibitem{Bourn-Gran-Normal-Sections}
D.~Bourn and M.~Gran, \emph{Normal sections and direct product decompositions},
  Comm.\ Algebra \textbf{32} (2004), no.~10, 3825--3842.

\bibitem{Bourn-Janelidze:Torsors}
D.~Bourn and G.~Janelidze, \emph{Extensions with abelian kernels in
  protomodular categories}, Georgian Math. J. \textbf{11} (2004), no.~4,
  645--654.

\bibitem{DB-ZJ-2009}
D.~Bourn and Z.~Janelidze, \emph{Subtractive categories and extended
  subtractions}, Appl.\ Categ.\ Structures \textbf{17} (2009), 317--343.

\bibitem{SchreierBook}
D.~Bourn, N.~Martins-Ferreira, A.~Montoli, and M.~Sobral, \emph{Schreier split
  epimorphisms in monoids and in semirings}, Textos de Matem{\'a}tica
  (S{\'e}rie~B), vol.~45, Departamento de Matem{\'a}tica da Universidade de
  Coimbra, 2013.

\bibitem{BM-FMS2}
D.~Bourn, N.~Martins-Ferreira, A.~Montoli, and M.~Sobral, \emph{Schreier split epimorphisms between monoids}, Semigroup Forum
  \textbf{88} (2014), 739--752.

\bibitem{S-proto}
D.~Bourn, N.~Martins-Ferreira, A.~Montoli, and M.~Sobral, \emph{Monoids and pointed {$S$}-protomodular categories}, Homology,
  Homotopy Appl. \textbf{18} (2016), no.~1, 151--172.

\bibitem{Carboni-Kelly-Pedicchio}
A.~Carboni, G.~M. Kelly, and M.~C. Pedicchio, \emph{Some remarks on {M}altsev
  and {G}oursat categories}, Appl. Categ. Structures \textbf{1} (1993),
  385--421.

\bibitem{CLP}
A.~Carboni, J.~Lambek, and M.~C. Pedicchio, \emph{Diagram chasing in {M}al'cev
  categories}, J.~Pure Appl. Algebra \textbf{69} (1991), 271--284.

\bibitem{CPP}
A.~Carboni, M.~C. Pedicchio, and N.~Pirovano, \emph{Internal graphs and
  internal groupoids in {M}al'cev categories}, Am. Math. Soc. for the Canad.
  Math. Soc., Providence, 1992, pp.~97--109.

\bibitem{CMM1}
A.~S. Cigoli, G.~Metere, and A.~Montoli, \emph{Obstruction theory in action
  accessible categories}, J. Algebra \textbf{385} (2013), 27--46.

\bibitem{EGVdL}
T.~Everaert, M.~Gran, and T.~Van~der Linden, \emph{Higher {H}opf formulae for
  homology via {G}alois {T}heory}, Adv.~Math. \textbf{217} (2008), no.~5,
  2231--2267.

\bibitem{GM-ACS}
X.~Garc{\'\i}a-Mart{\'\i}nez, \emph{A new characterisation of groups amongst
  monoids}, Appl.\ Categ.\ Structures, in press, 2016.

\bibitem{GM-VdL1}
X.~Garc{\'\i}a-Mart{\'\i}nez and T.~Van~der Linden, \emph{A note on split
  extensions of bialgebras}, Preprint {\texttt{arXiv:1701.00665}}, 2017.

\bibitem{Higgins}
P.~J. Higgins, \emph{Groups with multiple operators}, Proc. Lond. Math. Soc.
  (3) \textbf{6} (1956), no.~3, 366--416.

\bibitem{Huq}
S.~A. Huq, \emph{Commutator, nilpotency and solvability in categories}, Q. J.
  Math. \textbf{19} (1968), no.~2, 363--389.

\bibitem{Janelidze-Marki-Tholen}
G.~Janelidze, L.~M{\'a}rki, and W.~Tholen, \emph{Semi-abelian categories},
  J.~Pure Appl. Algebra \textbf{168} (2002), no.~2--3, 367--386.

\bibitem{ZJanelidze-Subtractive}
Z.~Janelidze, \emph{Subtractive categories}, Appl. Categ. Structures
  \textbf{13} (2005), 343--350.

\bibitem{ZJanelidze-Snake}
Z.~Janelidze, \emph{The pointed subobject functor, {$3\times 3$} lemmas, and
  subtractivity of spans}, Theory Appl. Categ. \textbf{23} (2010), no.~11,
  221--242.

\bibitem{MartinsMontoliSH}
N.~Martins-Ferreira and A.~Montoli, \emph{On the ``{S}mith is {H}uq'' condition
  in {$S$}-protomodular categories}, Appl.\ Categ.\ Structures \textbf{25}
  (2017), 59--75.

\bibitem{BM-FMS}
N.~Martins-Ferreira, A.~Montoli, and M.~Sobral, \emph{Semidirect products and
  crossed modules in monoids with operations}, J.~Pure Appl. Algebra
  \textbf{217} (2013), 334--347.

\bibitem{MartinsMontoliSobral2}
N.~Martins-Ferreira, A.~Montoli, and M.~Sobral, \emph{Semidirect products and split short five lemma in normal
  categories}, Appl. Categ. Structures \textbf{22} (2014), 687--697.

\bibitem{MRVdL4}
A.~Montoli, D.~Rodelo, and T.~Van~der Linden, \emph{A criterion for
  reflectiveness of normal extensions}, Bull. Belg. Math. Soc. Simon Stevin
  \textbf{23} (2016), no.~5, 667--691.

\bibitem{Pedicchio}
M.~C. Pedicchio, \emph{A categorical approach to commutator theory}, J.~Algebra
  \textbf{177} (1995), 647--657.

\bibitem{RVdL2}
D.~Rodelo and T.~Van~der Linden, \emph{Higher central extensions and
  cohomology}, Adv. Math. \textbf{287} (2016), 31--108.

\bibitem{Smith}
J.~D.~H. Smith, \emph{{M}al'cev varieties}, Lecture Notes in Math., vol. 554,
  Springer, 1976.

\bibitem{Ursini3}
A.~Ursini, \emph{On subtractive varieties {I}}, Algebra Universalis \textbf{31}
  (1994), 204--222.

\end{thebibliography}
%

\providecommand{\noopsort}[1]{}
\providecommand{\bysame}{\leavevmode\hbox to3em{\hrulefill}\thinspace}
\providecommand{\MR}{\relax\ifhmode\unskip\space\fi MR }
\providecommand{\MRhref}[2]{%
  \href{http://www.ams.org/mathscinet-getitem?mr=#1}{#2}
}
\providecommand{\href}[2]{#2}

\end{document}